\theoremstyle{plain}
\newtheorem{theorem}{Theorem}[section]
\newtheorem{corollary}[theorem]{Corollary}
\newtheorem{lemma}[theorem]{Lemma}
\newtheorem{proposition}[theorem]{Proposition}
\newtheorem{fact}[theorem]{Fact}
\newtheorem*{theorem*}{Theorem}
\newcounter{MainCorollaryCounter}
\newtheorem{MainCorollary}[MainCorollaryCounter]{Corollary}
\newcounter{MainTheoremCounter}
\newtheorem{MainTheorem}[MainTheoremCounter]{Theorem}
\theoremstyle{definition}
\newtheorem{definition}[theorem]{Definition}
\newtheorem*{setup}{Setup}
\newtheorem{example}[theorem]{Example}
\newtheorem*{conjecture*}{Conjecture}
\newcommand{\fieldF}{F}
\newcommand{\mut}{\mathbb{M}(U,\tau)}
\newcommand{\pstar}{P^*}
\newcommand{\modu}[1]{\overline{#1}}
\newcommand{\Ubar}{\modu{U}}
\newcommand{\Hbar}{\modu{H}}
\newcommand{\textdef}[1]{\textit{#1}}
\newcommand{\setdiff}{-}
\newcommand{\hp}{hereditarily proper}
\DeclareMathOperator{\rk}{rk}  
\DeclareMathOperator{\dc}{d} 
\DeclareMathOperator{\pr}{pr} 
\DeclareMathOperator{\mr}{m} 
\DeclareMathOperator{\symm}{Sym} 
\DeclareMathOperator{\gl}{GL} 
\DeclareMathOperator{\psl}{PSL} 
\DeclareMathOperator{\ssl}{SL}
\DeclareMathOperator{\aut}{Aut}
\DeclareMathOperator{\emorph}{End} 
\DeclareMathOperator{\Mouf}{\mathbb{M}}
\begin{document}
\title[Moufang sets of finite Morley rank of odd type]{Moufang sets of finite Morley rank of odd type}
\author{Joshua Wiscons}
\address{Mathematisches Institut\\
Universit\"at M\"unster\\
Einsteinstrasse 62\\
 48149 M\"unster, Germany}
\email{wiscons@math.uni-muenster.de}
\thanks{This material is based upon work supported by the National Science Foundation under grant No. OISE-1064446.}
\keywords{Moufang set, BN-pair, finite Morley rank}
\subjclass[2010]{Primary 20E42; Secondary 03C60}
\begin{abstract}
We show that for a wide class of groups of finite Morley rank the presence of a split $BN$-pair of Tits rank $1$ forces the group to be of the form $\psl_2$ and the $BN$-pair to be standard. Our approach is via the theory of Moufang sets. Specifically, we investigate infinite and so-called \hp{} Moufang sets of finite Morley rank in the case where the little projective group has no infinite elementary abelian $2$-subgroups and show that all such Moufang sets are standard (and thus associated to $\psl_2(\fieldF)$ for $\fieldF$ an algebraically closed field of characteristic not $2$) provided the Hua subgroups are nilpotent. Further, we prove that the same conclusion can be reached whenever the Hua subgroups are  $L$-groups and the root groups are not simple. 
\end{abstract}
\maketitle

\section{Introduction}

Moufang sets were introduced by Jacques Tits in \cite{TiJ92} and capture the essence of split $BN$-pairs of Tits rank $1$. The classification of the finite Moufang sets was completed by Christoph Hering, William Kantor, and Gary Seitz in \cite{HKS72}, and there is now an effort to better understand the infinite ones. 

Moufang sets fall into one of two cases, \textdef{proper} and \textdef{not proper}, based on the permutation group that they describe; a proper Moufang set is one for which the associated group is \textbf{not} sharply $2$-transitive. The structure of infinite sharply $2$-transitive groups is poorly understood, but those with abelian point-stabilizers, i.e. those associated to nonproper Moufang sets with abelian root groups, are easily shown to be of a single flavor: one-dimensional affine groups over a field. With an eye towards classifying the Moufang sets with abelian root groups,  the recent work on  Moufang sets is focused on understanding those that are proper. An important example of such a Moufang set is the one associated to $\psl_2(F)$ for $F$ a field, see Example~\ref{exam.MF} below. This Moufang set is denoted $\Mouf(\fieldF)$, and a similar construction produces a canonical Moufang set, denoted $\Mouf(J)$, for any quadratic Jordan division algebra $J$. It is conjectured that every infinite proper Moufang set with abelian root groups is of the form $\Mouf(J)$ for some quadratic Jordan division algebra $J$.  When considering the rather restrictive class of Moufang sets of finite Morley rank, this conjecture \emph{seems} to have a positive answer, and here, all of the Moufang sets \emph{appear} to arise, as expected, from a field, see \cite{DMeTe08}, \cite{WiJ10},  \cite{WiJ11}. We now investigate if it is possible for Moufang sets of finite Morley rank to have nonabelian root groups. This is conjectured not to happen.

\begin{conjecture*}[see {\cite[Question B.17]{BoNe94}}]
If $\Mouf$ is an infinite proper Moufang set of finite Morley rank, then $\Mouf \cong \Mouf(\fieldF)$ for $\fieldF$ an algebraically closed field, and, hence, the little projective group is isomorphic to $\psl_2(F)$.
\end{conjecture*}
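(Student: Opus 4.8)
The plan is to work inside the little projective group $G = \langle U, U^\tau\rangle$, which acts $2$-transitively on the underlying set $X$ and is interpretable as a group of finite Morley rank; all of the standard ingredients — the root groups $U_x$, the Hua subgroups $H_{x} = G_{x,y}$, and the permutation $\tau$ — are then definable. The first reductions are structural: passing to definable hulls and connected components, one shows that each root group $U$ is a connected definable nilpotent subgroup, that the Hua subgroup $H$ acts definably on $U$, and that the two-point stabilizer is sharply transitive on the remaining points. Since the target $\psl_2(\fieldF)$ is characterized by having $U \cong (\fieldF,+)$ abelian with $H$ acting as a quotient of $\fieldF^\times$ by scalar multiplication, the whole problem reduces to two tasks: (i) proving that $U$ is abelian, and (ii) interpreting a field $\fieldF$ from the action of $H$ on an abelian $U$ and recovering the relations of $\psl_2$.

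Given (i), task (ii) is comparatively mechanical. The faithful definable action of the connected Hua subgroup on the connected abelian root group yields, via the Zilber field theorem, an algebraically closed field $\fieldF$ with $U \cong \fieldF^+$ and $H$ embedding in $\fieldF^\times$; comparing ranks forces $H \cong \fieldF^\times$ and reconstructs the defining relations, giving $\Mouf \cong \Mouf(\fieldF)$. The characteristic is then tied to the $2$-structure of $G$: in the odd and degenerate regimes one obtains $\charac \fieldF \neq 2$, while $\charac \fieldF = 2$ corresponds precisely to the even type, where the root group $\fieldF^+$ is an infinite elementary abelian $2$-group. Thus the characteristic is pinned down in tandem with the case analysis needed for task (i), which is the real crux.

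To attack (i) I would organize the argument entirely by the $2$-structure of $G$, since the available tools depend heavily on type. In the odd and degenerate cases — equivalently, when $G$ has no infinite elementary abelian $2$-subgroup, matching the hypotheses used later in this paper — one can exploit toral Sylow $2$-subgroups together with genericity and torsion arguments to constrain the Hua subgroup and its action on $U$, and then derive a contradiction from the commutator structure $[U,U]$ when $U$ is assumed nonabelian. This is exactly where the auxiliary hypotheses (nilpotence of the Hua subgroups, the $L$-group condition, or non-simplicity of the root groups) are meant to be leveraged to force that contradiction. The even and mixed types are the genuinely resistant cases, because there the finite-Morley-rank analogue of unipotent analysis is far weaker and the Hua subgroup is harder to control.

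I expect the main obstacle to be precisely the even and mixed types with no restriction on the Hua subgroups: ruling out a nonabelian root group there appears to require either a full solution of the algebraicity (Cherlin--Zilber) conjecture for the simple group $G$, or a self-contained bound on the nilpotence class of $U$ extracted directly from the sharp transitivity of the two-point stabilizer — neither of which is presently available unconditionally. Consequently, my realistic program establishes the conjecture outright only under the additional hypotheses indicated above, and I would isolate the even/mixed-type nonabelian root group configuration as the decisive remaining gap standing between the partial results and the full statement.
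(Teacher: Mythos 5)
There is no ``paper's own proof'' to compare against here: the statement you were given is an open conjecture (Question B.17 of \cite{BoNe94}), and the paper proves it only in special cases --- Theorems~\ref{thm.A} and~\ref{thm.B}, which apply to \hp{} Moufang sets of odd type and require auxiliary hypotheses (nilpotent Hua subgroups, or $L$-group Hua subgroups together with nonsimple root groups). Your closing assessment --- that the full statement is out of reach and can realistically be obtained only under such extra hypotheses --- is therefore the correct one, and your broad program (reduce to abelian root groups, then interpret a field and invoke the abelian-case results of \cite{DMeTe08} and \cite{WiJ11}) is, in outline, what the paper actually does.

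That said, several steps you present as routine are not, and your case analysis misplaces the difficulty. First, you claim as a ``first reduction'' that each root group is nilpotent; this is itself an open conjecture (the paper cites \cite{DST08} for it), and no passage to definable hulls or connected components yields it. Similarly, sharp transitivity of the two-point stabilizer on $U^*$ is a feature of the conjectured answer $\Mouf(\fieldF)$, not something available at the outset; assuming it is close to assuming the conclusion. Second, the abelian case is far from ``comparatively mechanical'': Zil'ber's theorem does give a field, but identifying $H$ with $\fieldF^\times$ and recovering $\Mouf(\fieldF)$ is the substance of \cite{DMeTe08}, \cite{WiJ10} and \cite{WiJ11}, and in the elementary abelian case it is still known only under the $L$-group hypothesis --- which is exactly why Theorem~\ref{thm.B} carries that hypothesis. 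Third, the little projective group is connected and $2$-transitive, hence contains involutions, so it can never be of degenerate type (Fact~\ref{fact.BBC}), and it can never be of mixed type by the fact of \cite{BoCh08} quoted in the introduction; the only cases are odd and even. Moreover, the resistant open terrain is not confined to even type: within odd type, Theorem~\ref{thm.Dicotomy} leaves standing the configuration of simple root groups of degenerate type with Hua subgroups of Pr\"ufer $2$-rank $2$, which is precisely the exception in Theorem~\ref{thm.B}. So the decisive gap you should isolate includes odd type itself --- the very case your program describes as tractable.
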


Groups of finite Morley rank are groups equipped with a model-theoretic notion of dimension, and it was conjectured by Gregory Cherlin and Boris Zil'ber in the late 70's that every infinite simple group of finite Morley rank is in fact an algebraic group over an algebraically closed field. As every simple algebraic group over an algebraically closed field has a $BN$-pair, it is natural to try to understand the groups of finite Morley rank with a  $BN$-pair. In \cite{KTVM99}, Linus Kramer, Katrin Tent, and Hendrik van Maldeghem showed that the simple groups of finite Morley rank with a $BN$-pair of Tits rank at least $3$ are all algebraic. Additionally, there are partial results when the Tits rank is $2$; a survey of these may be found in \cite{TeK02}. Thus, we find ourselves curious about the case of Tits rank $1$, and in this way, Moufang sets of finite Morley rank fit naturally into the investigation of the Cherlin-Zil'ber  Conjecture.  

As the Sylow $2$-subgroups of a group of finite Morley rank are conjugate, the analysis of groups of finite Morley rank can be broken into four cases based on the structure of the connected component $S^\circ$ of a Sylow $2$-subgroup $S$. These cases, with names corresponding to the characteristic of a possible interpretable field, are

\begin{description}
\item[Degenerate] $S^\circ$ is trivial,
\item[Even] $S^\circ$ is nontrivial, nilpotent, and of bounded exponent ($S^\circ$ is $2$-unipotent),
\item[Odd] $S^\circ$ is nontrivial, divisible, and abelian ($S^\circ$ is a $2$-torus), and
\item[Mixed] $S^\circ$ contains a nontrivial $2$-unipotent subgroup and a nontrivial $2$-torus.
\end{description}

In \cite{ABC08}, Tuna Alt{\i}nel, Alexandre Borovik, and Gregory Cherlin proved that there are no infinite simple groups of finite Morley rank of mixed type and that those of even type are algebraic. Not too much is known about degenerate groups, but there is a growing body of knowledge about groups of odd type. We will focus on Moufang sets whose little projective group has odd type, and as the little projective group of an infinite Moufang set of finite Morley rank is always connected and $2$-transitive, the following fact shows that we are only excluding the even type case. 

\begin{fact}[{\cite[Lemma~5.8]{BoCh08}}]
Let $G$ be a definably primitive and generically $2$-transitive permutation group of finite Morley rank. Then $G^\circ$ is either of odd or of even type. 
\end{fact}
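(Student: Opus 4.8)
The plan is to reduce to the connected group $G^\circ$ and then to rule out the remaining two possibilities for its type, namely degenerate and mixed. First I would observe that definable primitivity pins down the action of $G^\circ$: since $G^\circ$ is a definable normal subgroup, its orbits form a definable $G$-invariant partition of the underlying set $X$, so by primitivity this partition is either all singletons or the whole of $X$; as $G^\circ$ is infinite and acts faithfully (the action being infinite and $2$-transitive), it cannot fix every point, so the partition is universal and $G^\circ$ is transitive. Generic $2$-transitivity passes to $G^\circ$ as well, because $G^\circ$ has finite index, so the unique generic $G$-orbit on $X \times X$ decomposes into finitely many $G^\circ$-orbits, exactly one of which is generic. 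Thus $G^\circ$ is itself a connected, transitive, generically $2$-transitive group, and it suffices to show that its connected Sylow $2$-subgroup $S^\circ$ is nontrivial (excluding degenerate type) and is not simultaneously $2$-unipotent and a nontrivial $2$-torus (excluding mixed type).

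The heart of the argument is the exclusion of degenerate type, for which I would produce an involution inside $G^\circ$. The key structural input is that the unique generic $G^\circ$-orbit $O$ on $X \times X$ is \emph{symmetric}: the definable swap $\tau \colon (x,y) \mapsto (y,x)$ preserves rank and hence genericity, so both $(a,b)$ and $(b,a)$ lie in $O$, and therefore there is $w \in G^\circ$ with $w\cdot a = b$ and $w\cdot b = a$ for a generic pair $(a,b)$. Such a $w$ interchanges $a$ and $b$, normalizes the two-point stabilizer $G_{a,b}$, and satisfies $w^2 \in G_{a,b}$. The hard part will be to upgrade this reversing element to a genuine involution of $G^\circ$: the swap $\tau$ centralizes the diagonal action of $G^\circ$ on $X \times X$, and the real problem is to internalize this external symmetry, i.e.\ to locate a true element of order $2$ in the coset $w\,G_{a,b}$ (equivalently, an involution interchanging $a$ and $b$) rather than merely an element that is involutory modulo $G_{a,b}$. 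Once an involution $\iota \in G^\circ$ is in hand, degenerate type is excluded by the theorem that a connected group of finite Morley rank of degenerate type contains no involutions at all; thus $S^\circ \neq 1$.

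Finally, to rule out mixed type I would argue that a connected generically $2$-transitive group cannot have $S^\circ$ of mixed form. Here I would lean on the structure theory of mixed-type groups together with \cite{ABC08}: a connected group of mixed type is never simple and carries a proper nontrivial definable normal subgroup, while definable primitivity forces every such subgroup to act transitively on $X$; combining this with the rank and orbit constraints imposed by generic $2$-transitivity should isolate an even- or odd-type simple section whose existence contradicts the mixed hypothesis via the nonexistence of simple mixed-type groups. With both degenerate and mixed excluded, $S^\circ$ is either a nontrivial $2$-unipotent group or a nontrivial $2$-torus, i.e.\ $G^\circ$ is of even or of odd type. I expect the genuine obstacle to lie in the degenerate case — specifically in manufacturing an actual involution from the abstract symmetry of the generic pair-orbit — whereas the reductions and the mixed case should follow from primitivity and known structural results once the combinatorics of the generic orbits are in place.
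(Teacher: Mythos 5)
This statement is quoted verbatim from \cite[Lemma~5.8]{BoCh08}; the paper gives no proof of its own, so there is nothing internal to compare your argument against, and it has to be judged on its own merits. As it stands it is an outline with the decisive steps missing rather than a proof. The central gap is the one you yourself flag: producing a genuine involution in $G^\circ$ from the element $w$ that swaps a generic pair. Knowing that $w$ normalizes $G_{a,b}$ and that $w^2\in G_{a,b}$ does not by itself yield an involution in the coset $wG_{a,b}$ --- already in $\mathbb{Z}/4\mathbb{Z}$ the nontrivial coset of the order-$2$ subgroup consists of elements of order $4$ --- so some finite Morley rank input (e.g.\ passing to the definable hull $\dc(\langle w\rangle)$ and using the decomposition of a definable abelian group into a divisible part plus a part of bounded exponent to extract a nontrivial $2$-element, whence an involution) is indispensable, and you do not supply it. The mixed-type exclusion is likewise only gestured at (``should isolate \dots{} via \dots''), with no identified normal subgroup or section to which the nonexistence of simple mixed-type groups actually applies.

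There is also a gap earlier than the one you acknowledge. Your reduction asserts that the generic $G$-orbit on $X\times X$ contains \emph{exactly one} generic $G^\circ$-orbit; in fact it splits into finitely many $G^\circ$-orbits that are permuted transitively by $G/G^\circ$ and hence all have the same (full) rank. The swap $(x,y)\mapsto(y,x)$ permutes these finitely many orbits and need not fix any of them, so the existence of an element of $G^\circ$ (as opposed to $G$) interchanging $a$ and $b$ is not established; and an involution lying in $G\setminus G^\circ$ says nothing about the type of $G^\circ$. So both the ``easy'' reduction and the two hard steps need genuine additional arguments before this becomes a proof.
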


Our main reference for groups of finite Morley rank will be  \cite{ABC08}; \cite{PoB87} and \cite{BoNe94} provide excellent introductions to the theory as well.

We now give the central definitions; the main results will follow.

\begin{definition}
For a set $X$ with $|X| \ge 3$ and a collection of groups $\{U_x : x\in X\}$ with each $U_x \le \mbox{Sym}(X)$, we say that $(X,\{U_x : x\in X\})$ is a \textdef{Moufang set} if for $G := \langle U_x : x\in X \rangle$ the following conditions hold:
\begin{enumerate}
\item each $U_x$ fixes $x$ and acts regularly on $X\setdiff \{x\}$,
\item $\{U_x : x\in X\}$ is a conjugacy class of subgroups in $G$.
\end{enumerate}
We call $G$ the \textdef{little projective group} of the Moufang set, and each $U_x$ for $x\in X$ is called a \textdef{root group}. The Moufang set is \textdef{proper} if the action of the little projective group on $X$ is \emph{not} sharply $2$-transitive. The $2$-point stabilizers are called the \textdef{Hua subgroups}. 
\end{definition}

Note that the little projective group of a Moufang set always acts $2$-transitively on $X$. Thus, the Hua subgroups are all conjugate as are the $1$-point stabilizers and the root groups.

\begin{definition}
We will say that a Moufang set $(X,\{U_x : x\in X\})$ with little projective group $G$ is \textdef{interpretable} in a structure if the root groups, $X$, $G$, and the action of $G$ on $X$ are all interpretable in the structure. Now we define a \textdef{Moufang set of finite Morley rank} to be a Moufang set interpretable in a structure of finite Morley rank. Finally, we say that a Moufang set of finite Morley rank has \textdef{odd type}, respectively \textdef{even type}, if the little projective group has odd type, respectively even type.
\end{definition}

It should be noted that in the previous works on Moufang sets of finite Morley rank the root groups were not assumed to be interpretable. However, all of the papers addressed the case of abelian root groups where one can easily show that the interpretability of $X$, $G$, and the action of $G$ on $X$ forces the root groups to be interpretable as well. Also, the interpretability of $X$ and the action of $G$ on $X$ follows from the interpretability of $G$ and the root groups and thus could be omitted from the definition. 

We have two main results. The first treats Moufang sets whose Hua subgroups are nilpotent, allowing the root groups to have an arbitrary structure. The second result is complementary. It addresses Moufang sets with nonsimple root groups, this time placing few restrictions on the Hua subgroup. The fact that neither theorem assumes the  Moufang sets to be special (as defined in Section~\ref{sec.MS}) is noteworthy as much of the theory of infinite Moufang sets is built around this hypothesis. 

\begin{MainTheorem}\label{thm.A}
Let $\Mouf$ be an infinite \hp{} Moufang set of finite Morley rank of odd type. If the Hua subgroups are nilpotent, then $\Mouf \cong \Mouf(\fieldF)$ for $\fieldF$ an algebraically closed field. 
\end{MainTheorem}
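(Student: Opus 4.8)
The plan is to pin down the root group and the Hua subgroup tightly enough to recover a field, and then to feed the result into the known analysis of Moufang sets of finite Morley rank with abelian root groups. Fix two points $0,\infty\in X$, write $U=U_\infty$ and $V=U_0$ for the two root groups, let $H$ be the Hua subgroup fixing $0$ and $\infty$, and let $G=\langle U,V\rangle$ be the little projective group. By hypothesis $G$ is connected, of finite Morley rank, of odd type, and acts $2$-transitively on $X$; standard facts about such Moufang sets make $U$, $V$, and $H$ definable and make each root group connected. Since the target $\Mouf(\fieldF)$ has abelian root groups, the true content of the theorem is to force $U$ to be abelian; the field will then come out of the abelian theory, and it will be algebraically closed of characteristic not $2$ for free (an infinite field of finite Morley rank is algebraically closed by Macintyre's theorem, and odd type rules out characteristic $2$).

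First I would produce a useful involution. Because $G$ has odd type, a Sylow $2$-subgroup has a nontrivial $2$-torus as its connected part, and $2$-tori enjoy fixed points in definable actions of finite Morley rank; choosing the base points $0,\infty$ to be fixed by such a torus $T$ places $T\le H^\circ$, exactly as the diagonal torus of $\psl_2$ lies in the stabilizer of two points. Now I use the hypothesis that the Hua subgroups are nilpotent: in a connected nilpotent group of finite Morley rank the torsion is central, so $T\le Z(H^\circ)$ and in particular we obtain an involution $\omega\in H^\circ$ whose conjugation action on $U$ is an automorphism commuting with the action of all of $H^\circ$. The model $\Mouf(\fieldF)$ shows what to aim for: there $\omega$ is the map $z\mapsto -z$, which inverts $U\cong\fieldF^+$. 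So the goal becomes to prove that $\omega$ inverts $U$, i.e.\ that $C_U(\omega)=1$.

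This inversion step is the heart of the matter and the main obstacle. A priori $\omega$ only splits $U$ into a fixed part $C_U(\omega)$ and an inverted part, and one must kill the former. The leverage is that any nontrivial $a\in C_U(\omega)$ produces a third point of $X$ fixed by $\omega$, so $C_U(\omega)$ is visible inside the fixed structure $C_X(\omega)$ on which $C_G(\omega)$ acts, and this fixed structure inherits the shape of a Moufang set of strictly smaller Morley rank. It is precisely here that \emph{hereditary} properness is meant to be spent: it keeps the induced substructure proper, so that an induction on Morley rank, combined with the odd-type torsion and a genericity/degree count, applies. I expect the argument to show that a nontrivial $C_U(\omega)$ would either violate properness of the induced structure or manufacture a $2$-subgroup incompatible with odd type, forcing $C_U(\omega)=1$. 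An automorphism inverting a group forces that group to be abelian, so $U$ becomes abelian; moreover such an inverting involution in $H$ is closely tied to the Moufang set being \emph{special}, and I would verify specialness at the same time.

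With $U$ abelian (and the Moufang set special), the last task is to build the field. The commuting actions of $H^\circ$ and of the $\mu$-maps on the connected abelian group $U$ equip $U$ with a module structure over a ring of definable endomorphisms; since everything is interpretable of finite Morley rank and the action is that of a division structure, a Schur-type/zero-divisor analysis collapses this to the additive group of a quadratic Jordan division algebra that is itself of finite Morley rank, hence to an interpretable field $\fieldF$, necessarily infinite and thus algebraically closed, with $U\cong\fieldF^+$ and $H$ acting as $\fieldF^*$. At that point $\Mouf$ is a special Moufang set of finite Morley rank with abelian root groups, so the established identification of such Moufang sets (see \cite{DMeTe08}, \cite{WiJ10}, \cite{WiJ11}) yields $\Mouf\cong\Mouf(\fieldF)$ with little projective group $\psl_2(\fieldF)$, completing the proof. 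The step I expect to be genuinely hard is the inversion in the third paragraph; everything after it is a matter of assembling known machinery.
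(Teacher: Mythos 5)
Your overall architecture (get an involution into $H$, force $U$ abelian, then quote the abelian-root-group classification) matches the paper's in outline, but the step you yourself flag as ``the heart of the matter'' --- proving that the central involution $\omega$ inverts $U$ --- is left as ``I expect the argument to show\dots'', and the mechanism you propose for it points in the wrong direction. If $U$ is nonabelian, then by the standard fact on involutory automorphisms of connected groups of finite Morley rank (quoted in the paper as Fact~\ref{fact.fixedPointsInvolution}: finitely many fixed points forces the group abelian and the automorphism to be inversion), \emph{every} involution of $H$ automatically has \emph{infinite} centralizer in $U$. So $C_U(\omega)=1$ is not something you can hope to establish directly; in the nonabelian case you are instead handed an infinite proper definable $H$-invariant root subgroup $C_U(\omega)$, and neither of your proposed contradictions materializes: hereditary properness is a hypothesis guaranteeing the induced Moufang set on $C_U(\omega)$ is proper (it cannot be ``violated'' by it), and no forbidden $2$-subgroup appears. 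The logical order is the reverse of what you sketch: one first proves $U$ abelian by other means, and only then does the involution invert $U$.

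The paper closes this gap with machinery your sketch does not contain. It passes to a minimal counterexample, which is a proper $\pstar$-Moufang set (Lemma~\ref{lem.pstarExists}), so that every infinite proper definable root subgroup --- in particular $C_U(\omega)$ --- is projective with $H(C_U(\omega))\cong \fieldF^\times$ and $N_H(C_U(\omega))=H(C_U(\omega))*C_H(C_U(\omega))$ (Proposition~\ref{prop.NHV}). The decisive input is then Proposition~\ref{prop.Prufer}: if $U$ is nonabelian, $\pr_2H=\mr_2H=2$. Its proof, and the proof of Corollary~\ref{cor.NilpotentHua}, hinge on having a Klein four-group $K\le H$ and the generation theorem $U=\langle C_U(x):x\in K^*\rangle$ (Fact~\ref{fact.FourGroup}); a single involution is structurally insufficient. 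Nilpotency of $H$ then enters only at the very end: the maximal $2$-torus of a nilpotent $H$ is central, and $H$ has at most one central involution, so $\pr_2H\le 1$, contradicting Proposition~\ref{prop.Prufer}; hence $U$ is abelian and Fact~\ref{fact.Dichotomy} together with the cited abelian classification finishes. Two smaller issues: your claim that a $2$-torus must fix the two base points (to get $T\le H$) is not justified --- the paper obtains odd type for $H$ via projective root subgroups (Corollary~\ref{cor.UHstruct}), not via a fixed-point argument --- and ``torsion is central in a connected nilpotent group of finite Morley rank'' is false as stated (only the divisible torsion, e.g.\ the maximal $2$-torus, is central), though the weaker statement suffices.
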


The property of being \hp{} is defined in Section~\ref{sec.PStar} and addresses the issue that in proper, but not necessarily special, Moufang sets one may encounter root subgroups for which the induced Moufang set is no longer proper. This is conjectured not to happen outside of a few known small examples, see \cite[Conjecture 2]{SeY09}. We make the observation in Corollary~\ref{cor.HeredProper} that a Moufang set whose little projective group has odd type is automatically \hp{} if the root groups are without involutions. 

Our second theorem requires another definition. A group of finite Morley rank is an \textdef{$L$-group} if \emph{every} infinite definable simple section of odd type is isomorphic to an algebraic group over an algebraically closed field; by \textdef{definable section} we mean a quotient of a definable subgroup by one of its definable normal subgroups. The study of $L$-groups (and $L^*$-groups) is aimed at a classification of the simple groups of odd type that does not rely on knowledge of degenerate type sections.

\begin{MainTheorem}\label{thm.B}
Let $\Mouf$ be an infinite \hp{} Moufang set of finite Morley rank of odd type. If the Hua subgroups are $L$-groups, then $\Mouf \cong \Mouf(\fieldF)$ for $\fieldF$ an algebraically closed field \textbf{unless} both of the following occur:
\begin{itemize}
\item the root groups are simple of degenerate type, and 
\item the Hua subgroups have Pr\"ufer $2$-rank at least $2$.
\end{itemize}
\end{MainTheorem}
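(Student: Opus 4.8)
The plan is to reduce the statement to the case of abelian root groups, for which $\Mouf \cong \Mouf(\fieldF)$ is already available for \hp{} Moufang sets of finite Morley rank of odd type. Two quick reductions set the stage. First, if the Hua subgroup $H$ is nilpotent, then Theorem~\ref{thm.A} applies verbatim, so we may assume $H$ is a \emph{non}-nilpotent $L$-group. By the structure theory of $L$-groups of odd type, such an $H$ contains an infinite definable simple section of odd type that is an algebraic group over an algebraically closed field; in particular its $2$-torus contributes to the Pr\"ufer $2$-rank of $H$. The whole argument then turns on how this semisimple part of $H$ acts, by conjugation, on the root group $U$, played against the regular action of $U$ on $X \setdiff \{x\}$.

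The key dichotomy is whether $U$ is simple, and the target is to show that if $U$ is \emph{not} simple then $U$ is abelian, returning us to the known case. I would argue by induction on $\rk U$. If $U$ carries a proper nontrivial infinite definable normal subgroup $V$, then the \hp{} hypothesis guarantees that $(V, V^\tau)$ generates a proper sub-Moufang set, again of odd type, whose root group is $V$ and whose Hua subgroup is a definable section of $H$ and hence again an $L$-group. Since $\rk V < \rk U$, the inductive hypothesis applies to this sub-Moufang set. The delicate part is to rule out the possibility that the sub-Moufang set itself falls into the exceptional configuration: one must show that a \emph{simple} proper normal subgroup of $U$ cannot occur, for instance because the centraliser and normaliser constraints coming from the regular action of $U$ are incompatible with $U$ properly containing a simple definable normal subgroup. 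Once every proper normal subgroup is forced to be abelian, the aim is to propagate abelianness through the action of $H$ and conclude that $U$ itself is abelian.

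It remains to analyse a simple, non-abelian root group $U$, where I would pin down its type. Even type is impossible, since an infinite $2$-unipotent subgroup of $U \le G$ would force the little projective group to be of even or mixed type rather than odd. Odd type is excluded by a fixed-point count: an involution $j$ in a $2$-torus of $U$ lies in the point stabiliser $G_x$, and since $U$ acts regularly on $X \setdiff \{x\}$ the involution $j$ fixes \emph{exactly one} point of $X$; this is incompatible with the two-fixed-point behaviour of involutions in a proper Moufang set of odd type, and is precisely the obstruction that the \hp{} hypothesis (via Corollary~\ref{cor.HeredProper}) is designed to control. Hence $U$ is simple of degenerate type. Finally I would bound the Pr\"ufer $2$-rank of $H$: if it equalled $1$, then $G$ would be a connected simple group of odd type and Pr\"ufer $2$-rank $1$ whose involution centralisers are, up to the Weyl involution, the $L$-group Hua subgroups, and the recognition of $\psl_2$ in Pr\"ufer $2$-rank $1$ would give $G \cong \psl_2(\fieldF)$, forcing $U$ to be abelian and contradicting its simplicity. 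Therefore $H$ has Pr\"ufer $2$-rank at least $2$, and we have landed in exactly the two excluded bullet points.

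The principal obstacle is the final configuration itself. A non-abelian \emph{simple} group of \emph{degenerate} type is exactly the kind of object whose existence is open in the finite Morley rank programme, so there is no structural handle with which to analyse such a root group; and once the Pr\"ufer $2$-rank of $H$ is at least $2$ one loses the Pr\"ufer-rank-one recognition theorem that rescues the remaining cases. The two most technically demanding points along the way are the propagation of abelianness from the proper normal subgroups of $U$ up to $U$ in the non-simple case (including the exclusion of a simple proper normal subgroup), and the verification that the recognition of $\psl_2(\fieldF)$ in Pr\"ufer $2$-rank $1$ is genuinely available under only the $L$-group hypothesis on the Hua subgroups.
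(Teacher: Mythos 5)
Your high-level target is right, but three load-bearing steps would fail as stated. First, your induction hinges on the claim that a proper nontrivial definable normal subgroup $V$ of $U$ generates a sub-Moufang set with root group $V$. That is not true: a sub-Moufang set requires a \emph{root subgroup}, i.e.\ a subgroup $V$ with $V^*\mu_v = V^*$ for some $v\in V^*$, and an arbitrary normal (or even $H$-invariant) subgroup of $U$ need not be one. The paper's proof never inducts on $A$ itself; it intersects the $H$-invariant normal subgroup $A$ with the centralizers $V_x = C_U(x)$ of commuting involutions $x\in\langle i,j\rangle$ --- these \emph{are} root subgroups --- and applies minimality to the Moufang sets induced by the $V_x$, using Fact~\ref{fact.FourGroup} and Fact~\ref{fact.decompositionInvolution} to force $U/A$ and $A$ both abelian. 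Second, your endgame ``every proper normal subgroup abelian $\Rightarrow$ $U$ abelian'' does not follow; the paper instead concludes that $U$ is \emph{solvable} (abelian-by-abelian) and contradicts a previously established nonsolvability of $U$, which is exactly why the preparatory lemma isolates the solvable case first.

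Third, and most seriously, your derivation of $\pr_2 H \ge 2$ in the exceptional case rests on a recognition theorem for connected simple odd type groups of Pr\"ufer $2$-rank $1$. No such theorem is available under the hypotheses here: the $L$-condition is imposed only on the Hua subgroups (the $2$-point stabilizers), not on $G$, and even with strong inductive hypotheses the Pr\"ufer-rank-one case is precisely where the odd type analysis is hardest. The paper gets this bound by entirely different means: it first passes, via Lemma~\ref{lem.pstarExists}, to a minimal nonprojective root subgroup, which induces a proper $\pstar$-Moufang set; in that setting every infinite proper definable root subgroup is projective, and Proposition~\ref{prop.Prufer} extracts two commuting $2$-tori from the subgroups $H(V)$ and $C_H^\circ(V)$ attached to $V = C_U(i)$. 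Your proposal never makes this reduction to the $\pstar$ setting, so none of the structural machinery (Proposition~\ref{prop.NHV}, Theorem~\ref{thm.Dicotomy}) is available to you. A smaller slip: a non-nilpotent $L$-group need not contain any infinite simple section (it could be solvable), so your opening appeal to ``the semisimple part of $H$'' has no content. Your identification of the degenerate type of a simple $U$ is essentially correct, though the clean route is Corollary~\ref{cor.UHstruct} (no involutions in $U$ at all) rather than a fixed-point count.
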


The conclusion of the theorem states that the root groups are either very nice, i.e. isomorphic to the additive group of a field, or very wild, i.e.  simple degenerate type groups with many nontrivial involutory automorphisms. In the context of Moufang sets, the latter alternative seems exceptionally wild as it has been conjectured that the root groups in \emph{any} Moufang set should be nilpotent, see \cite{DST08}.

Let us also mention that the hypothesis that the Hua subgroups be $L$-groups is only needed when addressing the case of abelian root groups. That is, our approach to Theorem~\ref{thm.B} is to show that, in a minimal counterexample, avoiding either of the final two configurations implies that the root groups are abelian; we then appeal to the results of \cite{DMeTe08} and \cite{WiJ11} where the latter utilizes the $L$-hypothesis. 

Theorem~\ref{thm.B} immediately yields the following corollary on $K^*$-groups; a group of finite Morley rank is a \textdef{$K^*$-group} if  \emph{every proper} infinite definable simple section (regardless of type) is isomorphic to an algebraic group over an algebraically closed field.

\begin{MainCorollary}
Let $G$ be a simple odd type $K^*$-group of finite Morley rank. If $(B,N,U)$ is a split $BN$-pair of Tits rank $1$ for  $G$ with $U$ definable and without involutions, then $G$ is isomorphic to $\psl_2(F)$ for $\fieldF$ an algebraically closed field, and $(B,N,U)$ is the standard $BN$-pair for $G$.
\end{MainCorollary}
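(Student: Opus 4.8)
The plan is to realize the split $BN$-pair as a Moufang set and then invoke Theorem~\ref{thm.B}, using the $K^*$-hypothesis both to verify its assumptions and to eliminate the exceptional configuration. First I would recall the standard correspondence (due to Tits) between split $BN$-pairs of Tits rank $1$ and Moufang sets: taking $X = G/B$, the conjugates of $U$ serve as the root groups, each acting regularly on its complement by virtue of the splitting $B = U \rtimes H$ with $H = B \cap N$, and the two Moufang-set axioms hold. The little projective group is $\langle U^G\rangle$, which is normal and nontrivial in the simple group $G$, hence equals $G$; thus the Moufang set has odd type, and since odd type forces a nontrivial $2$-torus, $G$ and the Moufang set are infinite. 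Because $U$ is definable, so is $B = N_G(U)$, whence $X$, the action, and the root groups are all interpretable and the Moufang set is of finite Morley rank. As the root groups are conjugates of $U$ they contain no involutions, so Corollary~\ref{cor.HeredProper} applies and the Moufang set is \hp{} (in particular proper).

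Next I would check that the Hua subgroups are $L$-groups. A Hua subgroup is the two-point stabilizer $H = B \cap N$, which is definable in the definable action (an intersection of conjugates of the definable $B$) and proper in $G$. Every infinite definable simple section of $H$ is then a proper infinite definable simple section of $G$, hence algebraic over an algebraically closed field by the $K^*$-hypothesis; in particular every such section of odd type is algebraic, so $H$ is an $L$-group. All the hypotheses of Theorem~\ref{thm.B} are now in place, and it yields $\Mouf \cong \Mouf(\fieldF)$ for $\fieldF$ algebraically closed unless the root groups are simple of degenerate type and the Hua subgroups have Pr\"ufer $2$-rank at least $2$.

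To rule out the exceptional case it suffices to contradict its first clause. Were a root group simple, then, being an infinite proper definable subgroup of $G$, it would be a proper infinite definable simple section of $G$, hence algebraic over an algebraically closed field by the $K^*$-hypothesis. But an infinite simple algebraic group over an algebraically closed field is never of degenerate type (it has a nontrivial Sylow $2$-subgroup), so the root groups cannot be simple of degenerate type. Thus the exceptional case does not occur and $\Mouf \cong \Mouf(\fieldF)$; consequently $G$, the little projective group, is isomorphic to $\psl_2(\fieldF)$, and odd type forces $\charac \fieldF \ne 2$.

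Finally I would identify $(B,N,U)$ as the standard $BN$-pair. Under the isomorphism $\Mouf \cong \Mouf(\fieldF)$ the $2$-transitive action becomes the natural action of $\psl_2(\fieldF)$ on the projective line, so the point stabilizer $B = N_G(U)$ is a Borel subgroup, the root group $U$ is the corresponding maximal unipotent subgroup, the Hua subgroup $H = B \cap N$ is a maximal torus, and $N$---generated by $H$ together with an element interchanging the two base points---is its normalizer; these are exactly the subgroups of the standard split $BN$-pair. The main points requiring care are the verification that every object in sight is interpretable, so that Theorem~\ref{thm.B} genuinely applies, and this last bookkeeping matching the abstract $BN$-pair data with the geometric one; by contrast the elimination of the exceptional case through the $K^*$-hypothesis is immediate, which is why the corollary follows at once from Theorem~\ref{thm.B}.
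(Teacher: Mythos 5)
Your proof is correct and follows exactly the route the paper intends: the paper gives no explicit proof, asserting only that the corollary is ``immediate'' from Theorem~\ref{thm.B}, and your write-up supplies precisely the missing steps (the Tits correspondence making the $BN$-pair data into an interpretable odd type Moufang set, Corollary~\ref{cor.HeredProper} for hereditary properness, the $K^*$-hypothesis giving both the $L$-condition on the Hua subgroups and the elimination of the simple degenerate root group case, and the final identification of the standard $BN$-pair). No gaps.
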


This corollary says something nontrivial even for simple algebraic groups over algebraically closed fields; note that such groups are $K^*$-groups of finite Morley rank by \cite[Th\'eor\`eme~4.13]{PoB87}. Of course, one is not worried about classifying the simple algebraic groups, but the question of whether or not there are nonstandard $BN$-pairs in these groups is interesting. For  some recent and rather general results in this vein (but with the restriction that ``U'' is nilpotent), one may see \cite{PrG11}.

\section{Moufang sets}\label{sec.MS}
We briefly give some background on Moufang sets. For a more thorough introduction to the general theory, the reader is encouraged to see \cite{DMeSe09}. Specifics regarding a context of finite Morley rank can be found in \cite{WiJ11}. 

As shown in \cite{DMeWe06}, every Moufang set can be constructed as follows from a, not necessarily abelian, group $(U;+,-,0)$ and a permutation $\tau \in \symm(U \cup \{\infty\})$ interchanging $0$ and $\infty$. Define $\mut$ to be $(X,\{U_x : x\in X\})$ where each $U_x$ is a subgroup of $\symm(X)$ defined as follows:
\begin{enumerate}
\item for each $u\in U$, $\alpha_u$ is the permutation of $X$ that fixes $\infty$ and sends each $v\in U$ to $v+u$;
\item $U_\infty := \{\alpha_u : u \in U\}$;
\item $U_0 := U_\infty^\tau$; and
\item $U_u := U_0^{\alpha_u}$ for each $u\in U^*$.
\end{enumerate}
Given such a construction, there is, for each $a\in U^*$, a unique element of $U_0\alpha_aU_0$ interchanging $0$ and $\infty$, and it is referred to as $\mu_a$.  It is a theorem of \cite{DMeWe06} that $\mut$ will be a Moufang set precisely when the set $\{\tau\mu_a : a \in U^*\}$ is contained in $\aut(U)$. When $\mut$ is a Moufang set, the pointwise stabilizer of $0$ and $\infty$ in the little projective group is called \textbf{the} \textdef{Hua subgroup}, and it is generated by the set $\{\mu_a\mu_b : a,b \in U^*\}$. We said above that each $2$-point stabilizer is called \textbf{a} Hua subgroup, but there should not be any confusion as all $2$-point stabilizers are conjugate. We will frequently use the fact that for any $h$ in the Hua subgroup and any $a \in U^*$ we have that $\mu_a^h = \mu_{ah}$. Finally, a Moufang set $\mut$ is called \textdef{special} if the action of $\tau$ on $U^*$ commutes with inversion.

\begin{fact}[{\cite[Main~Theorem]{SeY09}}]\label{fact.Dichotomy}
If $\mut$ is a proper Moufang set with abelian root groups, then $\mut$ is special.
\end{fact}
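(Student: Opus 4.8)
The plan is to translate both hypotheses into algebraic conditions on $\tau$ and then exploit the automorphism condition that defines a Moufang set. Writing $U$ additively, \textit{special} means precisely that $(-a)\tau = -(a\tau)$ for every $a \in U^*$, i.e. the permutation $\tau$ of $U^*$ commutes with negation. On the other side, since the little projective group always acts $2$-transitively, it is sharply $2$-transitive exactly when the stabilizer of the pair $(0,\infty)$ is trivial; thus \emph{proper} is equivalent to the Hua subgroup $H = \langle \mu_a\mu_b : a,b\in U^*\rangle$ being nontrivial. So the goal becomes: if $H \neq 1$, then $(-a)\tau = -(a\tau)$ for all $a$. The apparatus I would set up first is the explicit form of the $\mu$-maps. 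Writing $\gamma_u := \alpha_u^{\tau}\in U_0$, a direct check of the images of $0$ and $\infty$ identifies the unique element of $U_0\alpha_a U_0$ interchanging $0$ and $\infty$ as
\[
\mu_a = \gamma_{(-a)\tau^{-1}}\,\alpha_a\,\gamma_{-(a\tau^{-1})}.
\]
The crucial point is that the two outer factors are indexed by $(-a)\tau^{-1}$ and by $-(a\tau^{-1})$, and these coincide \emph{exactly} when the Moufang set is special. Thus the failure of specialness is precisely a built-in asymmetry between the two $U_0$-factors of $\mu_a$, and the whole task is to show that this asymmetry cannot coexist with $H \neq 1$.

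The engine of the argument is the hypothesis that $\tau\mu_a \in \aut(U)$ for every $a$. Feeding the formula above into this additivity, one computes for generic $x$ that
\[
x(\tau\mu_a) = \Big( \big( (x + (-a)\tau^{-1})\tau + a\big)\tau^{-1} - (a\tau^{-1}) \Big)\tau ,
\]
so that the requirement $x\mapsto x(\tau\mu_a)$ be additive is a functional equation binding together the three quantities $x\tau$, $a\tau$, and $(-a)\tau$. I would then play this equation for the parameter $a$ against the one for $-a$: using the identity $\mu_a^{-1}=\mu_{-a}$ (which holds with no specialness assumed, again by uniqueness in $U_0\alpha_{-a}U_0$), negating $a$ interchanges the two anomalous outer factors, and comparing the two functional equations should isolate exactly the relation $(-a)\tau = -(a\tau)$. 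Properness enters through the supply of genuine automorphisms in $H$: via the conjugacy relation $\mu_a^{h} = \mu_{ah}$ for $h\in H\le\aut(U)$, a single nontrivial Hua map transports a relation verified at one point $a$ across the whole $H$-orbit of $a$ in $U^*$, which is the mechanism for promoting a locally verified symmetry to the global identity that is specialness.

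The step I expect to be the genuine obstacle is exactly this passage from a local relation to the global one, because $U$ being abelian does \emph{not} make the little projective group abelian, and the assignment $a \mapsto \mu_a$ is badly non-additive in the parameter $a$; pinning down how the two $U_0$-factors interact without presupposing specialness forces a delicate two-variable computation in $U$. It is here that properness must be used essentially rather than formally: when $H=1$ the asymmetry can genuinely persist—the group is then sharply $2$-transitive, arising from a near-field rather than a field, and need not be special—so any line of reasoning that never invokes a nontrivial Hua map is bound to fail. I would therefore organize the proof around the set $\{a \in U^* : (-a)\tau = -(a\tau)\}$, showing it is nonempty and $H$-invariant and stable under enough of the operations above, and then use the nontrivial Hua maps furnished by $H\neq 1$ to force it to exhaust $U^*$. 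Establishing each of these closure properties is the technical heart of the statement, and is what makes Segev and Yang's dichotomy a substantial theorem rather than a formal consequence of the $\mu$-map calculus.
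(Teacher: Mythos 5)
First, a point of orientation: the paper contains no proof of this statement. Fact~\ref{fact.Dichotomy} is imported wholesale as the Main Theorem of \cite{SeY09} (Segev's ``Proper Moufang sets with abelian root groups are special'' --- a single-author paper, incidentally, not ``Segev and Yang''), so there is no internal argument to compare yours against; the only question is whether your sketch amounts to an independent proof. Your preparatory material is accurate and standard: specialness as $(-a)\tau=-(a\tau)$, properness as $H\neq 1$, the factorization $\mu_a=\gamma_{(-a)\tau^{-1}}\,\alpha_a\,\gamma_{-(a\tau^{-1})}$, the identity $\mu_{-a}=\mu_a^{-1}$, and the reformulation via the set $S=\{a\in U^*:(-a)\mu_a=a\}$ are all correct and are indeed where \cite{SeY09} begins. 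Everything after that, however, is a declaration of intent, and the steps you defer are precisely the theorem.

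Three concrete gaps. (i) The ``local'' step cannot work as you have set it up: the only inputs you feed into it are the additivity of $\tau\mu_a$ and the relation $\mu_{-a}=\mu_a^{-1}$, both of which hold verbatim in \emph{every} Moufang set $\mut$ --- in particular in the sharply $2$-transitive ones with abelian root groups coming from $\aff_1(\fieldF)$, which are non-proper and (for $|U|\ge 4$, by Segev--Weiss) \emph{not} special. Hence no formal consequence of these identities alone can be the relation $(-a)\tau=-(a\tau)$; properness must enter the pointwise computation itself, not merely a later globalization. You acknowledge this danger in the abstract but then organize the argument so that the nontrivial Hua maps are used only for transport. (ii) You never show $S\neq\emptyset$; in a hypothetical non-special proper Moufang set there is no a priori reason a single $a$ should satisfy $(-a)\mu_a=a$, and producing one special element is already substantive. (iii) Even granting a nonempty $H$-invariant $S$, transport along $H$-orbits via $\mu_{ah}=\mu_a^h$ does not give $S=U^*$, because $H$ need not act transitively --- or with any controlled orbit structure --- on $U^*$ (already in $\Mouf(\fieldF)$ the Hua subgroup acts by multiplication by squares). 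Segev's actual argument derives, from the existence of a single non-special element, a long cascade of identities in the $\mu$-map calculus that ultimately forces $H=1$; it fills a substantial paper and does not proceed by orbit transport. As your own closing sentence concedes, the closure properties of $S$ are ``the technical heart,'' and none of them is established here; the proposal is a reasonable reading plan for \cite{SeY09}, not a proof.
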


\begin{fact}[{\cite[Theorem~1$\cdot$2]{SeWe08}}]\label{fact.Irreducible}
If $\mut$ is a special Moufang set, then either $U$ is an elementary abelian $2$-group, or the Hua subgroup acts irreducibly on $U$.
\end{fact}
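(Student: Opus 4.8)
The plan is to prove the statement in contrapositive form: supposing that the Hua subgroup $H$ does \emph{not} act irreducibly on $U$, I will show that $U$ is an elementary abelian $2$-group. Fix then a proper nontrivial $H$-invariant subgroup $V \le U$. Since a group in which every element equals its own inverse is automatically abelian, it suffices to prove the single identity $u + u = 0$ for all $u \in U$: this already forces $U$ to be abelian of exponent $2$, as required. Because $V$ is nontrivial it contains the whole $H$-orbit of some $v \in U^*$, and because $V$ is a proper subgroup that orbit cannot generate $U$; so the task reduces to showing that a nonzero $H$-orbit fails to generate $U$ \emph{only} when $2U = 0$.

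Before the main step I would record the consequences of specialness at the level of $\mu$-maps. Each $\mu_a$ is obtained from $\tau$ by pre- and post-composing with translations $\alpha_u$, so the hypothesis that $\tau$ commutes with inversion on $U^*$ yields the symmetry $\mu_{-a} = \mu_a$ for every $a \in U^*$, together with the compatibility of the Hua action with $u \mapsto -u$. I will also use the two facts recalled in Section~\ref{sec.MS}, namely that $H = \langle \mu_a\mu_b : a,b\in U^*\rangle$ and that $\mu_a^{\,h} = \mu_{ah}$ for every $h \in H$; the latter shows in particular that the $\mu$-maps attached to the $H$-orbit of $v$ are exactly the $H$-conjugates of $\mu_v$, so that $H$-invariance of $V$ is a genuinely multiplicative closure condition on the orbit.

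The heart of the matter is a ``difference of two squares'' phenomenon. In the model Moufang set $\Mouf(\fieldF)$ the generators $\mu_a\mu_b$ of $H$ act as multiplication by the square $(ba^{-1})^2 \in (\fieldF^*)^2$, so $H$-invariance of $V$ amounts to closure under multiplication by squares, and the subgroup generated by a nonzero orbit is $v\cdot\langle(\fieldF^*)^2\rangle_{+}$. The classical identity $t = \big(\tfrac{t+1}{2}\big)^2 - \big(\tfrac{t-1}{2}\big)^2$ then shows this subgroup to be all of $\fieldF$ precisely when one may divide by $2$. I would extract the field-free analogue of this identity directly from the Moufang relations and specialness: a polarization relating the Hua maps attached to $a$, to $b$, and to their ``sum'', whose solvability for an arbitrary target element is obstructed only by the doubling map $u \mapsto u+u$. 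Running this identity with $v$ fixed and the target ranging over $U$, the $H$-invariance of $V$ propagates membership in $V$ across the polarization and forces $V = U$, contradicting properness, \emph{unless} the doubling map is identically zero. Hence $2U = 0$, and the reduction of the first paragraph finishes the proof.

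The main obstacle is precisely this polarization step: one must manufacture the quadratic, and then bilinear, behaviour of the Hua action purely from the abstract axioms of a special Moufang set, with no field or Jordan structure available, and one must track the single factor of $2$ carefully enough to see that it is the \emph{only} obstruction to generating $U$. A secondary point is to keep the bookkeeping valid without assuming $U$ abelian; here I would argue that the same polarization drives $[U,U]$ into every nontrivial $H$-invariant subgroup, so that commutativity is never needed until $2U = 0$ has been secured, at which point it comes for free.
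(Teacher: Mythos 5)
This statement is not proved in the paper at all: it is imported verbatim as Fact~\ref{fact.Irreducible} from \cite[Theorem~1$\cdot$2]{SeWe08}, so there is no internal argument to compare yours against. Judged on its own terms, your proposal has a genuine gap at its center. The reduction in your first paragraph is sound (if $H$ is not irreducible, some nonzero $H$-orbit generates a proper $H$-invariant subgroup, and exponent $2$ automatically forces $U$ to be elementary abelian), but everything then rests on the ``polarization'' step: a field-free identity relating the Hua maps attached to $a$, to $b$, and to their sum, with the doubling map $u\mapsto u+u$ as the only obstruction to generating $U$. You assert that such an identity can be ``extracted directly from the Moufang relations and specialness,'' but you never write it down, and producing it is precisely the mathematical content of the theorem. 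In a bare special Moufang set there is no field, no Jordan structure, and no a priori quadratic or bilinear behaviour of the Hua maps; manufacturing even weak fragments of such behaviour is exactly what makes the argument of \cite{SeWe08} a nontrivial multi-page computation with $\mu$-map identities, and it is closely tied to the still-open problem of attaching a quadratic Jordan division algebra to every special Moufang set with abelian root groups. The identity $t=\left(\tfrac{t+1}{2}\right)^2-\left(\tfrac{t-1}{2}\right)^2$ lives entirely inside a field and does not transport. Your closing remark that the same polarization ``drives $[U,U]$ into every nontrivial $H$-invariant subgroup'' compounds the problem, since it leans on the same unconstructed identity to dispose of noncommutativity.

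A secondary caution: your preliminary claims about $\mu$-maps should be checked against the actual identities --- for instance whether specialness yields $\mu_{-a}=\mu_a$ or instead relates $\mu_{-a}$ to $\mu_a^{-1}$. In the model $\Mouf(\fieldF)$ every $\mu_a$ is an involution of $\psl_2(\fieldF)$, so all of these expressions coincide and the example cannot distinguish them; an argument that uses the wrong one in general would silently fail. But the decisive issue is the missing polarization: as written, the proposal is a plan for a proof, not a proof.
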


Let us give an example of a (special) Moufang set. When we work in a setting of finite Morley rank, it will be the only one we have in mind.

\begin{example}\label{exam.MF}
Let $\fieldF$ be a field. We define $\Mouf(\fieldF)$ to be $\mut$ where $U:= \fieldF^+$ and $\tau$ is the permutation of $X:=U\cup\{\infty\}$ swapping $0$ and $\infty$ and sending each $x \in \fieldF^*$ to $-x^{-1}$. Then $\Mouf(\fieldF)$ is a (special) Moufang set, see \cite[Example~3.1]{DMeWe06}, with little projective group $\psl_2(\fieldF)$.
\end{example}

As most of our work in the present paper does not assume $U$ to be abelian, we will, sadly, not be blessed with the rather cozy feeling that comes with being special. However, finite Morley rank is quite adequate compensation. We will use the following fact often and without specific reference.

\begin{fact}[{\cite[Proposition~2.3]{WiJ11}}]
Let $\Mouf$ be an infinite Moufang set of finite Morley rank. The little projective group as well as all $1$-point stabilizers, all $2$-point stabilizers, and all root groups are (definable and) connected.
\end{fact}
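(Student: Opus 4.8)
The plan is to establish definability first and then bootstrap connectedness through the permutation action. Definability is immediate: the root groups are interpretable by hypothesis, while the $1$- and $2$-point stabilizers are stabilizers of (tuples of) points under the definable action of $G$ on $X$, and so are definable. For connectedness, I would first argue that $X$ is connected. Since $G$ acts $2$-transitively it acts primitively, and since $G$ is infinite while $G/G^\circ$ is finite, the connected component $G^\circ$ is a nontrivial normal definable subgroup. The orbits of $G^\circ$ form a system of blocks, so by primitivity $G^\circ$ is transitive on $X$; as $X$ is then the image of the connected group $G^\circ$ under a definable orbit map, $X$ is connected (degree $1$).

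Next I would deduce connectedness of the root groups and of $G$. Because $X$ is infinite and connected, removing the single point $\infty$ leaves $X \setdiff \{\infty\}$ of the same rank and degree, hence still connected. The root group $U_\infty$ acts regularly on $X \setdiff \{\infty\}$, so $u \mapsto u \cdot 0$ is a definable bijection $U_\infty \to X \setdiff \{\infty\}$; as a definable bijection preserves degree, $U_\infty$ has degree $1$ and is therefore connected, and the same holds for every root group by conjugacy. Since any two opposite root groups generate the little projective group (indeed $G = \langle U_0, U_\infty\rangle$ from the construction of $\mut$, as $U_u = U_0^{\alpha_u}$ with $\alpha_u \in U_\infty$), $G$ is generated by two connected definable subgroups and is thus itself connected.

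The remaining, and most delicate, point is the connectedness of the Hua subgroup $H = G_{0,\infty}$; the naive approach via its generation by the elements $\mu_a\mu_b$ only controls $H^\circ$ and leaves the component group untouched. Instead I would run a rank-and-degree computation on a big cell. Writing $U := U_\infty$, $U^- := U_0$, and $B := G_\infty = U \rtimes H$, the product map $U^- \times H \times U \to G$, $(u^-,h,u)\mapsto u^- h u$, is injective: evaluating at $\infty$ and using that $U^-$ acts regularly on $X \setdiff \{0\}$ recovers $u^-$, while $U \cap H = 1$ then recovers $h$ and $u$. Hence $U^- H U$ is in definable bijection with $U^- \times H \times U$, so $\rk(U^-HU) = 2\rk(U) + \rk(H)$ and $\deg(U^-HU) = \deg(H)$ (as $U, U^-$ are connected). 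Comparing with the orbit-stabilizer formula $\rk(G) = \rk(X) + \rk(G_\infty) = 2\rk(U) + \rk(H)$ shows $U^- H U$ has full rank, so it is generic in the connected group $G$ and therefore has degree $1$. Thus $\deg(H) = 1$ and $H$ is connected. Finally $G_\infty = U \rtimes H$ is connected-by-connected, hence connected, and conjugacy propagates connectedness to all $1$- and $2$-point stabilizers.

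The main obstacle, as indicated, is the Hua subgroup: one must avoid the trap of only proving that $H^\circ$ has the expected form and instead pin down the full component group, for which the genericity of the big cell inside the (already established) connected group $G$ is the crucial leverage.
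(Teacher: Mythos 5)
The paper does not actually prove this statement; it imports it as a Fact from \cite[Proposition~2.3]{WiJ11}, so there is no in-text proof to compare against. Judged on its own, your argument is correct and complete, and it is the natural proof: definability from interpretability of the action, connectedness of $X$ from primitivity plus transitivity of $G^\circ$, connectedness of the root groups from the regular action on $X - \{\infty\}$ via preservation of degree under definable bijections, connectedness of $G$ from $G=\langle U_0,U_\infty\rangle$ and the Indecomposability Theorem, and---the genuinely nontrivial point---connectedness of $H$ from genericity of the big cell $U_0 H U_\infty$ inside the connected group $G$. You are right to flag that the generation of $H$ by the elements $\mu_a\mu_b$ does not by itself settle the component group, and the rank-and-degree comparison $\rk(U_0HU_\infty)=2\rk(U)+\rk(H)=\rk(G)$ is exactly the right leverage. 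The only cosmetic slip is in the injectivity of $(u^-,h,u)\mapsto u^-hu$: ``evaluating at $\infty$ recovers $u^-$'' presumes the left-action convention (rightmost factor acts first); with a right action one should instead evaluate at $0$ to recover $u$ and then use $U_0\cap G_\infty=1$. Either way the map is injective, so nothing is lost.
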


\subsection{Root subgroups}

Root subgroups are the essential ingredient of an inductive approach to any theorem about Moufang sets. 

\begin{definition}
Let $\mut$ be a Moufang set. A \textdef{root subgroup} of $U$ is a subgroup $V \le U$ such that there exists some $v\in V^*$ with $V^*\mu_v = V^*$. 
\end{definition}

A root subgroup gives rise to a  sub-Moufang set $\Mouf(V,\rho)$ where $\rho$ is the restriction of $\mu_v$ to $V\cup\{\infty\}$ for some $v\in V^*$. This Moufang set will be the same for every $v\in V^*$ and will be called the \textdef{Moufang set induced by $V$}. Extremely useful is the fact that every subgroup of the form $C_U(h)$ for $h$ in the Hua subgroup is a root subgroup. Also, one can show that the root subgroup relation is transitive: if $V$ is a root subgroup of $U$ and $W$ is a root subgroup of $V$ (in the Moufang set induced by $V$), then $W$ is a root subgroup of $U$.

When pulling back information from an induced Moufang to the original, we will utilize the subgroups $G(V)$ and $H(V)$ defined below as well as the fact that the induced little projective group can be identified with $G(V)/C_{G(V)}(V)$.

\begin{definition}
Let $\mut$ be a Moufang set with little projective group $G$ and Hua subgroup $H$. For $V$ a root subgroup of $U$, define
\begin{itemize}
\item $G(V) := \langle \alpha_v , \mu_v : v \in V^* \rangle \le G$, and
\item $H(V) := \langle \mu_v\mu_w : v,w \in V^* \rangle \le H$.
\end{itemize}
\end{definition}

\begin{fact}[{\cite[Lemma~3.7]{WiJ11}}]\label{Fact_InterpretInduced}
Let $\mut$ be an infinite Moufang set of finite Morley rank. If $V$ is an infinite definable root subgroup of $U$, then $V$ induces an interpretable Moufang set with $G(V)$ definable and connected. 
\end{fact}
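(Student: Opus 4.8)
The plan is to reduce the entire statement to two assertions about the subgroup $G(V) \le G$: that it is definable and that it is connected. First I would record the ambient bookkeeping. The set $Y := V \cup \{\infty\}$ is a definable subset of $X$, and the defining property of a root subgroup guarantees that some $\mu_v$ maps $Y$ to itself; together with the fact that each $\alpha_v$ ($v \in V$) fixes $\infty$ and preserves $V$, this shows that every generator of $G(V)$ stabilizes $Y$, so $G(V)$ stabilizes $Y$ setwise and restriction to $Y$ is a definable action of $G(V)$ on $Y$ whose kernel is the pointwise stabilizer $C_{G(V)}(V)$ appearing above. Since the excerpt already identifies the little projective group of the induced Moufang set with $G(V)/C_{G(V)}(V)$, once $G(V)$ is known to be definable the kernel is definable, the quotient is interpretable, and the induced action on the definable set $Y$ is interpretable. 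Hence the whole problem collapses to showing that $G(V)$ is definable and connected.

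For that I would invoke Zilber's indecomposability (generation) theorem: in a group of finite Morley rank, the subgroup generated by a definable family of definable connected subgroups is again definable and connected (indeed a product of boundedly many members of the family). The natural family here is built from the image $V_\infty = \{\alpha_v : v \in V\}$ of $V$ in $U_\infty$ and its conjugate $V_0 := V_\infty^{\mu_{v_0}}$ for a fixed $v_0 \in V^*$ (the root group at $0$ of the induced structure); more robustly one takes the definable family $\{V_\infty^{\mu_v} : v \in V^*\}$ together with $V_\infty$. Granting that these are connected, the theorem yields a definable connected subgroup $N$, and the remaining point is the identity $N = G(V)$. One containment is immediate; for the reverse I would show that each generator $\mu_v$ lies in $N$ by writing it, via the standard decomposition $\mu_v \in U_0\alpha_v U_0$ transported to the induced structure, as a product of elements of $V_\infty$ and $V_\infty^{\mu_v}$, using the relation $\mu_a^h = \mu_{ah}$ to pass between the various $\mu_v$.

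The main obstacle is precisely the connectedness input to Zilber's theorem: an infinite definable root subgroup $V$ is not obviously connected, so $V_\infty$ need not be connected, and one must arrange the argument so that $G(V)$ comes out connected regardless. I would handle this by generating first with the definable connected subgroup $V_\infty^\circ$ and its $\mu_v$-conjugates, producing a definable connected normal subgroup $N_0 \trianglelefteq G(V)$, and then showing $V_\infty \le N_0$ and each $\mu_v \in N_0$, so that $G(V) = N_0$ is connected; controlling the finite quotient $V_\infty/V_\infty^\circ$ here, again through the Hua relations $\mu_a^h = \mu_{ah}$ and the fact that $\mu_v^2 \in H(V)$, is the delicate step. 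Once $G(V)$ is definable and connected, the induced root groups are the images in $G(V)/C_{G(V)}(V)$ of the definable sets $V_\infty$ and its conjugates, hence interpretable, and the induced Moufang set is interpretable with $G(V)$ definable and connected, as claimed.
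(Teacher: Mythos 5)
First, a caveat: this statement is quoted as a Fact from \cite[Lemma~3.7]{WiJ11}, so the present paper contains no proof to compare against; I can only assess your argument on its own terms. Your overall skeleton is the right one: reduce everything to the definability and connectedness of $G(V)$, observe that $G(V) = \langle V_\infty, V_0\rangle$ with $V_0 := V_\infty^{\mu_{v_0}}$ for a fixed $v_0\in V^*$ (each $\mu_w$, $w\in V^*$, lies in $V_0\alpha_w V_0$ \emph{exactly}, not just modulo $C_{G(V)}(V)$, by transporting the decomposition $\mu_w \in U_0\alpha_w U_0$ to the induced Moufang set and invoking the uniqueness of the element of $U_0\alpha_w U_0$ swapping $0$ and $\infty$), and then apply Zil'ber's indecomposability theorem. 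That part is sound.

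The genuine gap is exactly the point you flag and then defer: the connectedness of $V$, hence of $V_\infty$ and $V_0$. This cannot be waved away, because the subgroup generated by two definable but \emph{disconnected} subgroups of a group of finite Morley rank need not be definable at all (already two finite subgroups can generate an infinite, nondefinable subgroup, as in an infinite dihedral configuration), so Zil'ber's theorem genuinely requires indecomposable generators. Your proposed repair --- generate with $V_\infty^\circ$ and its conjugates to get $N_0$ and then ``control the finite quotient $V_\infty/V_\infty^\circ$ via the Hua relations and $\mu_v^2\in H(V)$'' --- is not carried out, and it is not clear it can be: those relations say nothing about why a coset of $V_\infty^\circ$ should fall into $N_0$. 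The standard way to close the gap is to prove that $V$ is in fact connected, as follows. Set $N := \langle (V_\infty^\circ)^g : g\in G(V)\rangle$; by Zil'ber's theorem $N$ is definable, connected, and normal in $G(V)$. Its image in the induced little projective group $G(V)/C_{G(V)}(V)$ is nontrivial (as $V_\infty^\circ$ is infinite and acts faithfully on $V$ by translations) and normal, hence transitive on $Y$ since the induced group is $2$-transitive. Thus the definable connected group $N$ acts transitively on $Y$, so $Y$ is in definable bijection with a coset space of $N$ and has Morley degree $1$; hence $V = Y\setminus\{\infty\}$ has degree $1$ and, being a definable group of degree $1$, is connected. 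Only now does your Zil'ber argument with $V_\infty$ and $V_0$ go through, yielding $G(V)$ definable and connected, after which your reduction to interpretability of the induced Moufang set is fine.
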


\section{$\pstar$-Moufang sets}\label{sec.PStar}
We now begin our analysis of $\pstar$-Moufang sets; this will be refined for the odd type setting in Section~\ref{sec.PStarOdd}. The results of this section are drawn from the author's thesis and are, for the most part, just mild generalizations of those in \cite{WiJ11}.

\begin{definition}
Let $\Mouf:= \mut$ be an infinite Moufang set of finite Morley rank.
\begin{itemize}
\item $\Mouf$ is \textdef{projective} if $\Mouf \cong \Mouf(\fieldF)$ for some algebraically closed field $\fieldF$.
\item A root subgroup is \textdef{projective} if it induces a projective Moufang set.
\item $\Mouf$ is a \textdef{$\pstar$-Moufang set} if every infinite proper definable root subgroup of $U$ is projective.
\end{itemize}
\end{definition}

$\pstar$-Moufang sets are the critical objects of study when entertaining the idea that some proper Moufang sets of finite Morley rank may not be projective. To make this explicit, we restrict to so-called \hp{} Moufang sets.

\begin{definition}
Let $\mut$ be an infinite Moufang set of finite Morley rank. Then $\mut$ is said to be \textdef{\hp{}} if it is proper and every infinite definable root subgroup of $U$ induces a proper Moufang set.
\end{definition}

Since the root subgroup relation is transitive, every infinite definable root subgroup of a \hp{} Moufang set induces a Moufang set that is also \hp{}. Special Moufang sets are the primary examples of \hp{} Moufang sets. Indeed, it is a fact that special Moufang sets are necessarily proper whenever the root groups have at least $3$ elements. As the property of being special easily passes to root subgroups, we see that infinite special Moufang sets of finite Morley rank are \hp{}. Of course, proper $\pstar$-Moufang sets are also \hp{}. 

\begin{lemma}\label{lem.pstarExists}
If $\mut$ is an infinite \hp{} Moufang set of finite Morley rank that is not projective, then $U$ contains an infinite definable root subgroup that induces a proper $\pstar$-Moufang set that is also not projective.
\end{lemma}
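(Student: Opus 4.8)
The plan is to descend to a root subgroup of minimal Morley rank, exploiting that $\rk$ is a well-founded, $\mathbb{N}$-valued dimension on the definable subgroups of $U$. I would first collect into a family $\mathcal{S}$ all infinite definable root subgroups $V$ of $U$ whose induced Moufang set is not projective, and observe that $\mathcal{S} \ne \emptyset$: the group $U$ is a root subgroup of itself, since for any $a \in U^*$ the permutation $\mu_a$ fixes $\{0,\infty\}$ setwise and hence satisfies $U^*\mu_a = U^*$, and the Moufang set it induces is $\mut$ itself, which is non-projective by hypothesis. Choosing $V \in \mathcal{S}$ with $\rk(V)$ minimal, I claim that $V$ is the root subgroup sought by the lemma.

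Two of the three required properties would then be essentially free. First, $V$ induces a non-projective Moufang set by membership in $\mathcal{S}$. Second, because $\mut$ is \hp{} and $V$ is an infinite definable root subgroup, the Moufang set induced by $V$ is again \hp{}, so in particular proper. The work is thus to verify that $V$ induces a $\pstar$-Moufang set.

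To do this, I would take an arbitrary infinite proper definable root subgroup $W$ of $V$ (in the Moufang set induced by $V$) and show it is projective. By transitivity of the root subgroup relation, $W$ is a root subgroup of $U$, and --- this being the one piece of bookkeeping to check against the definition of induction --- the Moufang set that $W$ induces is the same whether it is formed inside $\mut$ or inside the Moufang set induced by $V$. The key input is then a strict drop in rank: since the Moufang set induced by $V$ is an infinite Moufang set of finite Morley rank by Fact~\ref{Fact_InterpretInduced}, its root group $V$ is connected, whence the proper definable subgroup $W$ satisfies $\rk(W) < \rk(V)$. Now $W$ is an infinite definable root subgroup of $U$ with $\rk(W) < \rk(V)$, so minimality of $\rk(V)$ forces $W \notin \mathcal{S}$, i.e.\ $W$ is projective. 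As $W$ was arbitrary, $V$ induces a $\pstar$-Moufang set.

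I expect the main obstacle to be securing the strict inequality $\rk(W) < \rk(V)$, which is what makes the minimality argument bite: this hinges on the connectedness of $V$, and hence on first realizing the object induced by $V$ as a genuine infinite Moufang set of finite Morley rank so that the connectedness of root groups applies. The remaining delicate point is the compatibility of the induction construction with transitivity, namely that ``projective'' is unambiguous for $W$ regardless of the intermediate root subgroup $V$; this should follow directly from the way the $\mu$-maps and their restrictions are computed.
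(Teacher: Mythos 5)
Your proposal is correct and is essentially the paper's own proof: the paper likewise takes an infinite definable nonprojective root subgroup $V$ of minimal rank and asserts that it ``clearly'' induces a proper $\pstar$-Moufang set that is not projective. You have simply spelled out the details behind that ``clearly'' --- properness from the \hp{} hypothesis, the strict rank drop via connectedness of $V$, and the compatibility of induction with transitivity of the root subgroup relation --- all of which are the intended justifications.
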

\begin{proof}
Assume that $\mut$ is \hp{} and not projective. Let $V$ be an infinite definable nonprojective root subgroup that is of minimal rank among all such root subgroups. Clearly $V$ induces a proper $\pstar$-Moufang set that is not projective.
\end{proof}

Let us fix some notation for the remainder of the present section. 

\begin{setup}
$\mut$ is an infinite proper Moufang set of finite Morley rank. Let $G$ be the little projective group, $H$ the Hua subgroup, and $X:=U \cup \{\infty\}$.
\end{setup}

\subsection{Projective root subgroups}

The following proposition is the starting point for all of our analysis. Note that for a group $B$, we write $B=A_1*A_2$ if $A_1$ and $A_2$ are commuting subgroups that generate $B$, i.e. $B$ is the \textdef{central product} of $A_1$ and $A_2$.

\begin{proposition}\label{prop.NHV}
Let $V$ be an infinite definable projective root subgroup of $U$. Then $V$ induces a Moufang set with little projective group isomorphic to $\psl_2(\fieldF)$ for $\fieldF$ an algebraically closed field, and for $Y:= V\cup \{\infty\}$, the following are true:
\begin{enumerate}
\item $G(V) \cong  \ssl_2(\fieldF)$ or $\psl_2(\fieldF)$,
\item $H(V)$ is definable and isomorphic to $\fieldF^\times$,
\item $H(V)$  generates an interpretable field, isomorphic to $\fieldF$, in $\emorph(V)$,
\item $N_H(V) = H(V) * C_H(V)$ with $H(V) \cap C_H(V) = Z(G(V))$, and
\item $N_G(Y) = G(V) * C_H(V)$ with $G(V) \cap C_H(V) = Z(G(V))$.
\end{enumerate}
\end{proposition}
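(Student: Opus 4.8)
The plan is to establish a single master fact---that $V$ induces a copy of $\psl_2(\fieldF)$ with $\mu_v$ acting as a genuine projective transformation---and then harvest all five conclusions from the internal structure of $\psl_2$ and $\ssl_2$ over an algebraically closed field. Since $V$ is projective, by definition the induced Moufang set is isomorphic to $\Mouf(\fieldF)$ for some algebraically closed $\fieldF$, so its little projective group is $\psl_2(\fieldF)$; by the identification noted before the proposition, this group is $G(V)/C_{G(V)}(V)$. First I would pin down $G(V)$ itself: since $\psl_2(\fieldF)$ has trivial Schur multiplier in the relevant sense and $G(V)$ surjects onto it with kernel $C_{G(V)}(V)$ acting trivially on $Y$, $G(V)$ is a central extension, forcing $G(V)\cong\ssl_2(\fieldF)$ or $\psl_2(\fieldF)$; this gives item (1), and the quotient description makes $C_{G(V)}(V)=Z(G(V))$.

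Items (2) and (3) are essentially internal computations inside the induced Moufang set $\Mouf(\fieldF)$. Here the Hua subgroup of the induced set is $H(V)=\langle \mu_v\mu_w : v,w\in V^*\rangle$, and in $\Mouf(\fieldF)$ one reads off directly from Example~\ref{exam.MF} that the Hua subgroup is $\fieldF^\times$ acting by multiplication, which is definable; the action of $H(V)$ on $V$ then generates the field $\fieldF$ inside $\emorph(V)$. Definability is free from Fact~\ref{Fact_InterpretInduced}, which tells us $G(V)$ is definable and connected, so its subgroups appearing here are interpretable. These two items really just amount to transporting the explicit field structure of $\Mouf(\fieldF)$ back along the isomorphism.

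The substantive part is items (4) and (5), the central product decompositions. The plan is to work in $N_G(Y)$ and analyze its action on $Y$: every element of $N_G(Y)$ induces an automorphism of the induced Moufang set, giving a homomorphism $N_G(Y)\to\aut(\Mouf(\fieldF))$ whose image contains the little projective group $G(V)/Z$. I would argue that the relevant $N_H(V)$-image lands in the little projective group together with the field-automorphism part, and then show that modulo the kernel $C_H(V)$ (the elements centralizing $V$) one recovers exactly $H(V)$, respectively $G(V)$. The decompositions $N_H(V)=H(V)*C_H(V)$ and $N_G(Y)=G(V)*C_H(V)$ then follow from verifying that $H(V)$ and $G(V)$ commute with $C_H(V)$ and that the intersections collapse to $Z(G(V))$, the latter using item (1) together with $C_{G(V)}(V)=Z(G(V))$.

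The main obstacle I anticipate is the commuting and intersection bookkeeping in (4) and (5)---specifically, showing that no element of $N_G(Y)$ outside $G(V)C_H(V)$ can arise. This requires ruling out nontrivial field automorphisms and ensuring the normalizer does not acquire extra elements beyond those accounted for by the central product; the cleanest route is to compare ranks or to use that any automorphism of $\Mouf(\fieldF)$ fixing the field pointwise is inner, reducing the graph/field-automorphism contribution to something absorbed by $C_H(V)$. I would treat (5) by reducing to (4): since $N_G(Y)$ is generated by $G(V)$ and the part of it fixing every point of $Y$ (which centralizes $V$ and lies in $H$, hence in $C_H(V)$), the decomposition in (5) becomes the decomposition in (4) enlarged by $G(V)$, and the intersection statement again reduces to identifying $Z(G(V))$ via item (1).
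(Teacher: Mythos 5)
Your overall architecture matches the paper's: identify the induced little projective group with $G(V)/C_{G(V)}(V)\cong\psl_2(\fieldF)$, get (1) from the theory of perfect central extensions, read off (2)--(3) from the explicit structure of $\Mouf(\fieldF)$, and obtain (4)--(5) by killing the field-automorphism contribution of $N_H(V)$ (the paper does this via the interpretable field $E$ generated by $H(V)$ in $\emorph(V)$ and the fact that $N_H(V)$ must then act $E$-linearly on the one-dimensional $E$-space $V$ --- essentially your ``no definable field automorphisms'' route, not a rank comparison). But there is one genuine gap: you conflate $H(V)=\langle\mu_v\mu_w : v,w\in V^*\rangle$, which is a subgroup of $G$, with the Hua subgroup of the \emph{induced} Moufang set, which lives in the quotient $G(V)/C_{G(V)}(V)$. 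What you can ``read off from Example~\ref{exam.MF}'' is that the \emph{image} of $H(V)$ in that quotient is $\fieldF^\times$; a priori $H(V)$ itself might fail to contain $C_{G(V)}(V)$ and hence fail to be the full two-point stabilizer $H(V)C_{G(V)}(V)$ of $G(V)$. This is exactly the point where the paper inserts an argument: when $G(V)\cong\ssl_2(\fieldF)$, any element of $G(V)$ swapping $0$ and $\infty$ squares to the central involution, so $\mu_v^2$ generates $Z(G(V))=C_{G(V)}(V)$, and $\mu_v^2=\mu_v\mu_v\in H(V)$. Without this step you lose the definability of $H(V)$ in (2) (it is definable because it \emph{equals} the point stabilizer $G(V)_{0,\infty}$, not because every subgroup of a definable group is definable --- your ``definability is free'' remark is not valid as stated) and, more seriously, you lose the intersection claims in (4) and (5): $H(V)\cap C_H(V)=Z(G(V))$ requires $Z(G(V))\le H(V)$.

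Two smaller points in the same vein: the assertion that ``the quotient description makes $C_{G(V)}(V)=Z(G(V))$'' only gives the inclusion $Z(G(V))\le C_{G(V)}(V)$ for free (since $\psl_2$ is centerless); the reverse inclusion, and the perfectness of $G(V)$ needed to run the central-extension argument for (1), are separate facts that the paper imports from the literature on special Moufang sets. These are citation matters rather than errors, but they should be acknowledged rather than folded into ``$G(V)$ is a central extension.''
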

\begin{proof}
The proof of this proposition is nearly identical to that of \cite[Proposition~3.11]{WiJ11}. We identify the induced little projective group with $G(V)/C_{G(V)}(V)$. As $V$ is projective, there is an algebraically closed field $\fieldF$ such that $G(V)/C_{G(V)}(V)$ acting on $Y$ is isomorphic to $\psl_2(\fieldF)$ acting naturally on the projective line over $\fieldF$. 

By Fact~\ref{Fact_InterpretInduced}, $G(V)$ is definable and connected, and \cite[Lemma~3.2(3)]{SeY08} says that $G(V)$ has center $C_{G(V)}(V)$. Additionally, the proof of \cite[Lemma~3.2(4)]{SeY08} only requires that the \emph{induced} Moufang set be special, a hypothesis we certainly meet, so  $G(V)$ is perfect. Now \cite[Lemma~3.10]{WiJ11} applies, and $G(V) \cong  \ssl_2(\fieldF)$ or $\psl_2(\fieldF)$. 

For the next two items, we first note that the pointwise stabilizer of $0$ and $\infty$ in $G(V)$, namely $H(V)C_{G(V)}(V)$, is definable and isomorphic to the stabilizer of $0$ and $\infty$ in $\ssl_2(\fieldF)$ or $\psl_2(\fieldF)$. The third item is now clear, and to complete the second item, we show that  $C_{G(V)}(V) \le H(V)$. The nontrivial case is when $G(V) \cong  \ssl_2(\fieldF)$. Fix a $v\in V^*$. Then, $\mu_v$ is in $G(V)_{\{0,\infty\}} \setdiff G(V)_{0,\infty}$. When $\ssl_2(\fieldF)$ acts on the projective line, each element that swaps $0$ and $\infty$ squares to the central involution. Thus, $\langle \mu_v^2 \rangle = Z(G(V)) = C_{G(V)}(V)$, and the second item is complete as $\mu_v^2$ is in $H(V)$.

We now give the structure of $N_H(V)$. Notice that $H(V)$ is normal in $N_H(V)$ and is centralized by $C_H(V)$.  We have already mentioned that $G(V)$ has center $C_{G(V)}(V)$, so we see that $H(V) \cap C_H(V) = G(V) \cap C_H(V) = Z(G(V))$, since $H(V)$ contains $Z(G(V))$. It remains to show that  $N_H(V) = H(V)C_H(V)$, which we do exactly as in \cite[Proposition~3.11]{WiJ11}. Now, $H(V)$ generates an interpretable field in $\emorph(V)$, say $E$, and $N_H(V)$ acts on $E$ as a group of field automorphisms. By \cite[I,~Lemma~4.5]{ABC08}, $N_H(V)$ acts $E$-linearly on $V$. As $V$ is $1$-dimensional over $E$ with $H(V)$ inducing all of $E^\times$, we find that $N_H(V) = H(V)C_H(V)$. This completes the fourth item, and the final point now follows from \cite[Lemma~3.2(2)]{SeY08}.
\end{proof}

The proposition yields the following two important corollaries.

\begin{corollary}\label{cor.RootAndChar}
If $V$ is an infinite definable projective root subgroup of $U$ and $A$ is an $H(V)$-invariant subgroup of $U$, then either $V \le A$ or $V \cap A = 0$.
\end{corollary}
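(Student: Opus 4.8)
The plan is to reduce the statement to a triviality of one-dimensional linear algebra by exploiting the field structure furnished by Proposition~\ref{prop.NHV}. The central object is $W := V \cap A$, and the first thing I would check is that $W$ is an $H(V)$-invariant subgroup of $V$: the subgroup $A$ is $H(V)$-invariant by hypothesis, and $V$ is normalized by $H(V)$ since $H(V) \le N_H(V)$ (indeed $N_H(V) = H(V) * C_H(V)$ by Proposition~\ref{prop.NHV}(4)), so the intersection is carried to itself by every element of $H(V)$.

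Next I would invoke items (2) and (3) of Proposition~\ref{prop.NHV}, together with the last paragraph of its proof, to record that $H(V)$ acts on $V$ as the full multiplicative group $E^\times$ of an interpretable field $E \cong \fieldF$ generated in $\emorph(V)$, and that $V$ is $1$-dimensional over $E$. Concretely, fixing any $w \in V^*$ identifies $V$ with $E$ via $e \mapsto e(w)$, and under this identification the action of $H(V)$ becomes scalar multiplication by $E^\times$. With this in hand, I would argue that any $H(V)$-invariant subgroup of $V$ is automatically an $E$-subspace: $H(V)$-invariance means $e \cdot W = W$ for every $e \in E^\times$, and since $W$ is already an additive subgroup containing $0$, it is then stable under multiplication by all of $E = E^\times \cup \{0\}$. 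As $V$ is $1$-dimensional over $E$, the only $E$-subspaces are $0$ and $V$.

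Applying this to $W = V \cap A$ gives the desired dichotomy: either $V \cap A = 0$, or $V \cap A = V$ and hence $V \le A$. I do not expect a genuine obstacle here, as the entire content is the field structure supplied by Proposition~\ref{prop.NHV}; the only point meriting care is the verification that the $H(V)$-action on $V$ coincides with full scalar multiplication by $E^\times$ rather than by some proper subgroup, and this is precisely what is established at the close of that proposition's proof when it is noted that $H(V)$ induces all of $E^\times$ on the one-dimensional space $V$.
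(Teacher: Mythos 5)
Your proof is correct and rests on the same foundation as the paper's, namely Proposition~\ref{prop.NHV}; the paper simply phrases the key point more directly, observing in one line that $H(V)$ acts transitively on $V^*$ and normalizes $V\cap A$, so that $V\cap A$ is either $0$ or contains all of $V^*$. Your detour through the interpretable field $E$ and the $1$-dimensionality of $V$ is a valid repackaging of that same transitivity, so there is nothing to fix.
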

\begin{proof}
$H(V)$ acts transitively on $V^*$ and normalizes $V\cap A$.
\end{proof}

\begin{corollary}\label{cor.MaxRoot}
If $V \le W$ are two infinite definable projective root subgroups of $U$, then $V = W$.
\end{corollary}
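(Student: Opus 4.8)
The plan is to run Proposition~\ref{prop.NHV} on both $V$ and $W$ and then reduce the equality $V=W$ to a statement about subfields of an algebraically closed field, where finite Morley rank does the essential work. First, since $V^* \subseteq W^*$, every generator $\mu_v\mu_w$ (with $v,w\in V^*$) of $H(V)$ is already a generator of $H(W)$, so $H(V) \le H(W)$. Applying Proposition~\ref{prop.NHV}(2),(3) to $W$, I identify $W$ with the one-dimensional space over the interpretable field $E_W\cong\fieldF$ that $H(W)$ generates in $\emorph(W)$, with $H(W)$ acting as the scalar group $E_W^\times$. Fixing $v_0\in V^*$ (possible as $V$ is infinite), let $S \le E_W^\times$ be the image of $H(V)$ under this scalar action.

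Next I would extract a subfield. By Proposition~\ref{prop.NHV}(2) applied to $V$, the group $H(V)$ acts transitively on $V^*$; since $H(V)\le H(W)$ acts on $V\le W$ by the scalars $S$, transitivity gives $V^* = S\,v_0$. Identifying $W$ with $E_W$ via the basis $v_0$, the subgroup $V$ corresponds to $S\cup\{0\}$, and closure of $V$ under addition, together with $S$ being a multiplicative subgroup of $E_W^\times$, forces $K := S\cup\{0\}$ to be a subfield of $E_W$ with $V = K v_0$. Moreover $H(V)$ acts on $V$ exactly as multiplication by $K^\times$, so the field it generates in $\emorph(V)$ is $K$ itself; by Proposition~\ref{prop.NHV}(3) this field is isomorphic to the algebraically closed field $\fieldE$ attached to the projective subgroup $V$, so $K$ is algebraically closed.

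It remains to prove $K = E_W$, and this is the only point that genuinely uses finite Morley rank; over an arbitrary group the inclusion $V\le W$ could be strict, since an algebraically closed field may well contain a proper algebraically closed subfield. Here, however, $E_W$ is a vector space over the definable field $K$, and if $[E_W:K]$ were infinite I could choose $K$-independent $a_0,\dots,a_m\in E_W$ and form the definable $K$-subspace $\{\sum_i k_ia_i : k_i\in K\}$, which is the image of an injective definable map from $K^{m+1}$ and hence has rank $(m+1)\rk(K)\ge m+1$ for every $m$; this is absurd as $\rk(E_W)<\infty$. Thus $E_W$ is a finite extension of the algebraically closed field $K$, so $E_W = K$ and $V = K v_0 = E_W v_0 = W$. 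I expect the last paragraph to be the crux: Proposition~\ref{prop.NHV} hands over two algebraically closed fields $K\subseteq E_W$, and it is precisely the finiteness of the Morley rank that rules out a proper algebraically closed subfield and yields $V=W$.
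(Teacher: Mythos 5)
Correct. Your argument---pulling $H(V)\le H(W)$ back to a definable multiplicative subgroup $S\le E_W^\times$ with $V=(S\cup\{0\})v_0$, recognizing $K=S\cup\{0\}$ as an algebraically closed subfield via Proposition~\ref{prop.NHV}(3) applied to $V$, and then using finite Morley rank to rule out an infinite-degree (hence any proper) extension $K\subsetneq E_W$---is sound, and since the paper's own proof simply defers to the corresponding corollary of the earlier reference, this field-theoretic reduction is exactly the intended route.
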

\begin{proof}
In light of Proposition~\ref{prop.NHV}, the proof of this corollary is identical to that of \cite[Corollary~3.13]{WiJ11}.
\end{proof}

\subsection{$H$-invariant projective root subgroups}

We now collect a couple of items regarding $H$-invariant root subgroups.  We begin with a lemma that is a trivial, but important, consequence of Proposition~\ref{prop.NHV}.

\begin{lemma}\label{lem.HNormalRoot}
If $V$ is an infinite definable $H$-invariant projective root subgroup of $U$, then $H = H(V)*C_H^\circ(V)$, and in particular, $H(V) \le Z(H)$.
\end{lemma}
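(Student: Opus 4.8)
The plan is to apply Proposition~\ref{prop.NHV} with the Hua subgroup $H$ playing the role of the normalizer. Since $V$ is $H$-invariant by hypothesis, we have $N_H(V) = H$, so part (4) of the proposition immediately gives $H = N_H(V) = H(V) * C_H(V)$ with $H(V) \cap C_H(V) = Z(G(V))$. The only gap between this and the claimed statement is that the lemma asserts the \emph{connected} centralizer $C_H^\circ(V)$ rather than $C_H(V)$.

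First I would record that $H$ is connected (by the cited fact that all Hua subgroups are connected in a Moufang set of finite Morley rank), and that $H(V) \cong \fieldF^\times$ is connected and definable by part (2) of Proposition~\ref{prop.NHV}. To pass from $C_H(V)$ to $C_H^\circ(V)$ in the central product, I would argue that a connected group which is a central product $H(V) * C_H(V)$ must in fact equal $H(V) * C_H^\circ(V)$: since $H(V)$ and $C_H(V)$ commute and generate the connected group $H$, and $H(V)$ is connected, the quotient $H/H(V)$ is connected and is a quotient image of $C_H(V)$, which forces $C_H(V)$ to meet $H(V)$ in all of $C_H(V)/C_H^\circ(V)$; more cleanly, $H = H(V) \cdot C_H(V)$ together with connectedness of $H$ and $H(V)$ yields $H = (H(V) \cdot C_H^\circ(V)) \cdot F$ for a finite set $F$, and comparing connected components gives $H = H(V) * C_H^\circ(V)$. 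I would phrase this as: $H$ connected equals $H(V) C_H(V)$, so $H = H(V) C_H^\circ(V)$ because the finite-index subgroup $H(V) C_H^\circ(V)$ contains the connected group $H$.

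For the ``in particular'' clause, that $H(V) \le Z(H)$, I would note that $H(V)$ commutes with $C_H^\circ(V)$ by the definition of the central product, and $H(V)$ centralizes itself because $H(V) \cong \fieldF^\times$ is abelian. Since $H = H(V) * C_H^\circ(V)$ is generated by these two commuting abelian-on-$H(V)$ pieces, every element of $H$ centralizes $H(V)$, giving $H(V) \le Z(H)$.

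The main obstacle I anticipate is purely the connectedness bookkeeping in the second paragraph — ensuring that replacing $C_H(V)$ by $C_H^\circ(V)$ does not lose any of $H$. This is routine in finite Morley rank (a connected group generated by commuting definable subgroups one of which is connected is generated by that connected subgroup together with the connected component of the other), but it is the one point where some care is needed, since a priori $C_H(V)$ could be disconnected and the naive substitution is not automatic. Everything else is a direct quotation of Proposition~\ref{prop.NHV}(4) specialized to $N_H(V)=H$.
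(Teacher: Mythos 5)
Your proof is correct and follows exactly the paper's route: the paper's own proof is the one-line remark that everything follows from Proposition~\ref{prop.NHV} once one remembers that $H$ is connected, which is precisely the argument you spell out (specializing part~(4) to $N_H(V)=H$ and using connectedness of $H$ to pass from $C_H(V)$ to the finite-index definable subgroup $H(V)C_H^\circ(V)$). Your handling of the connected-component bookkeeping and of the ``in particular'' clause is the standard and intended one.
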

\begin{proof}
Everything follows directly from Proposition~\ref{prop.NHV} upon remembering that $H$ is connected.
\end{proof}

The next proposition says that in a  $\pstar$-setting either $H$ is isomorphic to the multiplicative group of a field or $H$ is close to acting irreducibly on $U$, in some weak sense. Both conclusions are nice approximations to the situation for $\Mouf(\fieldF)$.

\begin{proposition}\label{prop.TwoHNormalRoot}
Assume that $\mut$ is a $\pstar$-Moufang set. If $U$ contains distinct infinite proper definable $H$-invariant root subgroups $V$ and $W$, then $H=H(V)=H(W)$.
\end{proposition}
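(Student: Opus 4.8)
The plan is to reduce the statement to showing that $C_H^\circ(V)=1$ and, symmetrically, $C_H^\circ(W)=1$. Since $V$ and $W$ are infinite, proper, definable, and $\mut$ is a $\pstar$-Moufang set, both root subgroups are projective, so Lemma~\ref{lem.HNormalRoot} gives $H=H(V)*C_H^\circ(V)=H(W)*C_H^\circ(W)$ with $H(V),H(W)\le Z(H)$; thus $C_H^\circ(V)=1$ forces $H=H(V)$ and $C_H^\circ(W)=1$ forces $H=H(W)$, which is exactly the conclusion. First I would record that $V\cap W=0$: as $W$ is $H$-invariant it is $H(V)$-invariant, so Corollary~\ref{cor.RootAndChar} gives $V\le W$ or $V\cap W=0$, and the first case together with Corollary~\ref{cor.MaxRoot} would force $V=W$, against our hypothesis.

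The engine of the argument is the following rigidity for $C_H^\circ(V)$. If $1\ne h\in C_H^\circ(V)$, then $C_U(h)$ is a root subgroup containing the infinite group $V$; it is proper because $H$ acts faithfully on $U$, hence infinite, and by the $\pstar$-hypothesis projective. As $V\le C_U(h)$ are both projective root subgroups, Corollary~\ref{cor.MaxRoot} yields $C_U(h)=V$. Together with $V\cap W=0$ this gives $C_W(h)=V\cap W=0$, so every nontrivial element of $C_H^\circ(V)$ acts on $W$ without nonzero fixed points. Since $H(W)\le Z(H)$, the group $C_H^\circ(V)$ commutes with $H(W)$, which by Proposition~\ref{prop.NHV} induces $E_W^\times$ on the one-dimensional space $W$ and generates the field $E_W$ in $\emorph(W)$; arguing as in the proof of Proposition~\ref{prop.NHV} (via \cite[I,~Lemma~4.5]{ABC08}), $C_H^\circ(V)$ acts $E_W$-linearly on $W$, hence as a definable connected subgroup of the one-dimensional torus $E_W^\times$. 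This subgroup is trivial precisely when $C_H^\circ(V)$ centralizes $W$, which by the fixed-point-free conclusion happens only if $C_H^\circ(V)=1$; otherwise $C_H^\circ(V)$ induces all of $E_W^\times$ and so acts transitively on $W^*$, exactly as $H(W)$ does.

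The main obstacle is to rule out this second alternative, namely the ``product'' configuration in which $C_H^\circ(V)\ne 1$ and $H$ carries two independent central tori realizing $V$ and $W$ as distinct weight spaces. To close it I would run the rigidity symmetrically: the same analysis with the roles of $V$ and $W$ exchanged shows that a nontrivial $C_H^\circ(V)$ must coincide with $H(W)$ — both are one-dimensional, connected, central, and act on $W$ as $E_W^\times$ — and likewise relates $C_H^\circ(W)$ to $H(V)$, reducing everything to deriving a contradiction from $H=H(V)*H(W)$ with $V\ne W$. Here I would exploit the faithfulness of $H$ on $U$ together with the fixed-point-free rigidity above, matching each element of $H(V)$ with an element of $H(W)$ having the same action on both $V$ and $W$ so that the two central tori cannot be independent; this forces $H(V)=H(W)$ and $C_H^\circ(V)=C_H^\circ(W)=1$, whence $H=H(V)=H(W)$. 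I expect this final identification, rather than the preparatory reductions, to require the most care, as it is precisely the point at which the global structure of the $\pstar$-Moufang set — and not merely the local centralizer calculus — must be brought to bear.
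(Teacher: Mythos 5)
Your preparatory reductions are sound and in fact recover the paper's opening step: observing that $C_U(h)=V$ for every $h\in C_H(V)^*$ (via the $\pstar$-hypothesis, Corollary~\ref{cor.MaxRoot}, and faithfulness of $H$ on $U$) yields $C_H(V)\cap C_H(W)=1$ and the free action of $C_H(V)$ on $W^*$, which is essentially how the paper shows $H$ is abelian. But the argument breaks down exactly where you flag it. First, the dichotomy ``the image of $C_H^\circ(V)$ in $E_W^\times$ is trivial or all of $E_W^\times$'' presupposes that the multiplicative group of an interpretable algebraically closed field has no proper infinite definable connected subgroup; this is the bad field problem, and bad fields of characteristic $0$ do exist, so you cannot conclude that a nontrivial $C_H^\circ(V)$ acts transitively on $W^*$. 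Second, and more seriously, even granting transitivity, the identification $C_H^\circ(V)=H(W)$ does not follow from both groups being connected, central, and inducing $E_W^\times$ on $W$ --- that gives equality only modulo $C_H(W)$ --- and the residual configuration $H=H(V)*H(W)$, a rank-two torus with $V$ and $W$ as independent weight spaces, is not contradicted by ``matching elements with the same action'': nothing in the centralizer calculus you have assembled excludes such an action, which is perfectly consistent as an abstract linear representation.

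The missing ingredient is the $\mu$-maps. The paper sets $A:=C_H(V)C_H(W)$ and notes that $A$ is central in $G_{\{0,\infty\}}$: indeed $G_{\{0,\infty\}}=\langle H,\mu_v\rangle$ for any $v\in V^*$, $H$ is already known to be abelian, and $h\in C_H(V)$ gives $\mu_v^{h}=\mu_{vh}=\mu_v$, so $C_H(V)\le Z(G_{\{0,\infty\}})$ (symmetrically for $C_H(W)$ using $\mu_w$). On the other hand, since $V$ is projective, $\mu_v$ inverts $H(V)$ modulo $C_H(V)$, because the $\mu$-maps invert the Hua subgroup of $\psl_2$. Writing $a\in A$ as $a\in hC_H(V)$ with $h\in H(V)$ and conjugating by $\mu_v$ gives $aC_H(V)=a^{-1}C_H(V)$, so the image of $A$ in $H/C_H(V)$ has exponent $2$, hence order at most $2$, as $H/C_H(V)$ is a quotient of $\fieldF^\times$ by a central subgroup of order at most $2$. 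Combined with the symmetric statement for $W$ and $C_H(V)\cap C_H(W)=1$, this gives $|A|\le 4$, whence $C_H^\circ(V)=C_H^\circ(W)=1$ and $H=H(V)=H(W)$. It is precisely this interplay --- the $\mu$-maps centralize $C_H(V)$ but invert $H(V)$ --- that kills the product configuration, and it is absent from your proposal; without it (or some substitute exploiting the Moufang set structure beyond the action of $H$ on $U$), the final step cannot be closed.
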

\begin{proof}
We begin by showing that $H$ is abelian. By Lemma~\ref{lem.HNormalRoot}, we have that $H = H(V)*C^\circ_H(V)= H(W)*C^\circ_H(W)$ with both $H(V)$ and $H(W)$ abelian. We claim that $C_H(V) \cap C_H(W) = 1$. If not, there is an $h\in H^*$ with $C_U(h)$ containing both $V$ and $W$. Since, by assumption, $C_U(h)$, $V$, and $W$ are all projective, Corollary~\ref{cor.MaxRoot} implies that $C_U(h)=V=W$, which is a contradiction. We conclude that $C_H(V) \cap C_H(W) = 1$, so $H$ embeds into $H/C_H(V) \times H/C_H(W)$. Thus, $H$ is abelian.

To prove the proposition, we must show that $C_H(V)$ and $C_H(W)$ are both finite. Set $A = C_H(V)C_H(W)$. We claim that $A$ is central in $G_{\{0,\infty\}}$. To see this, first recall that $G_{\{0,\infty\}}$ is generated by $H$, which is abelian, and any $\mu$-map. As $C_H(V)$ is centralized by every $\mu_v$ with $v\in V^*$, $C_H(V) \le Z(G_{\{0,\infty\}})$. Similarly, we find that $C_H(W) \le Z(G_{\{0,\infty\}})$, so $A \le Z(G_{\{0,\infty\}})$. We now show that $A$ has at most four elements. Let $a \in A$ be arbitrary. Then  $a \in hC_H(V)$ for some $h\in H(V)$. For any $v \in V^*$, we have that \[aC_H(V)= (aC_H(V))^{\mu_v} = (hC_H(V))^{\mu_v}  = h^{-1}C_H(V) = a^{-1}C_H(V), \] where the second to last equality uses that  $V$ is projective and the $\mu$-maps invert the Hua subgroup in $\psl_2$. Thus, $C_H(V)$ has index at most $2$ in $A$, and a similar argument shows that the same is true for $C_H(W)$. Since $C_H(V) \cap C_H(W) = 1$, we conclude that $A$ has order at most four.
\end{proof}

\section{When $G$ has odd type}\label{sec.OddType}
This short section begins our analysis of Moufang sets whose little projective group has odd type. As before, we  fix some notation. 

\begin{setup}
$\mut$ is an infinite (not necessarily proper) Moufang set of finite Morley rank. Let $G$ be the little projective group, $H$ the Hua subgroup, and $X:=U \cup \{\infty\}$. Further, assume that $G$ is of odd type.
\end{setup}

The following lemma makes use of a ``generosity argument.'' A definable subgroup of a group of finite Morley rank is said to be \textdef{generous} if the union of the conjugates of the subgroup is generic, i.e. has full rank in the group. A necessary condition for being generous is to have ``enough'' conjugates, and we call a subgroup \textdef{almost self-normalizing} if it has finite index in its normalizer. An important example of a generous subgroup in any connected group of finite Morley rank is the connected component of the centralizer of a decent torus, see \cite[IV, ~Lemma~1.14]{ABC08}. Recall that a \textdef{decent torus} is a divisible abelian group of finite Morley rank that is the definable hull of its torsion subgroup.

\begin{lemma}\label{lem.GenZassenhaus}
If $C_X(g)$ is finite for all $g\in G^*$, then $G_\infty$ has odd type. If, additionally, $H$ is nontrivial, then  $H$ has odd type as well.
\end{lemma}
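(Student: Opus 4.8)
The plan is to show, by a rank computation, that the one-point stabilizers (and later the two-point stabilizers) are generous, and then to feed this into the theory of decent tori. Throughout, recall that since $G$ acts $2$-transitively we have $X \cong G/G_\infty$, so $\rk(X) = \rk(G) - \rk(G_\infty)$; moreover $G_\infty$ is infinite (it contains the root group $U_\infty \cong U$), so $\rk(X) < \rk(G)$. I will also use that $H = G_{0,\infty}$ is definable and connected.

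First I would consider the definable incidence set
\[ Z := \{(x,g) \in X \times G : g\cdot x = x\}. \]
Projecting to $X$, the fibre over $x$ is the stabilizer $G_x$, a conjugate of $G_\infty$, so $\rk(Z) = \rk(X) + \rk(G_\infty) = \rk(G)$. Projecting instead to $G$, the fibre over $g$ is $C_X(g)$, which by hypothesis is finite for every $g \in G^*$, while the fibre over $1$ is all of $X$ and has rank $\rk(X) < \rk(G)$. Hence the special fibre over the identity is negligible and the image $\bigcup_{a \in G} G_\infty^{\,a}$ carries the full rank $\rk(G)$; being a definable union of conjugates of full rank, it is generic, so $G_\infty$ is generous in $G$. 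Now I would invoke the theory of decent tori: a generous definable subgroup of a connected group of finite Morley rank contains a conjugate of a maximal decent torus $T$ of $G$ (this is the converse direction to the generosity of $C_G^\circ(T)$ recalled above). Since $G$ has odd type, its maximal $2$-torus is nontrivial and lies in $T$, so $G_\infty$ contains a nontrivial $2$-torus; as $G$, and hence $G_\infty$, has no $2$-unipotent subgroups, the connected component of a Sylow $2$-subgroup of $G_\infty$ is a nontrivial $2$-torus, i.e. $G_\infty$ has odd type.

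For the second assertion I would repeat the argument one level down. Consider
\[ Z_2 := \{(x,y,g) \in X \times X \times G : x \ne y,\ g\cdot x = x,\ g\cdot y = y\}. \]
By $2$-transitivity the ordered distinct pairs form a single orbit $\cong G/H$ of rank $\rk(G) - \rk(H)$, and the fibre of $Z_2$ over such a pair is $G_{x,y} \cong H$, whence $\rk(Z_2) = \rk(G)$. Projecting to $G$, the fibre over $g \in G^*$ consists of ordered pairs of distinct points of the finite set $C_X(g)$ and so is finite, while the fibre over $1$ has rank $\rk(G) - \rk(H)$. Precisely because $H$ is nontrivial—and therefore infinite, being connected—this latter rank is strictly less than $\rk(G)$, so the identity fibre is again negligible and $\bigcup_{a \in G} H^{a}$ is generic. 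Thus $H$ is generous, and the decent-torus argument of the previous paragraph applies verbatim to conclude that $H$ has odd type.

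The routine rank arithmetic is straightforward; the one point requiring care in each computation is verifying that the fibre over the identity does not dominate, which is exactly where the infinitude of $G_\infty$ (respectively the nontriviality of $H$) enters and explains the extra hypothesis in the second statement. The genuine external input—and thus the expected main obstacle—is the passage from generosity to a decent torus, namely the fact that a generous definable subgroup of a connected group contains a conjugate of a maximal decent torus; everything else is bookkeeping with the $2$-transitive orbit–stabilizer rank formula.
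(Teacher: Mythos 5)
Your generosity computations are correct and are in fact a clean alternative to the paper's route: the paper instead shows that the relevant stabilizer $A$ is almost self-normalizing (a connected subgroup normalizing $A$ must fix the finite set $C_X(A)$ pointwise) and then invokes the generosity criterion of \cite[IV,~Lemma~1.25]{ABC08}, whereas your incidence-set argument gets generosity of $G_\infty$ and of $H$ directly from additivity of rank. Both routes use the hypothesis that $C_X(g)$ is finite for $g\neq 1$ in the same essential way, and your bookkeeping (including the role of the nontriviality, hence infinitude, of the connected group $H$) is accurate.

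The gap is the step you yourself flag as the ``genuine external input'': the claim that a definable generous subgroup of a connected group of finite Morley rank must contain a conjugate of a maximal decent torus. You assert this as ``the converse direction'' to the generosity of $C^\circ_G(T)$, but it is not a formal converse, you give no proof and no precise citation, and it is not the mechanism the paper uses. What generosity of $A$ and of $C^\circ_G(T)$ actually buys, since $G$ is connected and hence of degree one, is only that the two generic unions $\bigcup_g A^g$ and $\bigcup_g C^\circ_G(T^g)$ meet in a generic set, so that $A\cap C^\circ_G(T^g)$ contains some nontrivial element $a$ for some $g$; getting from ``$a$ centralizes $T^g$'' to ``$T^g\leq A$'' requires an additional argument, and abstractly one only obtains $T^g\leq C_G(a)$, not $T^g\leq A$. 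The paper closes exactly this distance using the hypothesis a second time: $T^g$ is a connected group acting on the finite set $C_X(a)$, so it fixes $C_X(a)$ pointwise; since the one or two points whose pointwise stabilizer is $A$ lie in $C_X(a)$, it follows that $T^g\leq A$. If you replace your black-box appeal with this short argument (which uses only ingredients you already have on the table), your proof is complete; as written, it rests on an unestablished general principle.
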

\begin{proof}
Define $A$ to be $H$ if $H$ is nontrivial and $G_\infty$ otherwise. We show that $A$ has odd type.

Let $T$ be the definable hull of a nontrivial $2$-torus of $G$. Then $T$ is a decent torus, and $C^\circ_G(T)$ is generous in $G$. We now work to establish that $A$ is generous in $G$, and we begin by showing that $A$ is almost self-normalizing. Let $N := N^\circ_G(A)$. Then $N$ acts on $C_X(A)$, which is a nonempty finite set. As $N$ is connected,  $N$ fixes $C_X(A)$. Since $A$ is the full pointwise stabilizer of one or two points of $C_X(A)$, it must be that $N \le A$, so $A$ is almost self-normalizing. Now our assumption that $C_X(a)$ is finite for all $a\in A^*$ implies the second condition of \cite[IV,~Lemma~1.25]{ABC08}, so  $A$ is generous.

We conclude that $\bigcup_{g\in G}C^\circ_{G}(T^g)$ and $\bigcup_{g\in G}A^g$ have a nontrivial intersection, so $C^\circ_{G}(T^g) \cap A \neq 1$ for some $g \in G$. Choose a nontrivial $a\in C^\circ_{G}(T^g) \cap A$. Now, $T^g$ is a connected group acting on the finite set $C_X(a)$. As before, $T^g$ fixes $C_X(a)$, so $T^g\le A$. 
\end{proof}

The previous lemma allows us to extend \cite[Propostion~3.9]{WiJ11} to Moufang sets for which $U$ need not be abelian. The proof uses the following basic, but absolutely essential, fact about centralizers of involutions in groups of finite Morley rank.

\begin{fact}[{\cite[I,~Lemma~10.3]{ABC08}}]\label{fact.fixedPointsInvolution}
If a connected group of finite Morley rank has a definable involutory automorphism $\alpha$ with finitely many fixed points, then the group is abelian and $\alpha$ is inversion. 
\end{fact}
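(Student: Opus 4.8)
The plan is to prove this by a twisted translation map combined with genericity arguments, following the classical approach to inverting automorphisms. Write $g^\alpha$ for the image of $g$ under $\alpha$, so that the hypothesis says $C_G(\alpha) = \{g \in G : g^\alpha = g\}$ is finite. Consider the definable map $\theta \colon G \to G$ given by $\theta(g) = g^{-1} g^\alpha$. If $\theta(g) = \theta(h)$, then $g^{-1} g^\alpha = h^{-1} h^\alpha$, which rearranges to $hg^{-1} = (hg^{-1})^\alpha$, so $hg^{-1} \in C_G(\alpha)$; thus the fibers of $\theta$ are right cosets of the finite group $C_G(\alpha)$. Consequently $\theta$ is finite-to-one, its image has rank $\rk(G)$, and since $G$ is connected the image is generic.

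Next I would record that $\alpha$ inverts every element in the image of $\theta$: using $\alpha^2 = 1$, one computes $\theta(g)^\alpha = (g^{-1}g^\alpha)^\alpha = (g^\alpha)^{-1} g = \theta(g)^{-1}$. Hence the definable set $I := \{x \in G : x^\alpha = x^{-1}\}$ contains the image of $\theta$ and is therefore generic. The heart of the argument is to upgrade ``$\alpha$ inverts a generic set'' to ``$G$ is abelian.'' The key local computation is that whenever $x$, $y$, and $xy$ all lie in $I$, then $x^{-1}y^{-1} = x^\alpha y^\alpha = (xy)^\alpha = (xy)^{-1} = y^{-1}x^{-1}$, so $x$ and $y$ commute.

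To exploit this, fix any $x \in I$. Both $I$ and its translate $x^{-1}I$ are generic, and because $G$ is connected (hence of degree one) the intersection $I \cap x^{-1}I$ is again generic. For every $y$ in this intersection we have $y \in I$ and $xy \in I$, so the local computation gives $y \in C_G(x)$. Thus $C_G(x)$ contains a generic set, so the definable subgroup $C_G(x)$ is generic, has finite index, and equals $G$ by connectedness. Therefore $I \subseteq Z(G)$; as $I$ is generic, $Z(G)$ is generic and hence $G$ is abelian.

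Finally, with $G$ abelian the set $I$ is closed under products (for $x,y \in I$, $(xy)^\alpha = x^\alpha y^\alpha = x^{-1}y^{-1} = (xy)^{-1}$) and under inverses, so $I$ is a generic definable subgroup of the connected group $G$; hence $I = G$, which says precisely that $\alpha$ is inversion. I expect the only delicate points to be the genericity bookkeeping — that the image of a finite-to-one definable map is generic, that the intersection of two generic sets is generic in a connected (degree one) group, and that a generic definable subgroup of a connected group is the whole group — each of which is standard in the finite Morley rank setting; the genuinely substantive step is the passage from ``$\alpha$ inverts a generic set'' to commutativity via the three-element computation above.
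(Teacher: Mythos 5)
Your argument is correct and is exactly the standard proof of this classical result, which the paper simply cites as a black box from \cite[I,~Lemma~10.3]{ABC08} without reproducing a proof: the twisted translation map $g \mapsto g^{-1}g^\alpha$ has fibers that are cosets of the finite fixed-point group, so its image is generic and is inverted by $\alpha$, and the degree-one genericity bookkeeping you describe (finite-to-one images are generic, generic sets in a connected group intersect generically, a generic definable subgroup is everything) is all standard and correctly applied. The three-element commutation computation and the final upgrade from ``inverts a generic set'' to ``inversion on all of $G$'' are also exactly as in the source.
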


\begin{corollary}[Zassenhaus Moufang sets]\label{cor.FiniteFixed}
If $\mut$ is proper and $C_U(h)$ is finite for all $h\in H^*$, then $\mut \cong \Mouf(\fieldF)$ for some algebraically closed field $\fieldF$.
\end{corollary}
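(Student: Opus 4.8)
The plan is to show that the root group $U$ is abelian and then fall back on the already-established classification in the proper abelian case. First I would observe that, since $\mut$ is proper, $G$ is not sharply $2$-transitive, so the Hua subgroup $H$ is nontrivial. The key preliminary point is that the hypothesis on $H$ secretly supplies the hypothesis of Lemma~\ref{lem.GenZassenhaus}: because $G$ acts $2$-transitively on $X$, any $g \in G^*$ fixing at most one point has $|C_X(g)| \le 1$, while any $g$ fixing two points lies in some $2$-point stabilizer and is therefore conjugate to a nontrivial element $h$ of $H$; for such $h$ one has $C_X(h) = \{\infty\} \cup C_U(h)$, which is finite by assumption. Hence $C_X(g)$ is finite for every $g \in G^*$.

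With $H \ne 1$ and the finiteness of all $C_X(g)$ in hand, Lemma~\ref{lem.GenZassenhaus} yields that $H$ has odd type, and in particular $H$ contains an involution $i$. I would then let $i$ act on $U$ by conjugation. Since $U$ is (definable and) connected, $i$ is a definable involutory automorphism of $U$; it is genuinely involutory rather than trivial, for otherwise $C_U(i) = U$ would be infinite, contradicting the hypothesis. Its fixed-point set is exactly the finite group $C_U(i)$, so Fact~\ref{fact.fixedPointsInvolution} applies and forces $U$ to be abelian with $i$ acting as inversion.

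At this stage $\mut$ is a proper Moufang set of finite Morley rank of odd type with abelian root groups, still satisfying the hypothesis that $C_U(h)$ is finite for all $h \in H^*$, and the conclusion $\mut \cong \Mouf(\fieldF)$ is precisely the abelian instance treated in \cite{WiJ11} (see also \cite{DMeTe08}); this corollary is exactly its extension to the case that $U$ need not be abelian, so the final step is a citation. I expect the genuinely new content to be the two bridging steps, namely translating the finiteness hypothesis from $H$ to all of $G$ so that Lemma~\ref{lem.GenZassenhaus} applies, and checking that $i$ acts nontrivially on $U$ before invoking Fact~\ref{fact.fixedPointsInvolution}. Neither bridging step is deep, since the real engine, the generosity/decent-torus argument producing odd type inside $H$, is already isolated in Lemma~\ref{lem.GenZassenhaus}. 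Thus the main obstacle is conceptual rather than technical: recognizing that this Zassenhaus-type fixed-point hypothesis is strong enough to deposit a suitable involution in $H$, from which the inversion of $U$ is immediate.
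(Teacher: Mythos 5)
Your proposal is correct and follows essentially the same route as the paper: invoke Lemma~\ref{lem.GenZassenhaus} to place an involution in $H$, use Fact~\ref{fact.fixedPointsInvolution} to conclude that $U$ is abelian, and then cite the abelian case (the paper appeals to \cite[Proposition~3.9]{WiJ10} via Fact~\ref{fact.Dichotomy}). The only difference is that you spell out the bridging step from ``$C_U(h)$ finite for all $h\in H^*$'' to ``$C_X(g)$ finite for all $g\in G^*$,'' which the paper leaves implicit.
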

\begin{proof}
It suffices to show that $U$ is abelian and then appeal to \cite[Propostion~3.9]{WiJ11}, by way of Fact~\ref{fact.Dichotomy}. By Lemma~\ref{lem.GenZassenhaus},  $H$ contains an involution, and this involution acts on the connected group $U$ with finitely many fixed-points. Hence, $U$ is abelian.
\end{proof}

We will, on several occasions, make use of the following result  about degenerate type groups.

\begin{fact}[{\cite[Theorem~1]{BBC07}}]\label{fact.BBC}
A connected degenerate type group of finite Morley rank has no involutions.
\end{fact}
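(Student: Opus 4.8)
This is the Borovik--Burdges--Cherlin theorem, and I would organize a proof around a minimal counterexample together with the generosity machinery already in play. Suppose, for contradiction, that $G$ is a connected group of finite Morley rank of degenerate type containing an involution $i$, and take $\rk G$ minimal among all such $G$.

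The first move is to dispose of the case where the centralizer $C_G(i)$ is finite. Conjugation by $i$ is a definable involutory automorphism of the connected group $G$ whose fixed-point set is exactly $C_G(i)$; were this finite, Fact~\ref{fact.fixedPointsInvolution} would force $G$ to be abelian with $i$ acting as inversion. But $i$ is an involution of the abelian group $G$, hence central, so $C_G(i) = G$ is infinite --- a contradiction. Thus $C_G(i)$, and with it $C := C^\circ_G(i)$, is infinite. Note also that $C$ inherits degenerate type, since a Sylow $2$-subgroup of $C$ lies inside the (finite) Sylow $2$-subgroup of $G$.

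The intended engine of the argument is then the same decent-torus generosity used in Lemma~\ref{lem.GenZassenhaus}. Fix a maximal decent torus $T$ of $G$; by \cite[IV,~Lemma~1.14]{ABC08} the subgroup $C^\circ_G(T)$ is generous. The goal is to show that the centralizer of our involution is \emph{also} generous, so that, exactly as in Lemma~\ref{lem.GenZassenhaus}, the generic sets $\bigcup_{g\in G} C^\circ_G(T^g)$ and $\bigcup_{g\in G} C^g$ must meet; this places a conjugate $T^g$ inside $C$, i.e.\ forces $i$ to centralize a nontrivial decent torus $T^g$. One then passes to $C^\circ_G(T^g)$: if it is a proper subgroup of $G$ it is a smaller connected degenerate-type group still containing an involution, contradicting minimality; if it is all of $G$ then $T^g$ is central and one analyses $G/T^g$ to recover $2$-torsion and hence a $2$-torus, against degeneracy. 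Either way the degenerate hypothesis is violated.

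The hard part --- and the genuine content of the cited paper --- is establishing the generosity of the involution centralizer in the absence of any $2$-toral or $2$-unipotent anchor. Running the Lemma~\ref{lem.GenZassenhaus} template requires $C_G(i)$ to be almost self-normalizing and to satisfy the covering hypothesis of \cite[IV,~Lemma~1.25]{ABC08}, and neither is automatic here: $i$ need not be the unique involution centralized by $N_G(C_G(i))$, and $i$ itself need not lie in the connected component $C$, which blocks a naive induction on $\rk C$. Overcoming this is what forces the full signalizer-functor analysis of Borovik--Burdges--Cherlin --- one builds a connected nilpotent signalizer functor on the involutions of $G$, verifies balance using minimality to exclude involutions in proper definable sections, and invokes the Signalizer Functor Theorem to produce a nontrivial definable invariant subgroup whose large normalizer yields the contradiction. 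I would therefore expect the elementary reductions above to be only the outer shell, with the construction and verification of the functor being the real obstacle.
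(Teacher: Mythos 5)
The statement you are trying to prove is not proved in the paper at all: it appears as a \emph{Fact}, imported as a black box from Borovik--Burdges--Cherlin \cite[Theorem~1]{BBC07}, so there is no in-paper argument to compare yours against. Judged on its own terms, your proposal is an outline of a proof strategy rather than a proof. You correctly identify the theorem, and the opening reduction (using Fact~\ref{fact.fixedPointsInvolution} to rule out a finite $C_G(i)$, since an abelian $G$ would make $i$ central) is sound and non-circular. But everything after that is deferred: you state yourself that the almost-self-normalization and covering hypotheses needed to run the Lemma~\ref{lem.GenZassenhaus} generosity template are ``not automatic here,'' and you then invoke ``the full signalizer-functor analysis of Borovik--Burdges--Cherlin'' by name without constructing the functor, verifying balance or completeness, or explaining how the resulting invariant subgroup produces the final contradiction. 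That analysis is the entire content of the theorem, so the proposal as written does not establish the statement.

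There is also a concrete flaw in the outer shell. Your engine is the generosity of $C^\circ_G(T)$ for $T$ a maximal decent torus, but in a degenerate type group a maximal decent torus may perfectly well be trivial: the hypothesis only gives you a finite Sylow $2$-subgroup, and a finite $2$-group is not divisible, so the involution $i$ does not supply a nontrivial decent torus (contrast Lemma~\ref{lem.GenZassenhaus}, where odd type guarantees a nontrivial $2$-torus to anchor the argument). When $T=1$ the inclusion $C^\circ_G(T^g)\cap C \neq 1$ is vacuous and the intersection-of-generic-sets step yields nothing, so the case split that follows (``$C^\circ_G(T^g)$ proper or all of $G$'') never gets off the ground precisely in the hard case. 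If you wish to cite this result, do as the paper does and quote \cite{BBC07}; a self-contained proof is far beyond what these reductions deliver.
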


With this fact in hand, we now use Lemma~\ref{lem.GenZassenhaus} to expose another class of \hp{} Moufang sets. Recall that we are only considering when the little projective group has odd type.

\begin{corollary}\label{cor.HeredProper}
If $U$ has no involutions, then $\mut$ is \hp{}.
\end{corollary}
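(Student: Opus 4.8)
The plan is to verify the two defining clauses of hereditary properness separately — that $\mut$ is proper, and that every infinite definable root subgroup induces a proper Moufang set — both via a single criterion which I isolate first: \emph{an infinite Moufang set of finite Morley rank whose little projective group has odd type and whose root groups contain no involutions is proper}. To prove this criterion, suppose such a Moufang set were not proper. Then its little projective group is sharply $2$-transitive, so the Hua subgroup is trivial and every nontrivial element fixes at most one point; in particular the stabilizer of $\infty$ coincides with the root group, and $C_X(g)$ is finite for every $g \neq 1$. This is exactly the hypothesis of Lemma~\ref{lem.GenZassenhaus}, which then forces the point stabilizer — hence the root group — to have odd type and therefore to contain involutions, a contradiction.

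Applying this criterion directly to $\mut$ settles properness: $G$ has odd type by the standing assumption, and the root group $U$ is involution-free by hypothesis, so $\mut$ is proper.

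For the second clause, fix an infinite definable root subgroup $V$ of $U$. By Fact~\ref{Fact_InterpretInduced}, $V$ induces an interpretable infinite Moufang set of finite Morley rank whose root group is isomorphic to $V$, and hence involution-free since $V \le U$. By the criterion it suffices to show that the induced little projective group $\bar G = G(V)/C_{G(V)}(V)$ has odd type. Being connected and $2$-transitive, $\bar G$ is of odd or even type by \cite[Lemma~5.8]{BoCh08}, so I must exclude even type, i.e. rule out a nontrivial $2$-unipotent subgroup of $\bar G$. This is where the ambient odd-type hypothesis enters: since $G$ has odd type it has no nontrivial $2$-unipotent subgroup, and hence neither does its subgroup $G(V)$ nor the kernel $C_{G(V)}(V)$ of its action on $Y := V \cup \{\infty\}$. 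Pushing this absence through the definable quotient should yield that $\bar G$ has no nontrivial $2$-unipotent subgroup, whence $\bar G$ is of odd type and the criterion applies.

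The main obstacle is precisely this last transfer across the quotient by $C_{G(V)}(V)$. A nontrivial $2$-unipotent subgroup of $\bar G$ lifts to a definable subgroup of $G(V)$ whose connected component maps onto it with kernel contained in $C_{G(V)}(V)$; since that kernel lies in the odd-type group $G$, it has trivial $2$-unipotent part, so standard Sylow and unipotence theory for groups of finite Morley rank lets one lift the $2$-unipotent subgroup back into $G(V) \le G$, contradicting that $G$ has odd type. This step is where the hypothesis that the \emph{entire} little projective group has odd type — rather than merely that the root groups avoid involutions — is indispensable: the example of $\aff_1(K)$ for $K$ algebraically closed of characteristic $2$ is an even-type sharply $2$-transitive (hence nonproper) Moufang set whose root groups, being isomorphic to $K^\times$, contain no involutions, so some global odd-type input is genuinely needed to exclude such an induced little projective group.
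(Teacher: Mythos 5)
Your proof is correct and follows essentially the same route as the paper: both arguments rest on Lemma~\ref{lem.GenZassenhaus} applied first to $\mut$ and then to the Moufang set induced by an arbitrary infinite definable root subgroup, after checking that the induced little projective group has odd type. The only (immaterial) divergence is in that last check: the paper observes the induced group is a section of $G$, hence of odd or degenerate type, and excludes degenerate type via Fact~\ref{fact.BBC}, whereas you invoke \cite[Lemma~5.8]{BoCh08} to get odd or even type and exclude even type by the same section/torsion-lifting argument --- both are standard and valid.
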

\begin{proof}
First, Lemma~\ref{lem.GenZassenhaus} forces $\mut$ to be proper. Let $V$ be an infinite definable root subgroup of $U$. As $G$ has odd type, the little projective group of the induced Moufang set has odd or degenerate type, but the latter case is ruled out by Fact~\ref{fact.BBC} and the observation that $2$-transitive groups of finite Morley rank contain involutions. Now assume that the induced Moufang set is not proper. Then the root groups of the induced Moufang set coincide with the $1$-point stabilizers in the induced little projective group. By Lemma~\ref{lem.GenZassenhaus}, the roots groups of the induced Moufang set have odd type. However, the root groups of the induced Moufang set are subgroups of the original root groups, and we have a contradiction.
\end{proof}

The next corollary of Lemma~\ref{lem.GenZassenhaus} shows that \hp{} Moufang sets (in odd type) always have nontrivial projective root subgroups.

\begin{corollary}\label{cor.minimalRootDicotomy}
If $\mut$ is proper, then every minimal infinite definable root subgroup of $U$ is either projective or induces a nonproper Moufang set. 
\end{corollary}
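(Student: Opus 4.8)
The plan is to prove the statement in the equivalent form: if a minimal infinite definable root subgroup $V$ of $U$ induces a \emph{proper} Moufang set, then that induced Moufang set is projective (so that $V$ is a projective root subgroup). The engine will be the Zassenhaus result, Corollary~\ref{cor.FiniteFixed}: it says that a proper odd-type Moufang set in which every nontrivial Hua element has finite centralizer is projective. Thus the whole argument reduces to verifying the hypotheses of that corollary \emph{for the Moufang set $\Mouf(V,\rho)$ induced by $V$}.

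First I would record the standing data about $\Mouf(V,\rho)$. By Fact~\ref{Fact_InterpretInduced} it is interpretable, with $G(V)$ definable and connected, so its little projective group $G(V)/C_{G(V)}(V)$ is definable and connected. The type of this group must be pinned down before Corollary~\ref{cor.FiniteFixed} can be invoked: arguing exactly as in Corollary~\ref{cor.HeredProper}, a definable connected section of the odd-type group $G$ is of odd or degenerate type, and the degenerate possibility is excluded because $G(V)/C_{G(V)}(V)$ acts $2$-transitively on $Y=V\cup\{\infty\}$ and therefore contains involutions, whereas a connected degenerate type group has none by Fact~\ref{fact.BBC}. Hence $\Mouf(V,\rho)$ satisfies the Setup of this section, and, being proper by assumption, has nontrivial Hua subgroup (a proper Moufang set is not sharply $2$-transitive).

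The crux is to show that every nontrivial element $h$ of the Hua subgroup of $\Mouf(V,\rho)$ has finite centralizer on $V$. Here I would use that $C_V(h)$ is a root subgroup of the induced Moufang set, and hence, by transitivity of the root subgroup relation, a definable root subgroup of $U$ contained in $V$. Minimality of $V$ then forces $C_V(h)$ to be finite or equal to $V$; but $C_V(h)=V$ would mean that $h$ fixes every point of $V\cup\{\infty\}$, contradicting the faithfulness of the action of $G(V)/C_{G(V)}(V)$ on $Y$ for $h\neq 1$. So $C_V(h)$ is finite for every nontrivial Hua element, and Corollary~\ref{cor.FiniteFixed} yields $\Mouf(V,\rho)\cong\Mouf(\fieldF)$ for an algebraically closed field $\fieldF$, i.e. $V$ is projective.

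The step I expect to require the most care is the finiteness of the Hua-centralizers: it is exactly where minimality of $V$ is consumed, through the transitivity of the root subgroup relation, and where one must remember to rule out $C_V(h)=V$ via faithfulness. The determination that the induced little projective group is of odd type is a subtlety that must be handled honestly — one cannot simply assume a section of $G$ inherits odd type without excluding degenerate type — but it is routine given Corollary~\ref{cor.HeredProper} and Fact~\ref{fact.BBC}. Everything downstream is a direct appeal to the already-established Zassenhaus corollary.
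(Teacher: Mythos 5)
Your proposal is correct and follows essentially the same route as the paper: both reduce to verifying the hypotheses of Corollary~\ref{cor.FiniteFixed} for the induced Moufang set, with minimality of $V$ forcing the Hua-centralizers $C_V(h)$ to be finite because they are root subgroups of $U$ contained in $V$. The only cosmetic differences are that you invoke transitivity of the root subgroup relation where the paper writes $C_V(h)=V\cap C_U(h)$ as an intersection of root subgroups, and that you spell out the odd-type verification (via Fact~\ref{fact.BBC}) that the paper leaves implicit in the phrase ``it is not hard to see.''
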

\begin{proof}
Assume that $\mut$ is proper. Let $V$ be a minimal infinite definable root subgroup, and let $\Mouf'$ be the Moufang set induced by $V$. Suppose that $\Mouf'$ is proper. It is not hard to see that Corollary~\ref{cor.FiniteFixed} applies to $\Mouf'$, so $\Mouf'$ is projective. Indeed, towards a contradiction, suppose that some nontrivial element in the Hua subgroup of $\Mouf'$ fixes infinitely many points of $V$. Then there is an $h \in H$ such that $C_V(h)$ is infinite and properly contained in $V$. As the intersection of root subgroups is again a root subgroup, $C_V(h) = V \cap C_U(h)$ is a root subgroup, and we have contradicted the minimality of $V$. We conclude that $\Mouf'$ is projective.
\end{proof}

We end this section with a final corollary of Lemma~\ref{lem.GenZassenhaus} showing that the structure of \hp{} Moufang sets in odd type resembles that of $\psl_2$ in characteristic not $2$. The result will be used often. Notice that the corollary also shows, in combination with Corollary~\ref{cor.HeredProper}, that (in odd type) $\mut$ is \hp{} if and only if $U$ has no involutions. 

\begin{corollary}\label{cor.UHstruct}
If $\mut$ is \hp{}, then $H$ has odd type, and $U$ contains no decent torus and, hence, no involutions.
\end{corollary}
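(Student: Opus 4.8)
The plan is to treat the three assertions in turn, the middle one being the crux.

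That $H$ has odd type is almost immediate from the machinery already in place. Since $U$ is itself an infinite definable root subgroup of $U$, we may choose a minimal infinite definable root subgroup $V$; as $\mut$ is \hp{} (hence proper) and every infinite definable root subgroup induces a proper Moufang set, Corollary~\ref{cor.minimalRootDicotomy} forces $V$ to be projective. By Proposition~\ref{prop.NHV} the Hua subgroup of the induced Moufang set is $H(V) \cong \fieldF^\times$ for an algebraically closed field $\fieldF$, and arguing exactly as in Corollary~\ref{cor.HeredProper} the induced little projective group $\psl_2(\fieldF)$ is of odd type, so $\charac\fieldF \neq 2$ and $\fieldF^\times$ contains a nontrivial $2$-torus. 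As $H(V) \le H$, $H$ is connected, and $G$ has odd type (so no subgroup of $G$ has a nontrivial $2$-unipotent part), we conclude that $H$ is of odd type.

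The heart of the matter is showing $U$ contains no nontrivial decent torus, which I would do by a fixed-point count on $X$. The basic observation is that $U = U_\infty$ acts regularly on $X \setdiff \{\infty\}$, so $C_X(u) = \{\infty\}$ for every $u \in U^*$; hence any nontrivial subgroup $S \le U$ satisfies $C_X(S) = \{\infty\}$, a single point. On the other hand, I claim every maximal decent torus of $G$ fixes at least two points of $X$. Granting this, a nontrivial decent torus $S \le U$ would lie in a maximal decent torus $T'$ of $G$, and then $C_X(T') \subseteq C_X(S) = \{\infty\}$ would have at most one point, while $|C_X(T')|$ equals the conjugacy-invariant number of fixed points of a maximal decent torus, which is at least two --- a contradiction. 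To establish the claim it suffices, by conjugacy of maximal decent tori, to exhibit one maximal decent torus of $G$ inside the two-point stabilizer $H = G_{0,\infty}$, since such a torus fixes both $0$ and $\infty$. Let $T_1$ be a maximal decent torus of $H$, which is nontrivial because $H$ has odd type. The key technical point is that $C_U(T_1)$ is finite: were it infinite, it would be an infinite definable root subgroup and hence, by \hp{}, would contain a projective root subgroup $W$ (via transitivity of the root subgroup relation and Corollary~\ref{cor.minimalRootDicotomy}); since $T_1$ centralizes $W$ we would have $T_1 \le C_H(W)$, and then $T_1 H(W)$ would be a decent torus of $H$ properly containing $T_1$ (here $H(W) \cong \fieldF^\times$ is an infinite decent torus meeting $C_H(W)$ in the finite group $Z(G(W))$ by Proposition~\ref{prop.NHV}), contradicting the maximality of $T_1$. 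With $C_U(T_1)$ finite, $C_X(T_1) = \{0,\infty\} \cup C_U(T_1)$ is finite, so the connected group $C_G^\circ(T_1)$ fixes it pointwise and thus $C_G^\circ(T_1) \le G_{0,\infty} = H$. Any maximal decent torus $T$ of $G$ containing $T_1$ is abelian and connected, so it lies in $C_G^\circ(T_1) \le H$, whence $T = T_1$ by maximality of $T_1$ in $H$; thus $T_1$ is a maximal decent torus of $G$ lying in $H$, as needed.

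Finally, ``no decent torus, hence no involutions'' is a type computation. Because $G$ is of odd type it has no nontrivial $2$-unipotent subgroup, so the connected subgroup $U$ can be neither of even nor of mixed type; thus $U$ is of odd or degenerate type. Having no nontrivial decent torus, $U$ has no nontrivial $2$-torus, so $U$ is not of odd type and must be degenerate, and Fact~\ref{fact.BBC} then yields that $U$ has no involutions. The main obstacle throughout is the finiteness of $C_U(T_1)$ in the second step: this is exactly where the \hp{} hypothesis and the detailed structure of projective root subgroups recorded in Proposition~\ref{prop.NHV} are indispensable, and it is what makes the clean ``one fixed point versus two'' contradiction go through.
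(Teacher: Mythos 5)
Your overall strategy is sound and genuinely different from the paper's at the crucial step. For the claim that $U$ contains no decent torus, the paper argues by induction on $\rk U$: in a counterexample with $U$ of minimal rank, it produces a decent torus $S$ of $H$ whose centralizer $C_U(S)$ is a proper infinite definable root subgroup still containing a (conjugate of the) decent torus, contradicting minimality. You instead count fixed points on $X$: a maximal decent torus of $G$ sitting inside $H$ fixes at least two points, while one containing a hypothetical nontrivial decent torus of $U$ fixes at most one, and conjugacy of maximal decent tori gives the contradiction. Both arguments run on the same engine --- conjugacy of maximal decent tori and the fact that a connected group fixes a finite invariant set pointwise --- and yours is close in spirit to Lemma~\ref{lem.GenZassenhaus}. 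Your treatment of the odd type of $H$ and the final type computation for $U$ match the paper.

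There is, however, one unjustified step: in proving that $C_U(T_1)$ is finite you assert that $H(W) \cong \fieldF^\times$ ``is an infinite decent torus.'' That the multiplicative group of an algebraically closed field of finite Morley rank is the definable hull of its torsion is Wagner's theorem in positive characteristic, but in characteristic $0$ it is not known (this is precisely bad-field territory), and nothing in the paper supplies it; note that Proposition~\ref{prop.NHV} and the paper's own arguments only ever extract a $2$-torus from $H(V)$, never the full decent-torus structure of $\fieldF^\times$. The gap is easily repaired: since $\charac \fieldF \neq 2$, let $S$ be the maximal $2$-torus of $H(W)$ and replace $H(W)$ by its definable hull $\dc(S)$, which is an infinite decent torus contained in the definable group $H(W)$ and which meets $C_H(W)$, hence $T_1$, in a subgroup of the finite group $Z(G(W))$; then $T_1\dc(S)$ is a decent torus of $H$ properly containing $T_1$, and the contradiction with the maximality of $T_1$ goes through. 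With that substitution your proof is complete.
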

\begin{proof}
Assume that $\mut$ is \hp{}, and let $V$ be any minimal infinite definable root subgroup. By Corollary~\ref{cor.minimalRootDicotomy}, $V$ is projective, and Proposition~\ref{prop.NHV} now shows that $H(V)$ contains a $2$-torus. 

To see that $U$ contains no decent torus, we argue by contradiction. Let $\mut$ be a counterexample for which $U$ is of minimal rank among all such counterexamples. Let $T\le U$ be a decent torus. We have already seen that $H$ has odd type, so let $S$ be a decent torus of $H$. By the conjugacy of maximal decent tori in $U\rtimes H$ (see \cite[IV,~Proposition~1.15]{ABC08} for example), $S$ centralizes some conjugate of $T$ which, of course, also lies in $U$. Thus, $W:= C_U(S)$ contains a decent torus. Now, $W$ induces a Moufang set of finite Morley rank $\Mouf'$ that is infinite and hereditarily proper. Further, the little projective group of $\Mouf'$ must contain infinite Sylow $2$-subgroups and is isomorphic to a section of $G$,  so it must have odd type. Since $U$ is connected, the rank of $W$ is less than the rank of $U$, and we have contradicted the minimality of our counterexample. 

Finally, $G$ has odd type, so it must be that the Sylow $2$-subgroups of $U$ are finite as $U$ contains no divisible torsion. Since $U$ is connected, we conclude that $U$ has trivial Sylow $2$-subgroups by Fact~\ref{fact.BBC}.
\end{proof}

\section{$\pstar$-Moufang sets in odd type}\label{sec.PStarOdd}
We now refine our analysis of $\pstar$-Moufang sets for the odd type setting. Of course, the goal, which is not achieved here, is to show that these Moufang sets are each isomorphic to $\Mouf(F)$ some algebraically closed field $F$. We do however show that these Moufang sets fall into one of two extreme cases. Specifically, this section will prove the following.

\begin{theorem}\label{thm.Dicotomy}
Let $\mut$ be an infinite proper $\pstar$-Moufang set of finite Morley rank of odd type. Then either
\begin{enumerate}
\item the root groups are abelian, or
\item the root groups are simple.
\end{enumerate}
Further, if the root groups are simple, then the Hua subgroup $H$ is nonnilpotent with $\pr_2 H = \mr_2 H = 2$, and $H$ leaves invariant at most one proper nontrivial definable subgroup of $U$ which, if it exists, is a root subgroup.
\end{theorem}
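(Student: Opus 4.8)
The plan is to first collect the consequences of the hypotheses. Since a proper $\pstar$-Moufang set is \hp{}, Corollary~\ref{cor.UHstruct} provides that $H$ has odd type while $U$ is connected and has no involutions, and Corollary~\ref{cor.minimalRootDicotomy} (with the heredity of properness) shows that every minimal infinite definable root subgroup is projective. Fix a maximal $2$-torus $T\le H$. As $G$ acts faithfully on $X$, the action of $H$ on $U$ is faithful, so each involution $i\in H$ is a nontrivial definable involutory automorphism of $U$; by Fact~\ref{fact.fixedPointsInvolution} this makes either $U$ abelian (with $i$ inverting it) or $C_U(i)$ an infinite proper definable root subgroup. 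I henceforth assume $U$ nonabelian and aim to show $U$ is simple with $H$ as described.

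The crux, and what I expect to be \textbf{the main obstacle}, is the lemma that every infinite proper definable $H$-invariant subgroup $N\le U$ is a projective root subgroup. The difficulty is that $N$ carries no $\mu$-map structure a priori. My approach is to exploit an involution $i\in H$ acting on the connected, involution-free group $N$: its fixed subgroup $C_N(i)=N\cap C_U(i)$ lies in the root subgroup $C_U(i)$, while the distinguished involution of a projective root subgroup --- the $-1$ of the field $\fieldF^\times\cong H(V)$ furnished by Proposition~\ref{prop.NHV} --- lets me compare $N$ against projective root subgroups using Corollary~\ref{cor.RootAndChar} and then identify $N$ itself through the maximality statement Corollary~\ref{cor.MaxRoot}.

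Granting the lemma, the dichotomy follows by applying it to the characteristic, hence $H$-invariant, subgroups of $U$. Were $U$ nonabelian and not simple, then $Z(U)$ or the derived subgroup, if proper and nontrivial, would be a projective and therefore \emph{abelian} root subgroup, placing $U$ in a solvable configuration that Corollary~\ref{cor.MaxRoot} and Proposition~\ref{prop.TwoHNormalRoot} contradict; the remaining semisimple possibility is excluded because the connected group $H^\circ$ normalizes each simple factor, so the factors are $H(V)$-invariant and the same corollaries force a single factor. Thus $U$ is simple. In this case $H$ is nonnilpotent: otherwise its $2$-torus is central, the centralizers of its central involutions are $H$-invariant proper projective root subgroups, and Lemma~\ref{lem.HNormalRoot} together with Proposition~\ref{prop.TwoHNormalRoot} render this incompatible with $U$ simple and nonabelian. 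As nonnilpotency gives $H$ nonabelian, Proposition~\ref{prop.TwoHNormalRoot} permits at most one infinite proper definable $H$-invariant root subgroup; combined with the lemma --- and with the observation that a finite $H$-invariant subgroup is centralized by $H^\circ$ and meets every projective $V$ trivially since $H(V)$ is transitive on $V^*$ --- this yields the uniqueness clause.

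Finally, I would pin down $\pr_2 H=\mr_2 H=2$ by counting involutions, which I regard as the second hard point. Each projective root subgroup contributes precisely one involution to $H$ (the $-1$ of its field), and the centralizers $C_U(i)$ for $i\in T$ are proper root subgroups whose join is all of $U$ by a generation-by-centralizers argument for the torus $T$. If $\pr_2 H\ge 3$, the resulting surfeit of proper $H$-invariant root subgroups over-determines $U$ against Corollaries~\ref{cor.RootAndChar} and~\ref{cor.MaxRoot} and contradicts simplicity; if $\pr_2 H=1$, the unique involution of $T$ yields an $H$-invariant proper root subgroup returning us to the abelian case. This forces $\pr_2 H=2$, and $\mr_2 H=2$ then follows by showing that every involution of $H$ is conjugate into $T$, once more because its centralizer in $U$ is a projective root subgroup carrying a single involution. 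Supplying the precise generation of $U$ by the centralizers $C_U(i)$ is the principal input for this last step.
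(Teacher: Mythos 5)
Your outline correctly identifies the two pressure points and the right toolkit (Fact~\ref{fact.fixedPointsInvolution}, Corollaries~\ref{cor.RootAndChar} and~\ref{cor.MaxRoot}, four-group generation), but both of the steps you flag as hard have genuine gaps that your proposed tools cannot close. In your main lemma, the case you do not address is the one that actually requires work: a proper nontrivial definable $H$-invariant subgroup $A$ need not be contained in a single $C_U(i)$. Applying Fact~\ref{fact.FourGroup} to $A$ and to $U$ with a Klein four-group $K=\{1,i,j,k\}$ only yields $V_i\le A$ for some $i$ and $V_k\cap A=0$ for some $k$, leaving open the possibility $A=\langle V_i,V_j\rangle=V_i\times V_j$. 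Corollaries~\ref{cor.RootAndChar} and~\ref{cor.MaxRoot} say nothing about such an $A$, since it properly contains the projective root subgroups it is built from. The paper excludes this configuration (Proposition~\ref{prop.OneHInvar}) by an entirely different mechanism: $H$ acts faithfully and irreducibly on $A$, $H(V_k)$ generates an interpretable algebraically closed field $F$ making $A$ two-dimensional, and Poizat's theorem then forces $H'$ to be solvable or $\ssl_2(F)$; freeness of $C_H(V_k)$ kills the $\ssl_2$ option, and the Borel-subgroup argument contradicts irreducibility. The same machinery, run on the quotient $U/V$ together with the rank bound $\rk U\ge\rk H$ (Lemma~\ref{lem.RankBound}), is what rules out the ``solvable configuration'' in the simple-or-abelian dichotomy (Corollary~\ref{cor.SimpleAbelian}); your assertion that Corollary~\ref{cor.MaxRoot} and Proposition~\ref{prop.TwoHNormalRoot} already contradict it is not substantiated and, as far as I can see, false without this extra input.

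Your treatment of $\pr_2 H=2$ is circular in one direction and incorrect in the other. The inequality $\pr_2 H\ge 2$ (for $U$ nonabelian) is needed \emph{before} any four-group generation argument can be launched, yet you defer it to the end; and your proposed proof of it --- ``if $\pr_2 H=1$, the unique involution of $T$ yields an $H$-invariant proper root subgroup returning us to the abelian case'' --- does not work, because the existence of a single $H$-invariant proper root subgroup is perfectly compatible with $U$ nonabelian (this is precisely the situation analyzed in Lemma~\ref{lem.HNormalRootOdd} and Corollary~\ref{cor.SimpleAbelian}). The paper's proof of $\pr_2 H\ge 2$ (Proposition~\ref{prop.Prufer}) is a delicate case division on whether the chosen involution $i$ lies in the $2$-torus of $H(C_U(i))$; in the hard case it manufactures a second commuting involution from an involution $\omega\in G_{\{0,\infty\}}\setminus H$ via the definable hull of $\langle i\omega\rangle$ and the decomposition $N_G(Y)=G(V)*C_H^\circ(V)$ with $G(V)\cong\ssl_2(F)$. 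None of this is visible in your sketch, and the ``counting involutions'' heuristic does not substitute for it. The upper bound $\mr_2 H\le 2$, by contrast, is close to what you describe and is handled in the paper by showing that any two of three independent involutions must invert a common $C_U(a)$, whence their product centralizes it and Fact~\ref{fact.FourGroup} gives a contradiction.
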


Regarding the notation, the Pr\"{u}fer $2$-rank of a group $A$, denoted $\pr_2 A$, is the maximal $n$ such that the group contains a subgroup isomorphic to $\left(\mathbb{Z}(2^\infty)\right)^n$ where $\mathbb{Z}(2^\infty) := \{x\in \mathbb{C} : x^{2^k} = 1 \text{ for some $k \in \mathbb{N}$} \}$. The (regular) $2$-rank of a group $A$, denoted $\mr_2 A$,  is the maximal rank of all elementary abelian $2$-subgroups of $A$. One always has $\pr_2 A \le \mr_2 A$. 

We  carry the following setup throughout the section.

\begin{setup}
$\mut$ is an infinite proper $\pstar$-Moufang set of finite Morley rank. Let $G$ be the little projective group, $H$ the Hua subgroup, and $X:=U \cup \{\infty\}$. Further, assume that $G$ is of odd type.
\end{setup}

\subsection{The structure of $H$}

We temporarily focus our attention on $H$. Our first proposition is extremely important as, among other things, it ensures that $H$ contains a Klein $4$-group whenever $U$ is not abelian. This will allow us to repeatedly exploit the following fact. 

\begin{fact}[{\cite[Proposition~9.1]{BBC07}}]\label{fact.FourGroup}
Let $K$ be a group generated by two distinct commuting involutions. If $K$ acts definably on a group of finite Morley rank $A$ that is without involutions, then $A = \langle C_A(x) : x\in K^* \rangle.$
\end{fact}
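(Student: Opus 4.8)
The plan is to write $K = \{1,i,j,k\}$ with $i,j,k$ the three involutions, so that $K^* = \{i,j,k\}$ and $k = ij$; since $K$ is abelian, each involution commutes with the others and hence preserves the three centralizers as well as the three \emph{inverted sets} $I_x := \{a \in A : a^x = a^{-1}\}$. Set $B := \langle C_A(x) : x \in K^* \rangle$. As a subgroup generated by finitely many definable $K$-invariant subgroups, $B$ is itself definable and $K$-invariant, and the goal is to show $B = A$.

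First I would reduce to the case that $A$ is connected. The connected component $A^\circ$ is characteristic, hence $K$-invariant, and has no involutions. I claim the finite quotient $A/A^\circ$ has odd order: an involution $\bar a$ would lift to some $a \in A$, and the definable hull $d(a)$ is an abelian group of finite Morley rank without $2$-torsion, hence $2$-divisible, so $a$ is a square in $A$; iterating forces $\bar a$ to be infinitely $2$-divisible in the \emph{finite} group $A/A^\circ$, whence $\bar a = 1$. Thus $K$ acts coprimely on $A/A^\circ$, so the classical Klein four-group generation in the coprime finite setting gives $A/A^\circ = \langle C_{A/A^\circ}(x) : x\in K^*\rangle$, and coprimality identifies each $C_{A/A^\circ}(x)$ with the image of $C_A(x)$. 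Combined with the connected case applied to $A^\circ$, this yields $A = \langle C_A(x)\rangle$, so henceforth I assume $A$ connected. If $A$ is abelian I would argue by eigenspaces: being connected abelian without $2$-torsion it is $2$-divisible, so $A = C_A(x)\oplus I_x$ for each $x$ via the splitting $a \mapsto \tfrac12(a a^x) + \tfrac12(a (a^x)^{-1})$ (the intersection being $2$-torsion, hence trivial). Decomposing simultaneously under the commuting involutions $i,j$ splits $A$ into four joint eigenspaces $A_{++},A_{+-},A_{-+},A_{--}$, and one checks $C_A(i)=A_{++}\oplus A_{+-}$, $C_A(j)=A_{++}\oplus A_{-+}$, $C_A(k)=A_{++}\oplus A_{--}$; these span all four summands, so $B=A$.

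For the nonabelian connected case I would induct on $\rk A$. If some $x\in K^*$ centralizes $A$ then $C_A(x)=A\subseteq B$ and we are done, so assume each involution acts nontrivially; then every $C_A(x)$ is a proper definable subgroup, hence of rank strictly less than $\rk A$. Moreover, were two of the centralizers finite, Fact~\ref{fact.fixedPointsInvolution} would make $A$ abelian, contrary to assumption, so at least one involution $i$ has $C_A(i)$ infinite. The engine of the induction is the finite-Morley-rank analogue of the coprime-action decomposition $A = [A,i]\,C_A(i)$, available because $A$ has no involutions; here $[A,i]$ is a definable $K$-invariant subgroup. When $[A,i]$ is a \emph{proper} subgroup it is connected of smaller rank, so the inductive hypothesis gives $[A,i]\subseteq B$, and since $C_A(i)\subseteq B$ we conclude $A = [A,i]C_A(i)\subseteq B$.

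The genuine difficulty — and the step I expect to be the main obstacle — is the remaining configuration in which $[A,i]=A$ for every involution with infinite centralizer, so that the commutator decomposition gives nothing. Here one must show directly that each $i$-inverted element $t\in I_i$ lies in $B$; the payoff of doing so is a clean rank finish, since the map $a\mapsto a^{-1}a^i$ has fibres the cosets of $C_A(i)$, whence $\rk I_i \ge \rk A - \rk C_A(i)$ and the set $C_A(i)\cdot I_i$ is generic in $A$, and a generic definable subgroup of a connected group is the whole group. To place a given $t$ inside $B$ I would exploit the second involution: with $s:=t^j$, the relations $s^i=s^{-1}$, $t^j=s$, $s^j=t$, $t^k=s^{-1}$ show that $\{t,s\}$ is permuted by $K$, so $d(\langle t,s\rangle)$ is a $K$-invariant definable subgroup without involutions, and whenever it is \emph{proper} the inductive hypothesis forces $t\in B$. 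Controlling the ``bad'' elements $t$ for which $d(\langle t,s\rangle)$ exhausts $A$ — showing they form a non-generic subset of $I_i$, so that the generic part of $C_A(i)\cdot I_i$ already lands in $B$ and hence $B$ is generic — is precisely the delicate genericity argument that I expect to be the crux of the proof.
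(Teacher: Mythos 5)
First, a framing point: the paper does not prove this statement at all --- it is imported as a Fact, cited to \cite[Proposition~9.1]{BBC07} --- so there is no internal proof to compare against, and your proposal must stand or fall as a self-contained argument. It falls, and at exactly the place you yourself flag. Everything you do handle is the soft part of the statement: the reduction to $A$ connected (where, incidentally, ``coprimality identifies $C_{A/A^\circ}(x)$ with the image of $C_A(x)$'' is not the finite coprime-action lemma, since $A^\circ$ is infinite; one needs the cocycle/square-root lifting argument, which does work because definable hulls in $A$ are $2$-divisible with unique square roots), the abelian case, and the two cases where a proper definable $K$-invariant subgroup ($[A,i]$ or $d(\langle t,t^j\rangle)$) makes rank induction bite. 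What you leave open --- $A$ connected nonabelian, $[A,x]=A$ for every $x\in K^*$, and $d(\langle t,t^j\rangle)=A$ for the inverted elements you need --- is not a residual technicality to be finished by ``a delicate genericity argument''; it is the entire content of the proposition. In particular it contains the possibility that $A$ is a \emph{simple} group of degenerate type: such groups cannot be ruled out by present methods (Fact~\ref{fact.BBC} only says connected degenerate-type groups have no involutions, and the discussion around Theorem~\ref{thm.B} treats simple degenerate-type root groups admitting a four-group of automorphisms as a live, wild alternative). For such an $A$ there are no proper nontrivial definable normal subgroups and no reason why a pair of elements should generate a proper definable hull, so both of your reduction devices fail simultaneously, and nothing in your toolkit --- rank induction together with the decomposition $A=C_A(i)\cdot I_i$ coming from Fact~\ref{fact.decompositionInvolution} --- is known to close this case.

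The gap is structural, not cosmetic. The finite-group analogue (generation by four-group centralizers under coprime action) is proved by induction along a normal series with abelian factors, which exists because odd-order groups are solvable by Feit--Thompson; your proposal is essentially a transcription of that induction, but in the finite Morley rank setting there is no solvability to lean on, which is precisely why the hard case survives every reduction. The actual proof of \cite[Proposition~9.1]{BBC07} does not follow this inductive scheme: it exploits the ambient machinery of that paper, analyzing involutions inside the semidirect product $A\rtimes K$ (where $K$ is a Sylow $2$-subgroup, as $A$ has no nontrivial $2$-elements, so that Sylow conjugacy and the dihedral analysis of pairs of involutions apply). So the honest assessment is: correct treatment of the easy cases, but the missing step is the theorem, and your proposed route to it (showing the ``bad'' inverted elements are non-generic) is not known to work.
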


\begin{proposition}\label{prop.Prufer}
If $U$ is not abelian, then $\pr_2 H = \mr_2 H = 2$.
\end{proposition}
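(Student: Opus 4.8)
The plan is to prove the two inequalities $\pr_2 H \ge 2$ and $\mr_2 H \le 2$; since one always has $\pr_2 H \le \mr_2 H$ and $H$ has odd type by Corollary~\ref{cor.UHstruct} (so $\pr_2 H \ge 1$), these combine to give $\pr_2 H = \mr_2 H = 2$. The common engine is an observation I would record first. Because $U$ is nonabelian and has no involutions (Corollary~\ref{cor.UHstruct}), every involution $i \in H$ acts on the connected group $U$ with infinitely many fixed points, as otherwise Fact~\ref{fact.fixedPointsInvolution} would force $U$ abelian. Thus $W_i := C_U(i)$ is an infinite definable root subgroup; it is proper since $H$ acts faithfully on $U$, and hence it is projective by the $\pstar$-hypothesis. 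Proposition~\ref{prop.NHV} then yields $H(W_i) \cong \fieldF^\times$ with $\fieldF$ algebraically closed of characteristic $\ne 2$; I write $P_i$ for its unique (hence characteristic) maximal $2$-torus and $\iota_i \in P_i$ for the involution acting as $-1$ on $W_i$. Note $\iota_i \ne i$, since $i$ centralizes the infinite group $W_i$ whereas $\iota_i$ inverts it.

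For the lower bound I would run a toral rigidity argument. Let $T$ be a maximal $2$-torus of $H$ and $i \in T$ an involution. As $T$ is abelian it centralizes $i$, so it normalizes $W_i$ and therefore both $H(W_i)$ and its characteristic subgroup $P_i \cong \mathbb{Z}(2^\infty)$. Since $\aut(\mathbb{Z}(2^\infty))$ has no nontrivial divisible $2$-subgroup, the divisible group $T$ must centralize $P_i$; then $T P_i$ is a $2$-torus containing $T$, so maximality forces $P_i \le T$. Because $\iota_i \in P_i \le T$ and $\iota_i \ne i$, the torus $T$ contains $\langle i\rangle \times P_i \cong \mathbb{Z}/2 \times \mathbb{Z}(2^\infty)$, and hence has Pr\"ufer rank at least $2$.

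For the upper bound, suppose toward a contradiction that $\mr_2 H \ge 3$ and fix $E \le H$ with $E \cong (\mathbb{Z}/2)^3$. Fix an involution $a \in E$. Each $e \in E$ commutes with $a$, normalizes $W_a$, and restricts to an automorphism of the projective group $W_a$, and I claim this restriction is the identity or inversion. Indeed $C_{W_a}(e) = W_a \cap W_e$ is a definable root subgroup, so by Corollary~\ref{cor.MaxRoot} it is either all of $W_a$ (giving $W_e = W_a$ and $e|_{W_a} = \mathrm{id}$) or trivial, in which case $e|_{W_a}$ is a fixed-point-free involution, identified with inversion by Fact~\ref{fact.fixedPointsInvolution}. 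Thus $e \mapsto e|_{W_a}$ is a homomorphism $E \to \{\mathrm{id},\text{inversion}\} \cong \mathbb{Z}/2$ whose kernel $E_a$ consists of $1$ together with the involutions $e$ satisfying $W_e = W_a$. This kernel cannot be all of $E$: otherwise $C_U(e) = W_a$ for every $e$, and Fact~\ref{fact.FourGroup} (applicable since $U$ has no involutions) would give $U = W_a$, contradicting that $U$ is nonabelian. Hence $E_a$ has index exactly $2$, so $|E_a| = 4$.

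Finally I would conclude by a parity count. The relation $a \sim a' \iff W_a = W_{a'}$ is an equivalence on the seven involutions of $E$, and the class of $a$ is exactly $E_a \setminus \{1\}$, a set of size $3$. These classes partition the seven involutions into blocks of size $3$, which would force $3 \mid 7$ — absurd; therefore $\mr_2 H \le 2$. The main obstacle is securing the dichotomy underlying both arguments: one must ensure that each centralizer appearing — a priori merely a root subgroup — is forced by projectivity and Corollary~\ref{cor.MaxRoot} to be trivial or everything, with no intermediate possibility. It is precisely this rigidity that drives both the $\mathbb{Z}(2^\infty)$-centralizing step and the clean counting here.
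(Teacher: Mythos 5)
Your upper bound $\mr_2 H \le 2$ is sound and is essentially the paper's own argument in different packaging: the dichotomy ``each $e\in E$ centralizes or inverts $W_a$,'' obtained from Corollary~\ref{cor.MaxRoot}, Fact~\ref{fact.fixedPointsInvolution} and Fact~\ref{fact.FourGroup}, is exactly what the paper uses, and your parity count on the seven involutions of $E$ replaces the paper's explicit choice of $a,b,c$ with $c\notin\langle a,b\rangle$. (One small repair: when $C_{W_a}(e)$ is finite but nontrivial, Corollary~\ref{cor.MaxRoot} does not apply, but Fact~\ref{fact.fixedPointsInvolution} still forces inversion and then triviality of the fixed points, so the dichotomy survives.)

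The lower bound, however, has a genuine gap. You define $\iota_i$ as ``the involution of $P_i$ acting as $-1$ on $W_i$'' and conclude $\iota_i\neq i$. But Proposition~\ref{prop.NHV}(3) only says that $H(W_i)$ \emph{generates} a field in $\emorph(W_i)$; the map $H(W_i)\to\emorph(W_i)$ has kernel $C_{G(W_i)}(W_i)=Z(G(W_i))$, which is nontrivial precisely when $G(W_i)\cong\ssl_2(\fieldF)$. In that case the unique involution of $H(W_i)\cong\fieldF^\times$ is the central involution of $\ssl_2(\fieldF)$: it \emph{centralizes} $W_i$ (the element acting as $-1$ on $W_i$ has order $4$), and nothing prevents it from being equal to $i$ itself. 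The paper's Lemma~\ref{lem.HNormalRootOdd}(2) records exactly this phenomenon for $H$-invariant $V$. When $\iota_i=i$, your conclusion ``$\langle i\rangle\times P_i\le T$'' collapses to $P_i\le T$ with $i$ the unique involution of $P_i$, yielding only $\pr_2 T\ge 1$. This is precisely the case ``$i\in T$'' that occupies most of the paper's proof and is handled there by an entirely different mechanism: connectedness of $N_G(Y)$, an involution $\omega\in G_{\{0,\infty\}}\setminus H$ commuting with $i$ via a definable-hull argument, and the fact that elements of $\ssl_2(\fieldF)$ swapping $0$ and $\infty$ have order $4$, which together manufacture an involution in $C^\circ_H(V)$. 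Your toral rigidity step (that $P_i\le T$ for any maximal $2$-torus $T$ containing $i$, since $\aut(\mathbb{Z}(2^\infty))$ has no nontrivial divisible subgroup) is correct and disposes of the case $\iota_i\ne i$ rather more cleanly than the paper's first case, but the complementary case is missing and cannot be reached by the same method.
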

\begin{proof}
Assume $U$ is not abelian. We first show that $\pr_2 H \ge 2$. Let $i$ be an involution of $H$, and set $V:= C_U(i)$. Let $Y := V\cup\{\infty\}$. As $U$ is not abelian, $V$ is infinite (see Fact~\ref{fact.fixedPointsInvolution}), and hence projective. Let $T$ be the $2$-torus of $H(V)$, and recall that $T$ is central in $N_H(V)$. We consider two cases.

First, assume that $i\notin T$. By \cite[Theorem~3]{BuCh08}, $i$ is contained in some $2$-torus of $H$, say $S$. As $S$ centralizes $i$, $S < N_H(V)$, so $S$ commutes with $T$. Since $i\notin T$, $T$ and $S$ are distinct commuting $2$-tori, so $\pr_2 H \ge 2$.

Now we treat the case when $i \in T$. We first show that $N_H(V)$ and $N_G(Y)$ are connected. Let $\dc(T)$ be the definable hull of $T$. Notice that $C_H(\dc(T)) \le C_H(i) \le N_H(V) \le C_H(\dc(T))$. Thus,  $N_H(V) = C_H(\dc(T))$, so  \cite[Theorem~1]{AlBu08} shows that $N_H(V)$ is connected. By \cite[Lemma~3.2(2)]{SeY08}, $N_G(Y) = G(V)*N_H(V)$. Hence, $N_G(Y)$ is generated by connected groups and is therefore connected as well.

As $G$ has finite Morley rank,  there is some involution $\omega$ in $G_{\{0,\infty\}} \setdiff H$. Let $D$ be the definable hull of $\langle i\omega \rangle$. As $D < G_{\{0,\infty\}}$, $i$ and $\omega$ are not $D$-conjugate. By \cite[I,~Lemma~2.20]{ABC08}, there is an involution $k \in D$ (different from $i$ and $\omega$) that commutes with both $i$ and  $\omega$. If $k\in H$, then we may apply \cite[Theorem~3]{BuCh08} to find a $2$-torus of $N_H(V)$ containing $k$, hence different from $T$. Thus, we are done unless $k \in G_{\{0,\infty\}} \setdiff H$. 

The situation is now that $i \in T$, $k\in G_{\{0,\infty\}} \setdiff H$, and $[i,k]=1$. Since  $N_G(Y)$ is connected, Proposition~\ref{prop.NHV} tells us that $N_G(Y) = G(V)* C_H^\circ(V)$. As $k$ centralizes $i$, $k\in N_G(Y)$, so $k = gc$ for some $g\in G(V)$ and some $c\in C_H^\circ(V)$.  Notice that our assumption that $i\in H(V)\cap C_H(V)$ forces $G(V) \cong \ssl_2(F)$ for some algebraically closed field $F$. Since, $g \in G(V) \cap G_{\{0,\infty\}}$ and $g\notin H$, $g$ must have order $4$. As $k$ has order $2$, $c \neq 1$. Since $g$ and $c$ commute, $c$ has order dividing $4$, and we conclude that $C_H^\circ(V)$ contains an involution. Thus, $C_H^\circ(V)$ contains a $2$-torus by Fact~\ref{fact.BBC}, and $\pr_2 H\ge 2$.

In both cases, $\pr_2 H \ge 2$. We conclude by showing that $\mr_2 H < 3$. Towards a contradiction, suppose that $H$ has an elementary abelian $2$-subgroup $K$ of order $8$. Choose distinct involutions $a,b,c \in K$ such that $c \notin \langle a,b\rangle$. For each  $j \in K$, set $V_j = C_U(j)$. Now, $b$ and $c$ act on $V_a$. If $b$ centralizes $V_a$, then Corollary~\ref{cor.MaxRoot} forces $V_a = V_b$, and thus, Fact~\ref{fact.FourGroup} implies that $U = V_a$, a contradiction. Since $b$ does not centralize $V_a$,  Proposition~\ref{prop.NHV} tells us that $b$ inverts $V_a$. Similarly, $c$ inverts $V_a$. Thus, $bc$ centralizes $V_a$, so Corollary~\ref{cor.MaxRoot} implies that $V_a = V_{bc}$. By assumption, $a \neq bc$, so we may apply Fact~\ref{fact.FourGroup}  to again arrive at the contradiction that $V_a = U$. 
\end{proof}

We now give a couple of quick corollaries. The first will be refined below in Proposition~\ref{prop.OneHInvar}.

\begin{corollary}\label{cor.OneHInvarRoot}
$U$ has at most one infinite proper definable $H$-invariant root subgroup.
\end{corollary}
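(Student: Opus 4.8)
The plan is to argue by contradiction: suppose $U$ contains two distinct infinite proper definable $H$-invariant root subgroups $V$ and $W$, and exploit that we are in a $\pstar$-setting. Since $\mut$ is a $\pstar$-Moufang set, $V$ is projective, so Proposition~\ref{prop.NHV} applies and gives that $H(V)$ is (definable and) isomorphic to $\fieldF^\times$ for an algebraically closed field $\fieldF$. The key input is Proposition~\ref{prop.TwoHNormalRoot}, which applies verbatim to the pair $V,W$ and forces $H = H(V)$. Hence the entire Hua subgroup is isomorphic to $\fieldF^\times$, and I would read off $\pr_2 H \le 1$, since the multiplicative group of a field has Pr\"ufer $2$-rank at most one (equal to $1$ when $\charac \fieldF \ne 2$ and $0$ otherwise).

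I would then split on whether $U$ is abelian. If $U$ is not abelian, Proposition~\ref{prop.Prufer} yields $\pr_2 H = 2$, which directly contradicts the bound $\pr_2 H \le 1$ just obtained; this settles the nonabelian case.

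It remains to handle abelian $U$, where Proposition~\ref{prop.Prufer} is vacuous and a separate argument is required --- this is the only place needing real care. Here I would route through specialness: as $\mut$ is proper with abelian root groups, Fact~\ref{fact.Dichotomy} makes it special, and since a proper $\pstar$-Moufang set is \hp{}, Corollary~\ref{cor.UHstruct} shows $U$ has no involutions, so $U$ is not an elementary abelian $2$-group. Fact~\ref{fact.Irreducible} then forces $H$ to act irreducibly on $U$, so $U$ has no proper nontrivial $H$-invariant subgroup whatsoever; in particular it has no infinite proper definable $H$-invariant root subgroup, an even stronger conclusion than claimed. The main thing to keep in mind throughout is that the projectivity of $V$ (and hence the shape of $H(V)$) is exactly what the $\pstar$-hypothesis supplies, and that the abelian case cannot be closed by a Pr\"ufer-rank count and must instead be settled via irreducibility.
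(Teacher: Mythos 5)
Your proposal is correct and follows essentially the same route as the paper: the paper also rules out abelian $U$ via Facts~\ref{fact.Dichotomy} and \ref{fact.Irreducible} (irreducibility of the $H$-action) and then derives the contradiction $\pr_2 H \ge 2$ versus $H = H(V) \cong \fieldF^\times$ from Propositions~\ref{prop.Prufer} and \ref{prop.TwoHNormalRoot}. Your treatment of the abelian case is if anything slightly more explicit than the paper's in justifying that $U$ is not an elementary abelian $2$-group.
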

\begin{proof}
Assume that $U$ contains distinct infinite proper definable $H$-invariant root subgroups $V$ and $W$. Notice that $U$ is not abelian as otherwise Facts~\ref{fact.Dichotomy} and \ref{fact.Irreducible} would imply that $H$ acts irreducibly on $U$. Thus, $\pr_2 H \ge 2$. However, Proposition~\ref{prop.TwoHNormalRoot} says that $H=H(V)=H(W)$. Since $H(V)$ is isomorphic to the multiplicative group of a field, it must be that $\pr_2 H = 1$.
\end{proof}

\begin{corollary}\label{cor.NilpotentHua}
$H$ contains at most one central involution. In particular, if $H$ is nilpotent, $\mut \cong \Mouf(\fieldF)$ for $\fieldF$ an algebraically closed field.
\end{corollary}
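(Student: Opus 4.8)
The plan is to prove the two assertions in turn, with the second following quickly from the first via the structural results of this section.

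For the first assertion, I would suppose toward a contradiction that $H$ contains two distinct central involutions $i$ and $j$, and set $K := \langle i,j\rangle$, a Klein $4$-group whose nontrivial elements $i$, $j$, $ij$ are all central in $H$. Since $\mut$ is a proper $\pstar$-Moufang set it is \hp, so Corollary~\ref{cor.UHstruct} guarantees that $U$ has no involutions; thus Fact~\ref{fact.FourGroup} applies to $K$ acting on $U$ and yields $U = \langle C_U(x) : x \in K^*\rangle$. Each $C_U(x)$ with $x\in K^*$ is a definable root subgroup (being the centralizer of a Hua element), is proper since $H$ acts faithfully on $U$ and $x\neq 1$, and is $H$-invariant since $x$ is central in $H$. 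I would then split on whether $U$ is abelian. If $U$ is nonabelian, Fact~\ref{fact.fixedPointsInvolution} forces each $C_U(x)$ to be infinite, so Corollary~\ref{cor.OneHInvarRoot} collapses the three centralizers to a single proper root subgroup $V$, whence the displayed equation gives $U = V < U$, a contradiction. If $U$ is abelian, then by Facts~\ref{fact.Dichotomy} and~\ref{fact.Irreducible} (noting that $U$ is not an elementary abelian $2$-group, again by Corollary~\ref{cor.UHstruct}) the Hua subgroup $H$ acts irreducibly on $U$; each $H$-invariant $C_U(x)$ is therefore $0$, and the equation gives $U=0$, contradicting that $U$ is infinite. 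In either case we conclude that $H$ has at most one central involution.

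For the second assertion, assume $H$ is nilpotent. Since $H$ is connected and of odd type by Corollary~\ref{cor.UHstruct}, its maximal $2$-torus $T$ is central in $H$ by the structure theory of connected nilpotent groups of finite Morley rank. Hence every involution of $T$ is central in $H$, and the first assertion forces $T$ to contain exactly one involution; that is, $\pr_2 H = 1$. Proposition~\ref{prop.Prufer}, in contrapositive form, then shows that $U$ is abelian. Finally, exactly as in the proof of Corollary~\ref{cor.FiniteFixed}, Fact~\ref{fact.Dichotomy} makes $\mut$ special and \cite[Proposition~3.9]{WiJ11} yields $\mut \cong \Mouf(\fieldF)$ for an algebraically closed field $\fieldF$.

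The routine points — faithfulness of $H$ on $U$, definability of the centralizers, and the irreducibility input in the abelian subcase — present no difficulty, and the bulk of the argument is an assembly of results already established. The one step warranting genuine care is the appeal to nilpotence: one must invoke that in a connected nilpotent group of finite Morley rank the maximal $2$-torus is \emph{central} (not merely normal), since this is precisely what converts the nilpotence hypothesis into the bound $\pr_2 H = 1$ that feeds Proposition~\ref{prop.Prufer}. Locating and citing the correct structural result, and confirming the centrality of $T$, is the main obstacle.
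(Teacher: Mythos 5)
Your argument for the first assertion is correct and follows essentially the same route as the paper: both proofs take a Klein four-group $K$ of central involutions, collapse the centralizers $C_U(x)$, $x\in K^*$, to a single proper subgroup, and contradict Fact~\ref{fact.FourGroup}. Your organization differs slightly (you split on whether $U$ is abelian and handle the abelian case via Facts~\ref{fact.Dichotomy} and~\ref{fact.Irreducible}, whereas the paper observes directly that at most one $C_U(x)$ can be finite and then uses Corollaries~\ref{cor.OneHInvarRoot} and~\ref{cor.MaxRoot} uniformly), but both variants are sound. Likewise, your treatment of the nilpotent case matches the paper: the maximal $2$-torus of a connected nilpotent group of odd type is the connected component of the unique Sylow $2$-subgroup of the torsion subgroup, hence normal, hence central by \cite[I,~Corollary~5.25]{ABC08} -- this is exactly the citation you correctly flagged as the point needing care -- and then $\pr_2 H = 1$ feeds the contrapositive of Proposition~\ref{prop.Prufer} to give $U$ abelian.

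The one genuine defect is the very last step. Having shown $U$ is abelian, and hence that $\mut$ is special by Fact~\ref{fact.Dichotomy}, you invoke \cite[Proposition~3.9]{WiJ11} ``exactly as in the proof of Corollary~\ref{cor.FiniteFixed}.'' But that proposition, as it is used throughout this paper, is the \emph{Zassenhaus} result: it carries the hypothesis that $C_U(h)$ is finite for every $h\in H^*$ (this is precisely the hypothesis of Corollary~\ref{cor.FiniteFixed}, and the paper describes that corollary as ``extending'' \cite[Proposition~3.9]{WiJ11} to nonabelian $U$). In the present situation you have no control over the fixed-point spaces of elements of $H$, so that hypothesis is not available and the citation does not apply. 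What is needed instead is the classification of infinite special Moufang sets of finite Morley rank with abelian root groups, which is what the paper cites at this point, namely \cite{DMeTe08}. The conclusion is of course still reachable -- only the supporting reference must be replaced -- but as written the final step of your argument does not close.
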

\begin{proof}
Suppose that $H$ contains two distinct central involutions, and let $K$ be the group they generate. Notice that $C_U(x)$ is finite for at most one $x \in K^*$, as otherwise, two distinct involutions of $H$ would invert $U$ implying that their product fixes $U$. Thus, there are distinct $x,y \in K^*$ such that $C_U(x)$ and $C_U(y)$ are infinite proper definable $H$-invariant root subgroups. By Corollary~\ref{cor.OneHInvarRoot}, $C_U(x) = C_U(y)$. But then the fixed-point spaces of every $x \in K^*$ coincide (using Corollary~\ref{cor.MaxRoot}), which contradicts Fact~\ref{fact.FourGroup}. Thus, $H$ contains at most one central involution.

Now consider when $H$ is nilpotent. We claim that every $2$-torus of $H$ is central. Indeed, the torsion subgroup of $H$ has a unique Sylow $2$-subgroup (simply because $H$ is nilpotent), and the connected component of this Sylow $2$-subgroup must be a (or rather \emph{the}) maximal $2$-torus of $H$ since $H$ has odd type. As this maximal $2$-torus is normal in $H$, it is in fact central in $H$ by \cite[I,~Corollary~5.25]{ABC08}, so every $2$-torus of $H$ is central, as claimed. Since $H$ contains at most one central involution, Proposition~\ref{prop.Prufer} implies that $U$ is abelian, and the rest now follows from Fact~\ref{fact.Dichotomy} and \cite{DMeTe08}.
\end{proof}

\subsection{The structure of $U$}

We now work to limit the structure of $U$. We begin with a couple of lemmas.

\begin{lemma}\label{lem.HNormalRootOdd}
Suppose that $V$ is an infinite proper definable $H$-invariant root subgroup of $U$. Let $i$ be the unique involution of $H(V)$. The following are true.
\begin{enumerate}
\item $G(V) \cong \ssl_2(F)$ for some algebraically closed field $F$.
\item $C_H(V)$ contains a unique involution, namely $i$.
\item \label{lem.HNormalRootOdd.Free} $H(V) \setdiff \langle i\rangle$ acts freely on $U^*$.
\end{enumerate}
\end{lemma}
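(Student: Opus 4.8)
The plan is to treat the three assertions in order, using throughout that $V$ is projective (since $\mut$ is $\pstar$ and $V$ is infinite, proper, and definable), so Proposition~\ref{prop.NHV} applies: $G(V)\cong\ssl_2(F)$ or $\psl_2(F)$, $H(V)\cong F^\times$ with unique involution $i$, and $V$ is one-dimensional over the field $E\cong F$ that $H(V)$ generates in $\emorph(V)$. The action of $H(V)$ on $V$ factors through $H(V)/(H(V)\cap C_{G(V)}(V))\cong E^\times$ acting as scalars, so $i$ centralizes $V$ exactly when $G(V)\cong\ssl_2(F)$ (action-kernel $\langle i\rangle$) and $i$ inverts $V$ exactly when $G(V)\cong\psl_2(F)$ (trivial kernel). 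Two preliminaries I would record first: by Lemma~\ref{lem.HNormalRoot}, $i\in H(V)\le Z(H)$, so $C_U(h)$ is $H$-invariant for every $h\in H(V)$; and $U$ must be \emph{nonabelian}, since otherwise Facts~\ref{fact.Dichotomy} and~\ref{fact.Irreducible}, together with Corollary~\ref{cor.UHstruct} (no involutions in $U$), would force $H$ to act irreducibly on $U$, contradicting the existence of the proper nontrivial $H$-invariant subgroup $V$.

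For (1) I would argue by contradiction, assuming $G(V)\cong\psl_2(F)$, so that $i$ inverts $V$ and hence $V\cap C_U(i)=C_V(i)=0$. Because $U$ is nonabelian, Fact~\ref{fact.fixedPointsInvolution} applied to the involution $i$ on the connected, involution-free group $U$ forces $C_U(i)$ to be infinite; and $C_U(i)$ is proper (as $i$ inverts $V\neq 0$) and is a definable $H$-invariant root subgroup. Thus $V$ and $C_U(i)$ would be two distinct infinite proper definable $H$-invariant root subgroups, contradicting Corollary~\ref{cor.OneHInvarRoot}. Hence $G(V)\cong\ssl_2(F)$, $i$ is the central involution of $G(V)$, and so $V\le C_U(i)$; since $C_U(i)$ is then an infinite proper ($i\neq 1$) projective root subgroup containing the projective root subgroup $V$, Corollary~\ref{cor.MaxRoot} gives $C_U(i)=V$, a fact I will reuse. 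I expect this dichotomy to be the conceptual crux: recognizing that a $\psl_2$-type $G(V)$ manufactures a \emph{second} $H$-invariant root subgroup $C_U(i)$, which the ``at most one'' Corollary~\ref{cor.OneHInvarRoot} forbids.

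For (2), note $i\in H(V)\cap C_{G(V)}(V)\le C_H(V)$ is an involution of $C_H(V)$. For uniqueness, suppose $j\in C_H(V)$ is a second involution. Each of $i$, $j$, $ij$ centralizes $V$ and is a nontrivial element of $H$, so each of $C_U(i)$, $C_U(j)$, $C_U(ij)$ is an infinite proper projective root subgroup containing $V$, hence equals $V$ by Corollary~\ref{cor.MaxRoot}. Since $i$ is central in $H$, $K:=\langle i,j\rangle$ is a Klein four-group acting on the involution-free group $U$, so Fact~\ref{fact.FourGroup} yields $U=\langle C_U(x):x\in K^*\rangle=V$, contradicting that $V$ is proper.

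For (3), fix $h\in H(V)\setminus\langle i\rangle$; then $h$ acts on $V$ as a nontrivial scalar, so $C_V(h)=0$, and $C_U(h)$ is an $H$-invariant root subgroup meeting $V$ trivially and properly contained in $U$ (as $h\neq 1$ moves $V$). If $C_U(h)$ were infinite, it would be a projective $H$-invariant proper root subgroup distinct from $V$, again contradicting Corollary~\ref{cor.OneHInvarRoot}; so $C_U(h)$ is finite. The only genuinely delicate point is this finite case: since $H$ is connected and normalizes the finite set $C_U(h)$, it centralizes it, giving $C_U(h)\le C_U(H)$. But any $w\in C_U(H)$ is fixed by $i$, hence lies in $C_U(i)=V$, and is fixed by $H(V)$, hence lies in $C_V(H(V))=0$; thus $C_U(H)=0$ and $C_U(h)=0$. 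Therefore $H(V)\setminus\langle i\rangle$ acts freely on $U^*$, and the delicacy dissolves once one sees that $C_U(H)$ is squeezed between $C_U(i)=V$ and $C_V(H(V))=0$.
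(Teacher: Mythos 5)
Your proof is correct, and for parts (1) and (2) it is essentially the paper's argument: both hinge on $C_U(i)$ being an infinite proper definable $H$-invariant root subgroup, so that Corollary~\ref{cor.OneHInvarRoot} forces $C_U(i)=V$ and hence $i\in H(V)\cap C_H(V)=Z(G(V))$, which rules out $\psl_2$; and your part (2) is the same MaxRoot-plus-Fact~\ref{fact.FourGroup} contradiction as in the paper. The only genuine divergence is in part (3). The paper supposes $C_U(h)$ is nontrivial and proper and shows it must be \emph{infinite}: Fact~\ref{fact.FourGroup} applied to $C_U(h)$ produces an involution $k\in H$ with $C_U(h)\cap C_U(k)\neq 1$, whence Corollary~\ref{cor.RootAndChar} gives $C_U(k)\le C_U(h)$; then Corollary~\ref{cor.OneHInvarRoot} forces $C_U(h)=V$, and since only $\langle i\rangle$ fixes $V$ pointwise, $h=i$. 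You instead observe that $C_U(h)$ meets $V$ trivially (as $h$ acts on $V$ by a nontrivial scalar), so $C_U(h)$ must be \emph{finite} --- otherwise it would be a second infinite proper definable $H$-invariant root subgroup --- and you then dispose of the finite case by noting that the connected group $H$ fixes the finite set $C_U(h)$ pointwise, while $C_U(H)\le C_V(H(V))=0$. Both routes funnel through Corollary~\ref{cor.OneHInvarRoot}; yours trades Fact~\ref{fact.FourGroup} and Corollary~\ref{cor.RootAndChar} for the elementary fact that a connected group acts trivially on any finite set it normalizes, which is a perfectly sound and arguably slightly cleaner way to see that the only possible infinite fixed-point space for an element of $H(V)$ is $V$ itself.
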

\begin{proof}
By Facts~\ref{fact.Dichotomy} and \ref{fact.Irreducible}, $U$ is not abelian, so $C_U(i)$ is infinite. By Lemma~\ref{lem.HNormalRoot}, $i$ is central in $H$, so $C_U(i)$ is again an infinite proper definable $H$-invariant root subgroup of $U$. By Corollary~\ref{cor.OneHInvarRoot}, it must be that $V=C_U(i)$, so $i \in H(V)\cap C_H(V)$. Using Proposition~\ref{prop.NHV}, we find that $G(V) \cong \ssl_2(F)$ for some algebraically closed field $F$.

Now let $j$ be any involution in $C_H(V)$; we will show that $i=j$. Suppose $i \neq j$, and set $K = \langle i,j\rangle$. By Corollary~\ref{cor.MaxRoot}, $C_U(j) = V$, but then Fact~\ref{fact.FourGroup} implies that $V=U$, which is a contradiction. 

Finally, suppose that some $h\in H(V)$ has a proper nontrivial fixed-point space. Then $C_U(h)$ is an $H$-invariant root subgroup of $U$. We claim that $C_U(h)$ must be infinite. Since $U$ is not abelian, $H$ has distinct commuting involutions. Thus, Fact~\ref{fact.FourGroup}, applied to $C_U(h)$, shows that $C_U(h)\cap C_U(k) \neq 1$ for some involution $k\in H$. Notice that $C_U(k)$ is infinite since $U$ is not abelian. By Corollary~\ref{cor.RootAndChar}, $C_U(h)$ contains $C_U(k)$. Thus, $C_U(h)$ is infinite, so Corollary~\ref{cor.OneHInvarRoot} forces $V = C_U(h)$. As $i$ is the only nontrivial element of $H(V)$ fixing $V$, $h=i$.
\end{proof}

We now make a Morley rank calculation important to our eventual proof that ``$U$ not simple implies $U$ abelian.''

\begin{lemma}\label{lem.RankBound}
If $U$ has an infinite proper definable $H$-invariant root subgroup, then $\rk U \ge \rk H$.
\end{lemma}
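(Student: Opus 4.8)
The plan is to compute $\rk H$ from the central decomposition of $H$ and then realize that rank inside $U$ as the rank of a single $H$-orbit. Writing $V$ for the given infinite proper definable $H$-invariant root subgroup and $C := C_H^\circ(V)$, Lemma~\ref{lem.HNormalRoot} gives $H = H(V) * C$. Since $H(V)\cap C = Z(G(V))$ is finite and, by Proposition~\ref{prop.NHV}, $H(V)\cong \fieldF^\times$ while $V\cong \fieldF^+$, we have $\rk H(V) = \rk V$ and hence $\rk H = \rk V + \rk C$. Because $H$ fixes $0$ and preserves $U$, for $u\in U^*$ the orbit $u^H$ lies in $U$ and $\rk(u^H) = \rk H - \rk C_H(u)$; so it suffices to produce a $u\in U\setdiff V$ with $C_H(u)$ finite.

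First I would record how the two factors move $U$. For $c\in C^*$ the subgroup $C_U(c)$ is a root subgroup containing $V$ (as $c$ centralizes $V$), is proper (the Hua subgroup acts faithfully on $U$), and is infinite; being an infinite proper definable root subgroup of a $\pstar$-Moufang set it is projective, so Corollary~\ref{cor.MaxRoot} forces $C_U(c)=V$. Thus $C$ fixes $V$ pointwise and acts freely on $U\setdiff V$. Likewise, by Lemma~\ref{lem.HNormalRootOdd}(3) together with $C_U(i)=V$, every nontrivial element of $H(V)$ has centralizer $0$ or $V$ in $U$, so $H(V)$ also acts freely on $U\setdiff V$. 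Consequently, for every $u\in U\setdiff V$ we get $C_C(u)=1$ and $C_{H(V)}(u)=1$.

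Now set $D := C_H^\circ(u)$ for generic $u\in U\setdiff V$, with the aim of showing $D=1$. As $V$ is $H$-invariant, $C\trianglelefteq H$, and $D\cap C = 1$ yields an embedding $D\hookrightarrow H/C\cong H(V)/\langle i\rangle\cong \fieldF^\times$; hence $D$ is abelian and, since $H(V)\le Z(H)$ by Lemma~\ref{lem.HNormalRoot}, $D$ is centralized by $H(V)$. For generic $d\in D^*$ the centralizer $C_U(d)$ is therefore $H(V)$-invariant and contains $u\notin V$, so Corollary~\ref{cor.RootAndChar} gives either $V\le C_U(d)$ or $V\cap C_U(d)=0$. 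The first alternative forces $d\in C_H(V)$, and $\{d\in D: V\le C_U(d)\}=D\cap C_H(V)$ is finite (its connected part would lie in $C$); so generically $V\cap C_U(d)=0$, producing an infinite projective root subgroup disjoint from $V$, normalized by $D$, on which $D$ fixes the point $u$.

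The hard part—and the step I expect to be the main obstacle—is to convert this configuration into a contradiction forcing $D=1$. My intended route is the involution geometry of $H$: Proposition~\ref{prop.Prufer} gives $\pr_2 H = 2$, so one obtains a four-group $\langle i,j\rangle$ with $i$ the central involution ($C_U(i)=V$) and $j$ an involution of $D$, using that $D\hookrightarrow\fieldF^\times$ has at most one involution. Then $C_U(j)$ is an infinite projective root subgroup different from $V$ (Fact~\ref{fact.fixedPointsInvolution} makes it infinite), and by playing $V$, $C_U(j)$, and $C_U(ij)$ against one another through Fact~\ref{fact.FourGroup}, Corollary~\ref{cor.MaxRoot}, and Corollary~\ref{cor.OneHInvarRoot} one should conclude that $D$ centralizes $V$ or acts on it exactly as $H(V)$ does, i.e.\ $D\le C$ or $D\le H(V)$ up to finite index, contradicting $D\cap C = D\cap H(V)=1$. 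The genuinely delicate case is when $D$ is infinite of degenerate type (hence contains no involution $j$), which the four-group argument does not reach; I expect this to require a separate generosity or decent-torus argument (in the spirit of Lemma~\ref{lem.GenZassenhaus}) to exclude.
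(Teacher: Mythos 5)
Your reduction to producing a single $u\in U\setdiff V$ with $C_H(u)$ finite is a legitimate strategy, and the bookkeeping up to the point where you introduce $D:=C_H^\circ(u)$ is sound. But the proof then stops: the step ``$D=1$'' is exactly the content you would need, and it is not established. Your proposed completion via a four-group $\langle i,j\rangle$ with $j\in D$ requires $D$ to contain an involution, and, as you yourself note, $D$ embeds into (a quotient of) $\fieldF^\times$ and so may perfectly well be infinite and torsion-free (e.g.\ in characteristic $0$); no generosity or decent-torus argument is supplied for that case, and it is not clear one exists at this level of generality. So the proposal has a genuine gap at its decisive step. Note also that it is not even clear a priori that some $H$-orbit on $U$ has full rank $\rk H$ --- that is essentially equivalent to the statement being proved --- so the orbit-counting frame does not buy you anything until $D=1$ is in hand.

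The paper avoids this entirely by running the argument in the opposite direction, and you were in fact one observation away from it. Assume $\rk U<\rk H$. Then for \emph{every} $a\in U^*$ the stabilizer $C_H(a)$ is nontrivial (the orbit $a^H$ has rank at most $\rk U<\rk H$). Pick a nontrivial $h\in C_H(a)$ and set $W:=C_U(h)$, a proper definable root subgroup containing $a$. Since $H(V)\le Z(H)$ (Lemma~\ref{lem.HNormalRoot}), $W$ is $H(V)$-invariant, and the free action of $H(V)\setdiff\langle i\rangle$ on $U^*$ (Lemma~\ref{lem.HNormalRootOdd}(3)) forces $W$ to be infinite; by the $\pstar$-hypothesis $W$ is projective, hence the Moufang set it induces is special, giving $(-a)\mu_a=a$. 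As $a$ was arbitrary, $\mut$ is special by the local criterion \cite[Lemma~7.1.4]{DMeSe09}, and Fact~\ref{fact.Irreducible} then contradicts the existence of the proper $H$-invariant subgroup $V$ (as $U$ has no involutions). Your own intermediate configuration --- $u$ sitting inside an infinite projective root subgroup --- already yields $(-u)\mu_u=u$; the missing idea was to exploit that conclusion globally via specialness rather than to try to kill $C_H^\circ(u)$.
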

\begin{proof}
Suppose that $U$ has a proper definable $H$-invariant root subgroup $V$, and, towards a contradiction, assume that $\rk U < \rk H$. We will show that the Moufang set is special which, by Fact~\ref{fact.Irreducible}, is a contradiction. We will use a local approach to showing that $\mut$ is special: $\mut$ is special if and only if $(-a)\mu_a = a$ for all $a \in U^*$, see \cite[Lemma~7.1.4]{DMeSe09}.

Let $a\in U^*$ be arbitrary. Since $\rk U < \rk H$, $C_H(a)$ is nontrivial. Let $h$ be a nontrivial element in $C_H(a)$, and set $W:= C_U(h)$. By Lemma~\ref{lem.HNormalRoot}, $H(V)$ is central in $H$, so $H(V)$ acts on $W$. Now Lemma~\ref{lem.HNormalRootOdd}\eqref{lem.HNormalRootOdd.Free} implies that $W$ is infinite. By the $\pstar$-hypothesis, $W$ is projective, so $(-a)\mu_a = a$.
\end{proof}

Fact~\ref{fact.Irreducible} says that the Hua subgroup of a special Moufang set almost always acts irreducibly on $U$. The next proposition says that in our setting, which does not assume that the Moufang set is special, either $H$ acts definably irreducibly on $U$ or there is exactly one proper nontrivial definable $H$-invariant subgroup of $U$. Further, in the latter case, the exceptional subgroup is in fact a root subgroup.

\begin{proposition}\label{prop.OneHInvar}
If $A$ is a proper nontrivial definable $H$-invariant subgroup of $U$, then $H$ has a unique central involution $i$ and $A = C_U(i)$. In particular, $A$ is a root subgroup, so $A$ is infinite, connected, and abelian.
\end{proposition}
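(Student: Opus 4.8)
The plan is to leverage the Klein four-group furnished by Proposition~\ref{prop.Prufer} together with the centralizer dichotomy of Corollary~\ref{cor.RootAndChar}. First I would dispose of the abelian case: if $U$ were abelian, then since $\mut$ is proper with abelian root groups it is special (Fact~\ref{fact.Dichotomy}), and as $U$ has no involutions (Corollary~\ref{cor.UHstruct}) it is not an elementary abelian $2$-group, so $H$ acts irreducibly on $U$ by Fact~\ref{fact.Irreducible}---contradicting the existence of the proper nontrivial $H$-invariant subgroup $A$. Hence $U$ is not abelian and $\pr_2 H = \mr_2 H = 2$ by Proposition~\ref{prop.Prufer}. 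Fix a Klein four-group $K = \{1,a,b,c\} \le H$ sitting inside a maximal $2$-torus. Each $C_U(x)$, for $x$ an involution, is infinite (Fact~\ref{fact.fixedPointsInvolution}, as $U$ is nonabelian), proper (the action on $X$ is faithful), hence projective by the $\pstar$-hypothesis; and Fact~\ref{fact.FourGroup} gives $U = \langle C_U(a), C_U(b), C_U(c)\rangle$.

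Next I would record that $A$ is infinite. Were $A$ finite, connectivity of $H$ would force $A \le C_U(H)$; but $C_U(H)$ is contained in $C_U(a)$ and centralized by $H(C_U(a))$, which by Proposition~\ref{prop.NHV} acts on the one-dimensional $F$-space $C_U(a)$ as the full scalar group $F^\times$ and therefore has no nonzero fixed points, so $C_U(H) = 0$ and $A = 0$, a contradiction. Now $A^\circ$ is infinite, $K$-invariant, and without involutions, so Fact~\ref{fact.FourGroup} yields $A^\circ = \langle C_{A^\circ}(x) : x \in K^*\rangle$. Because each $H(C_U(x)) \le H$ leaves $A$ invariant, Corollary~\ref{cor.RootAndChar} forces, for every involution $x \in K$, either $C_U(x) \le A$ (whence $C_U(x) \le A^\circ$ by connectedness) or $C_U(x) \cap A = 0$. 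Setting $S = \{x \in K^* : C_U(x) \le A\}$, this gives $A^\circ = \langle C_U(x) : x \in S\rangle$; moreover $S \ne \emptyset$ (otherwise $A^\circ = 0$) and $S \ne K^*$ (otherwise $A \supseteq U$, against properness). Thus $|S| \in \{1,2\}$.

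The heart of the argument---and the step I expect to be the main obstacle---is to rule out $|S| = 2$, i.e.\ to show that $A^\circ$ is a single projective root subgroup. Suppose $S = \{a,b\}$, so $C_U(c) \cap A = 0$ with $c = ab$. Then $c$ acts on $A^\circ$ with trivial fixed points, so by Fact~\ref{fact.fixedPointsInvolution} $A^\circ$ is abelian and $c$ inverts it; together with $C_U(a) \cap C_U(b) \le C_U(c) \cap A^\circ = 0$ this gives $A^\circ = C_U(a) \oplus C_U(b)$, with $a$ fixing $C_U(a)$ and inverting $C_U(b)$, and $b$ conversely. I would then exploit the normalizer structure $N_H(C_U(a)) = H(C_U(a)) * C_H(C_U(a))$ of Proposition~\ref{prop.NHV}: since $b$ inverts the projective root subgroup $C_U(a)$ it lies in this normalizer and differs from the central involution $i_a$ of $H(C_U(a))$ by an involution $c_a \in C_H(C_U(a))$; because $C_U(c_a) \supseteq C_U(a)$ is again projective, Corollary~\ref{cor.MaxRoot} pins down $C_U(c_a) = C_U(a)$, and bookkeeping the action of these involutions on the two summands produces an involution whose fixed space in $U$ contains $A^\circ$ and hence properly contains $C_U(a)$---contradicting the maximality in Corollary~\ref{cor.MaxRoot}. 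The delicate point is a residual symmetric configuration in which the Hua involutions of $C_U(a)$, $C_U(b)$, and $C_U(c)$ all coincide with $c$; here the maximality bookkeeping stalls, and I expect to need either a rank estimate in the spirit of Lemma~\ref{lem.RankBound} or a proof that $c$ is then central in $H$, which contradicts $A^\circ \cap C_U(c) = 0$ once $C_U(c)$ is recognized as the unique $H$-invariant root subgroup (Corollary~\ref{cor.OneHInvarRoot}).

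With $|S| = 1$ established, $A^\circ = C_U(a)$ is an infinite proper definable $H$-invariant projective root subgroup. Lemma~\ref{lem.HNormalRoot} then gives $H = H(A^\circ) * C^\circ_H(A^\circ)$ with $H(A^\circ) \le Z(H)$, so the unique involution $i$ of $H(A^\circ)$ is central in $H$ and, by Lemma~\ref{lem.HNormalRootOdd}, satisfies $A^\circ = C_U(i)$; uniqueness of this central involution is Corollary~\ref{cor.NilpotentHua}. Finally I would upgrade $A^\circ = C_U(i)$ to $A = C_U(i)$ by a routine connectedness argument: $H$ is connected and acts trivially on the finite set $A/A^\circ$, and since $C_U(H) = 0$ there is no room for a nontrivial finite part, forcing $A = A^\circ = C_U(i)$. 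As $C_U(i)$ is a projective root subgroup it is infinite, connected, and abelian, completing the proof.
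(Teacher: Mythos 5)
Your skeleton up to the case division is exactly the paper's: combine Fact~\ref{fact.FourGroup} with Corollary~\ref{cor.RootAndChar} to see that $A$ is generated by those $C_U(x)$, $x\in K^*$, that it contains, that this set $S$ is nonempty and proper, and then close the case $|S|=1$ via Lemmas~\ref{lem.HNormalRoot} and~\ref{lem.HNormalRootOdd} and Corollary~\ref{cor.NilpotentHua}. (Two minor remarks: the detour through $A^\circ$ is unnecessary, since Fact~\ref{fact.FourGroup} applies to $A$ itself and then exhibits $A$ as a product of the connected groups $C_U(x)$, $x \in S$; and your closing ``routine connectedness argument'' is not right as stated --- $H$ acting trivially on the finite quotient $A/A^\circ$ does not place $A\setminus A^\circ$ inside $C_U(H)$ --- but the issue evaporates once you work with $A$ directly.)

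The genuine gap is the elimination of $|S|=2$, which is where essentially all of the content of this proposition lies, and you acknowledge yourself that the involution bookkeeping stalls in the symmetric configuration. Two concrete problems. First, writing $b=i_a c_a$ with $c_a$ an involution in $C_H(C_U(a))$ presupposes that the unique involution $i_a$ of $H(C_U(a))$ inverts $C_U(a)$; but when $G(C_U(a))\cong\ssl_2(F)$ that involution generates $Z(G(C_U(a)))=C_{G(C_U(a))}(C_U(a))$ and so \emph{centralizes} $C_U(a)$, the elements of $H(C_U(a))$ inverting $C_U(a)$ have order $4$, and the complementary factor of $b$ in $C_H(C_U(a))$ need not be an involution. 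Second, even granting the bookkeeping, you produce no contradiction in the residual case, and the proposed escape routes are not available: $A$ is not known to be a root subgroup in this configuration, so Corollary~\ref{cor.OneHInvarRoot} says nothing about it. The paper closes this case by an entirely different, global argument: with $A=V_i\times V_j$ connected abelian and inverted by $k=ij$, one shows $H$ acts faithfully and definably irreducibly on $A$, that the central torus $T=H(C_U(k))$ interprets an algebraically closed field $F$ in $\emorph(A)$ over which $A$ is $2$-dimensional with $V_i$ and $V_j$ as lines, so that $H$ embeds in $\gl(A)$; then \cite[Th\'eor\`eme~4]{PoB01a} forces $H'$ to be solvable or $\ssl_2(F)$, and since $H'\le C_H(C_U(k))$ acts freely on $A$ (Corollary~\ref{cor.MaxRoot}) it contains no unipotent elements, so $H$ is connected solvable, lies in a Borel subgroup of $\gl_2(F)$, and stabilizes a line --- contradicting irreducibility. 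Some such argument identifying the field and invoking the classification of definable subgroups of $\gl_2$ appears to be needed; the local analysis of commuting involutions alone does not suffice.
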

\begin{proof}
First note that Facts~\ref{fact.Dichotomy} and \ref{fact.Irreducible} imply that $U$ is not abelian. Let $K$ be any subgroup of $H$ generated by two distinct commuting involutions. For each $x \in K^*$ set $V_x = C_U(x)$. As $U$ is not abelian, each $V_x$ is infinite. By Corollary~\ref{cor.RootAndChar}, it must be that $V_x \cap A = 0$ or $V_x \le A$ for each $x \in K^*$. Apply Fact~\ref{fact.FourGroup} to $A$ to see that $V_i \le A$ for some $i \in K^*$. Further, Fact~\ref{fact.FourGroup} applied to $U$  ensures that $V_k \cap A = 0$ for some $k \in K^*$, since $A$ is proper. Let $j$ be the remaining involution in $K$. We have that either $A = V_i$ or $A=\langle V_i,V_j \rangle$. 

If $A = V_i$, then $A$ is a root subgroup, and Lemma~\ref{lem.HNormalRootOdd} tells us that $i\in H(V_i)$. By Lemma~\ref{lem.HNormalRoot}, $i$ is central in $H$ and, hence, the unique central involution by Corollary~\ref{cor.NilpotentHua}. 

We now work to rule out the possibility that $A=\langle V_i,V_j \rangle$. Towards a contradiction, assume that $A=\langle V_i,V_j \rangle$. The first step is to show that $A$ is connected and abelian. We observed above that $V_k \cap A = 0$, so it must be that $V_i\cap V_j = 0$. Since $V_i$ and $V_j$ are root subgroups, and hence connected, Zil'ber's Indecomposability Theorem implies that $A$ is connected. Further, $k$ acts on the connected group $A$ without nontrivial fixed points, so $A$ is inverted by $k$. Thus $A = V_i \times V_j$ is connected and abelian. 

Next, we show that $H$ has an infinite center and acts faithfully and irreducibly on $A$. By Corollary~\ref{cor.MaxRoot}, $C_H(V_i) \cap C_H(V_j) = 1$, so $H$ acts faithfully on $A$. As $k$ inverts $A$, $k$ is in the center of $H$, so $V_k$ is $H$-invariant. Set $V := V_k$ and $T := H(V)$. By Lemma~\ref{lem.HNormalRoot}, $T \le Z(H)$. To see that $H$ acts irreducibly on $A$, first notice that Fact~\ref{fact.FourGroup} and Corollary~\ref{cor.RootAndChar} would force any proper nontrivial definable $H$-invariant subgroup of $A$ to be equal to $V_i$ or $V_j$. This situation would violate Corollary~\ref{cor.OneHInvarRoot}, so $H$ must act definably irreducibly on $A$. Thus, $H$ acts irreducibly on $A$ by \cite[I,~Lemma~11.3]{ABC08}.

We now appeal to \cite[I,~Proposition~ 4.11]{ABC08} to see that $T$ generates an interpretable algebraically closed field $F$ in $\emorph(A)$, $A$ is a vector space over $F$, and $H$ embeds into $\gl(A)$. Next, we show that $A$ is $2$-dimensional over $F$. Note that $V_i$ is a subspace of $A$; it is an eigenspace of $i$. Now, $H(V_i)$ is abelian, so $H(V_i)$ normalizes some $1$-dimensional subspace of $V_i$. Of course, $H(V_i)$ acts transitively on $V_i$, so it must be that $V_i$ is $1$-dimensional over $F$. The same argument works for $V_j$, so $A$ is $2$-dimensional. 

By \cite[Th\'eor\`eme~4]{PoB01a}, $H'$ is either solvable or equal to $\ssl_2(F)$. Note that $H' \le C_H(V)$ and that $C_H(V)$ must act freely on $A$ by Corollary~\ref{cor.MaxRoot}. In particular, $C_H(V)$ contains no unipotent elements, in the algebraic sense, so $H'$ must be solvable. Thus, $H$ is connected and solvable, so it is contained in a Borel subgroup of $\gl(A)$. As such, $H$ normalizes some $1$-dimensional subspace of $A$ contradicting the fact that $H$ acts irreducibly on $A$.
\end{proof}

The following corollary completes the proof of Theorem~\ref{thm.Dicotomy}; the proof will use a variant of Fact~\ref{fact.fixedPointsInvolution}.

\begin{fact}[{\cite[I,~Lemma~10.4]{ABC08}}]\label{fact.decompositionInvolution}
If a group of finite Morley rank that is without involutions has a definable involutory automorphism $\alpha$, then $\alpha$ inverts each element of some transversal for $C_G(\alpha)$ in $G$. 
\end{fact}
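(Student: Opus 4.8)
The plan is to prove the equivalent statement that \emph{every right coset of $C := C_G(\alpha)$ meets the set of inverted elements} $I := \{x \in G : \alpha(x) = x^{-1}\}$; selecting one inverted element from each coset then yields a transversal for $C$ in $G$ on which $\alpha$ acts by inversion. Fix $g \in G$ and set $u := \alpha(g)^{-1}g$. A direct computation shows $u \in I$, since $\alpha(u) = \alpha(\alpha(g)^{-1})\,\alpha(g) = g^{-1}\alpha(g) = u^{-1}$. I claim it then suffices to produce an element $x \in I$ with $x^2 = u$. Indeed, granting such an $x$, one has $\alpha(xg^{-1}) = \alpha(x)\,\alpha(g)^{-1} = x^{-1}\alpha(g)^{-1}$, and this equals $xg^{-1}$ precisely when $x^2 = \alpha(g)^{-1}g = u$; hence $xg^{-1} \in C_G(\alpha) = C$, so $x \in Cg \cap I$, as desired. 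Thus the whole problem reduces to extracting, for the inverted element $u$, a square root that is again inverted by $\alpha$.

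The heart of the argument is this square-root extraction, and it is here that the hypothesis ``$G$ has no involutions'' is used. Consider the definable hull $D := \dc(\langle u \rangle)$, a definable abelian group; it is preserved by $\alpha$ because $\alpha(\langle u\rangle) = \langle \alpha(u)\rangle = \langle u^{-1}\rangle = \langle u\rangle$ and $\alpha$ is a definable automorphism, so $\alpha(D) = \dc(\langle u\rangle) = D$. Since $G$, and hence $D$, is without involutions, $D$ has no $2$-torsion: its connected component $D^\circ$ is divisible, and $D/D^\circ$ is finite of odd order (an element of order $2$ modulo $D^\circ$ would, using divisibility of $D^\circ$ to adjust by a square root within $D^\circ$, produce a genuine involution in the abelian group $D$, a contradiction). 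Consequently squaring is a bijection of $D$, so $u$ has a \emph{unique} square root $x$ in $D$. Now $\alpha(x)$ and $x^{-1}$ are both square roots in $D$ of $\alpha(u) = u^{-1}$, so uniqueness forces $\alpha(x) = x^{-1}$, i.e.\ $x \in I$. This $x$ is exactly the inverted square root required in the first paragraph.

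Assembling the pieces: each coset $Cg$ contains the inverted element $x = x(g)$ built from $u = \alpha(g)^{-1}g$, and choosing one such element per coset produces a transversal for $C$ in $G$ inverted by $\alpha$. I expect the only delicate point---and the main obstacle---to be the passage from the purely group-theoretic hypothesis ``no involutions'' to genuine unique $2$-divisibility of $D$. This step fails for general abstract groups (for instance free groups have no involutions yet admit no square roots), and it relies essentially on the finite Morley rank structure: the definable hull of a cyclic group has divisible connected component and finite quotient, and the absence of involutions propagates to $D/D^\circ$. Everything else is formal manipulation inside the abelian group $D$, together with the coset verification that the chosen square root lands in $Cg$.
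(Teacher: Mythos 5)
The paper offers no proof of this statement: it is imported verbatim as a Fact from \cite{ABC08} (I, Lemma~10.4), so there is no internal argument to compare against. Your proof is the standard one for this lemma (and, in substance, the one in \cite{ABC08}): reduce to showing that every coset of $C_G(\alpha)$ meets the set of inverted elements, and produce such an element as a square root of the inverted element $u=\alpha(g)^{-1}g$ inside the definable hull $D=\dc(\langle u\rangle)$, where uniqueness of square roots forces the root itself to be inverted. Your coset computations, the $\alpha$-invariance of $D$, and the injectivity of squaring on $D$ are all correct.

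The one point you assert rather than prove is the surjectivity of squaring on $D$, which you derive from the claim that $D^\circ$ is divisible. That claim is true for definable hulls of cyclic groups, but it is not automatic and deserves an argument: a connected abelian group of finite Morley rank without involutions need not be divisible (an infinite elementary abelian $p$-group with $p$ odd is connected of rank $1$ but has exponent $p$), so ``connected component divisible'' is genuinely a property of hulls of cyclic groups and not of connectedness alone. The clean repair avoids $D^\circ$ entirely: by Macintyre's structure theorem the definable abelian group $D$ decomposes as $\Delta\oplus B$ with $\Delta$ divisible and $B$ of bounded exponent; since $D$ inherits ``no involutions'' from $G$, the exponent of $B$ is odd, so squaring is surjective on $B$ (multiplication by $2$ is invertible modulo an odd exponent) as well as on $\Delta$, hence on $D$. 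Alternatively, your structural claim itself follows from Macintyre's theorem combined with the minimality of the definable hull: writing $u=\delta+b$ with $\delta$ in the divisible part and $b$ of finite order, the definable subgroup generated by the divisible part and $\langle b\rangle$ contains $u$, so $D$ equals it, giving $D^\circ$ divisible with finite cyclic quotient. With either patch your proof is complete and follows the standard route.
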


\begin{corollary}\label{cor.SimpleAbelian}
Either $U$ is simple, or $U$ is abelian.
\end{corollary}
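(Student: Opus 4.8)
We must show that for an infinite proper $\pstar$-Moufang set of finite Morley rank of odd type, the root group $U$ is either simple or abelian. I have at my disposal the dichotomy machinery built up in the section: Proposition~\ref{prop.OneHInvar} says that any proper nontrivial definable $H$-invariant subgroup of $U$ is forced to be the fixed-point space $C_U(i)$ of the unique central involution $i$ of $H$, and is therefore abelian; Proposition~\ref{prop.Prufer} pins down $\pr_2 H = \mr_2 H = 2$ when $U$ is nonabelian; and Lemma~\ref{lem.RankBound} gives $\rk U \ge \rk H$ whenever such an invariant root subgroup exists. The natural plan is a proof by contradiction: assume $U$ is neither simple nor abelian, and derive a contradiction.

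**The approach.** The plan is to suppose $U$ is not abelian (so I may use all the structural results above) and not simple, and extract a contradiction. Since $U$ is not simple, it has a proper nontrivial definable normal subgroup; passing to its definable hull and intersecting over $H$-conjugates, I would first aim to produce a proper nontrivial definable \emph{$H$-invariant} subgroup $A$ of $U$. The subtlety is that a normal subgroup of $U$ need not be $H$-invariant, so I expect to build $A$ as $\bigcap_{h} N^h$ or as an appropriate characteristic subgroup (e.g.\ a term of the derived/descending central series, or the subgroup generated by the $H$-orbit of $N$), exploiting that $H$ normalizes such characteristic/orbit-closure constructions. Once I have such an $A$, Proposition~\ref{prop.OneHInvar} forces $A = C_U(i)$ for the unique central involution $i$, so $A$ is abelian, connected, infinite, and is a root subgroup. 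Now $U$ carries both a proper $H$-invariant subgroup and the involution $i$ acting on it; I would invoke Fact~\ref{fact.decompositionInvolution} together with Lemma~\ref{lem.HNormalRootOdd}, using that $i$ inverts a transversal of $C_U(i) = A$ in $U$, to pin down the relationship between $U$ and $A$ and contradict either the nonabelian hypothesis or the rank bound $\rk U \ge \rk H$.

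**The main obstacle.** I expect the hard part to be manufacturing a genuinely \emph{$H$-invariant} proper subgroup from the mere failure of simplicity: a normal subgroup $N \triangleleft U$ is a priori only $N_G(U)_{\infty}$-invariant to the extent $H$ normalizes it, and $H$ need not normalize $N$. The cleanest route is probably to show that $U$ nonabelian and non-simple yields an $H$-invariant candidate among characteristic subgroups --- the commutator subgroup $U'$, the center $Z(U)$, or the last nontrivial term of the lower central series --- each of which is definable and automatically invariant under every automorphism of $U$ induced by $H$. If one of these is proper and nontrivial we are done by feeding it to Proposition~\ref{prop.OneHInvar}; the work is to verify that at least one must be proper and nontrivial, handling the degenerate possibilities (e.g.\ $U$ perfect with trivial center but not simple) by a separate argument that again produces an $H$-invariant subgroup or directly contradicts odd type via Corollary~\ref{cor.UHstruct} and Fact~\ref{fact.FourGroup}.

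**Closing the contradiction.** Once the $H$-invariant abelian root subgroup $A = C_U(i)$ is in hand, the endgame should mirror the techniques already used: the central involution $i$ inverts $A$ is false (it centralizes $A$), so some other involution of the Klein $4$-group in $H$ (available by Proposition~\ref{prop.Prufer}) acts nontrivially, and Fact~\ref{fact.FourGroup} applied to $U$ decomposes $U$ in terms of centralizers of the involutions, forcing $U$ to be generated by abelian root subgroups on which the involutions act by inversion. I would then argue, as in the proof of Proposition~\ref{prop.OneHInvar}, that $U$ becomes abelian (a contradiction with the standing assumption) --- the involutions inverting complementary pieces force commutativity --- thereby eliminating the non-simple nonabelian case entirely and completing the proof.
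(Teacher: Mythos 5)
Your first half tracks the paper's strategy: the paper also splits into the case where $U$ is definably characteristically simple (handled via the generalized Fitting subgroup --- $F^*(U)=U$, the components are $H$-invariant since $H$ is connected, Proposition~\ref{prop.OneHInvar} allows only one component, so $U$ is quasisimple and hence simple) and the case where a proper nontrivial definable characteristic subgroup $V$ exists, which Proposition~\ref{prop.OneHInvar} identifies as $C_U(k)$ for the central involution $k$. You correctly flag that the ``perfect with trivial center but not simple'' possibility needs a separate argument, though you do not supply the $F^*(U)$/components argument that actually closes it.

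The genuine gap is your endgame. You claim that once $A=C_U(i)$ is in hand, Fact~\ref{fact.FourGroup} decomposes $U$ into centralizers of involutions and ``the involutions inverting complementary pieces force commutativity.'' That mechanism does not work: $U=\langle C_U(x):x\in K^*\rangle$ with each $C_U(x)$ abelian does not make $U$ abelian, and nothing in the four-group decomposition forces elements of distinct $V_x$'s to commute. The existence of a proper nontrivial $H$-invariant subgroup is \emph{not} by itself contradictory, and the contradiction the paper extracts is substantially harder than anything in Proposition~\ref{prop.OneHInvar}: one shows $k$ inverts the connected quotient $\Ubar=U/V$ (Fact~\ref{fact.decompositionInvolution}), so $\Ubar$ is abelian; one checks $V_i$ and $V_j$ meet $V$ trivially and embed in $\Ubar$, giving $\Ubar=\modu{V_i}\times\modu{V_j}$ with $H$ acting irreducibly; the torus $T=H(V)$ generates an interpretable field $F$ making $\Ubar$ a $2$-dimensional $F$-vector space; and finally the rank bound of Lemma~\ref{lem.RankBound} gives $\rk C_H(V)\le 2\rk F$, which by Poizat's theorem rules out $\modu{H'}\cong\ssl_2(F)$, so $\modu{H}$ is solvable and cannot act irreducibly on a $2$-dimensional space. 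None of this quotient/linear-algebra/solvability argument --- the actual heart of the proof --- appears in your proposal, and the shortcut you propose in its place would fail.
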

\begin{proof}
Assume that $U$ is not abelian. We first treat the case when $U$ is definably characteristically simple. Let $F^*(U)$ be the generalized Fitting subgroup of $U$. As the Fitting subgroup must have a trivial center, $U$ embeds into $\aut(F^*(U))$. Thus, it cannot be that $F^*(U) = 0$, so $F^*(U) = U$. Since $U$ is not abelian, $U$ equals the layer of $U$. As $H$ is connected, it acts trivially on the finite set of components of $U$, so each component is $H$-invariant. By Proposition~\ref{prop.OneHInvar}, there cannot be more than one component. Thus, $U$ is quasisimple, but then it must be that $U$ is simple.

To complete the proof, we show that $U$ must be definably characteristically simple. Suppose not, and let $V$ be a proper nontrivial definable characteristic subgroup of $U$. Then $H$ has a unique central involution $k$, and $V = C_U(k)$. By Fact~\ref{fact.decompositionInvolution}, there is a transversal for $U/V$ consisting of elements inverted by $k$, so $k$ inverts $U/V$. Also, $U/V$ is connected since $U$ is connected.

We now derive a contradiction in much the same way as in Proposition~\ref{prop.OneHInvar} except that we now must work with the quotient $U/V$. Set $T := H(V)$. Then, $H = T*C_H^\circ(V)$ with $T \le Z(H)$. Let $K$ be the group generated by $k$ and any other involution of $H$ (using Proposition~\ref{prop.Prufer}). Choose $i \in K \setdiff \langle k \rangle$, and set $j = ik$. Also, let $V_i = C_U(i)$ and $V_j = C_U(j)$. Observe that $V_i$ and $V_j$ are each different from $V$ and $T$-invariant, so Corollary~\ref{cor.RootAndChar} shows that both intersect $V$ trivially. Thus they embed into  $U/V$.

We use bar notation to denote passage to the quotient $\Ubar := U/V$. Set $M := C_H(\Ubar)$; we also use bar notation for the quotient $\Hbar:=H/M$. Recall that $\Ubar$ is abelian. Now, by Fact~\ref{fact.FourGroup}, $\Ubar = \modu{V_i}\cdot\modu{V_j}$.  Since $i$ fixes $V_i$ and inverts $V_j$, it must be that $\modu{V_i}\cap\modu{V_j}$ is trivial as otherwise $\Ubar$, and hence $U$, would have involutions, see \cite[I,~Proposition~2.18]{ABC08}. Thus, $\Ubar = \modu{V_i}\times\modu{V_j}$. As Proposition~\ref{prop.OneHInvar} implies that $V$ is the only proper nontrivial definable $H$-invariant subgroup of $U$, $H$ must act definably irreducibly on $A$. Thus, $H$ acts irreducibly on $A$ by \cite[I,~Lemma~11.3]{ABC08}. Also, $T$ acts freely on $V_i$, so $T$ embeds into $\Hbar$. Thus, $T$ generates an interpretable algebraically closed field $F$ in $\emorph(\Ubar)$, and $\modu{H}$ embeds into $\gl(\Ubar)$. Further, we find, as in Proposition~\ref{prop.OneHInvar}, that $\Ubar$ is $2$-dimensional over $F$.

We now show that $\modu{H}$ is solvable, and here the approach differs slightly from the proof of Proposition~\ref{prop.OneHInvar}. By \cite[Th\'eor\`eme~4]{PoB01a}, $\modu{H}' = \modu{H'}$ is either solvable or equal to $\ssl_2(F)$. We claim that $\modu{H'}$ cannot be $\ssl_2(F)$ because it is too small. First recall that $H' \le C_H(V)$. Now, let $f$ be the rank of $F$. By Lemma~\ref{lem.RankBound}, $\rk H \le \rk U$. Now, $\rk H = \rk T + \rk C_H(V)$ and $\rk U = \rk V + 2f$. Since $\rk T = \rk V$, we have that $\rk C_H(V) \le 2f$. Thus $\rk \modu{H'} \le 2f$, so $\modu{H'}$ cannot be $\ssl_2(F)$. Thus, $\modu{H}$ is solvable. As in the proof of Proposition~\ref{prop.OneHInvar}, this yields a contradiction.
\end{proof}

\section{Proofs of the main theorems}\label{sec.Theorem}
The main theorems  follow rather quickly from Theorem~\ref{thm.Dicotomy}. 

\begin{proof}[Proof of Theorem~\ref{thm.A}]
Suppose that Theorem~\ref{thm.A} is false, and let $\Mouf = \mut$ be a counterexample. By Lemma~\ref{lem.pstarExists}, there is an infinite definable root subgroup $V\le U$ that induces a proper $\pstar$-Moufang set $\Mouf'$ that is also not projective. However, the induced Moufang set also satisfies the hypotheses of the theorem, and this contradicts Theorem~\ref{thm.Dicotomy} or, more specifically, Corollary~\ref{cor.NilpotentHua}.
\end{proof}

For the proof of Theorem~\ref{thm.B}, we begin with a lemma.

\begin{lemma}
Let $\Mouf$ be an infinite \hp{} Moufang set of finite Morley rank of odd type. Assume that the Hua subgroups are $L$-groups. Then \emph{either} of the following imply that $\mut \cong \Mouf(\fieldF)$ for $\fieldF$ an algebraically closed field:
\begin{enumerate}
\item the root groups are solvable, or
\item the Hua subgroup has Pr\"ufer $2$-rank at most $1$.
\end{enumerate}
\end{lemma}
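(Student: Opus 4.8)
The plan is to argue by contradiction and reduce everything to the dichotomy of Theorem~\ref{thm.Dicotomy}. Suppose $\Mouf$ is not projective. Since $\Mouf$ is \hp{}, Lemma~\ref{lem.pstarExists} produces an infinite definable root subgroup $V \le U$ inducing a proper $\pstar$-Moufang set $\Mouf'$ that is again not projective. The little projective group of $\Mouf'$ is $G(V)/C_{G(V)}(V)$, a definable section of $G$; being $2$-transitive it has infinite Sylow $2$-subgroups, so---exactly as in the proof of Corollary~\ref{cor.UHstruct}---it is of odd type. Thus $\Mouf'$ satisfies the hypotheses of Theorem~\ref{thm.Dicotomy}, and its root groups, which are copies of $V$, are either abelian or simple.

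I would then rule out the simple alternative under each hypothesis of the lemma. For hypothesis~(1), if the root groups of $\Mouf$ are solvable then $U$ is solvable, hence so is its subgroup $V$; an infinite solvable group cannot be simple, so Theorem~\ref{thm.Dicotomy} forces $V$ to be abelian. For hypothesis~(2), note that the Hua subgroup $H'$ of $\Mouf'$ is a quotient of $H(V) \le H$, hence a definable section of $H$. Since the Pr\"ufer $2$-rank does not increase when one passes to a definable subgroup or to a definable quotient (the image of a maximal $2$-torus is a maximal $2$-torus of the quotient), we get $\pr_2 H' \le \pr_2 H \le 1$. But the final clause of Theorem~\ref{thm.Dicotomy} gives $\pr_2 H' = 2$ as soon as the root groups of $\Mouf'$ are simple; this contradiction again leaves only the abelian case.

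In either case $\Mouf'$ is an infinite proper Moufang set of finite Morley rank of odd type with abelian root groups. Its Hua subgroup $H'$ is a definable section of the $L$-group $H$, and the class of $L$-groups is closed under definable sections (a definable section of a definable section is again a definable section), so $H'$ is itself an $L$-group. Hence the abelian-root-group results, namely \cite{WiJ11} (via Fact~\ref{fact.Dichotomy}, supplemented by \cite{DMeTe08}) whose use of the $L$-hypothesis is now justified, show that $\Mouf'$ is projective. This contradicts the choice of $\Mouf'$, so $\Mouf$ was projective after all.

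I expect the main obstacle to be organizational rather than deep: carefully transferring the data from $\Mouf$ to the induced $\Mouf'$. Specifically, one must confirm that the induced little projective group is genuinely of odd type, that $H'$ is an honest definable section of $H$ so that both the $L$-property and the Pr\"ufer-rank bound descend, and that the push-forward of maximal $2$-tori indeed prevents $\pr_2$ from increasing under the quotient appearing in step~(2). Once this bookkeeping is in place, the two hypotheses funnel into the single abelian-root-group case already settled in the literature.
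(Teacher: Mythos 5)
Your proposal is correct and follows essentially the same route as the paper: pass to a minimal-rank root subgroup via Lemma~\ref{lem.pstarExists} to obtain a nonprojective proper $\pstar$-Moufang set, observe that it inherits the hypotheses (solvability of the root groups, or the Pr\"ufer $2$-rank bound via the fact that the induced Hua subgroup is a definable section of $H$), invoke Theorem~\ref{thm.Dicotomy} to force the root groups to be abelian, and then conclude with Fact~\ref{fact.Dichotomy} together with \cite{DMeTe08} and \cite{WiJ11}. The extra bookkeeping you flag (odd type of the induced little projective group, descent of the $L$-property and of $\pr_2$ to definable sections) is exactly what the paper's terser phrasing leaves implicit, and it all goes through.
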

\begin{proof}
Suppose that the lemma is false. Let $\Mouf = \mut$ be a nonprojective Moufang set satisfying the hypotheses of the lemma and one of the final two conditions. As before, we may use Lemma~\ref{lem.pstarExists} to find an infinite definable root subgroup $V\le U$ that induces a proper $\pstar$-Moufang set $\Mouf'$ that is not projective. The induced Moufang set also satisfies the hypotheses of the lemma and one of the final two conditions, so Theorem~\ref{thm.Dicotomy} implies that $V$ is abelian. By Fact~\ref{fact.Dichotomy}, $\Mouf'$ is also special. Now we may appeal to  \cite[Theorem~2.1]{DMeTe08} when $V$ is torsion free and \cite[Theorem~3.1]{WiJ11} when $V$ is elementary abelian, the latter requiring the $L$-hypothesis, to see that $\Mouf'$ is projective, which is a contradiction.  
\end{proof}

\begin{proof}[Proof of Theorem~\ref{thm.B}]
Suppose that the theorem is false. By the previous lemma, there must exist an infinite \hp{} Moufang set of finite Morley rank of odd type whose Hua subgroups are $L$-groups and whose root groups are not solvable but also not simple. Let $\Mouf = \mut$ be a Moufang set satisfying these hypotheses whose root groups have minimal rank among all such Moufang sets. Notice that the previous lemma also implies that the Hua subgroup, $H$, has distinct commuting involutions $i$ and $j$. In what follows, note that we are \emph{not} in a $\pstar$-setting. However, we do know, by Corollary~\ref{cor.UHstruct}, that $U$ has no involutions.

We first claim that $U$ has a proper nontrivial definable normal subgroup that is $H$-invariant. If not, we would find that $U$ is equal to its generalized Fitting subgroup. Then, as $U$ and $H$ are connected, each component of $U$ would be normalized by $U$ and $H$, and we would find that $U$ is quasisimple, and hence simple. This would contradict the nonsimplicity of the root groups, so it must be that $U$  has a proper nontrivial definable normal $H$-invariant subgroup $A$. 

Let $K := \langle i,j \rangle$. For each $x\in K^*$, set $V_x:=C_U(x)$. We now claim that for each $x\in K^*$ either $V_x \cap A$ is  trivial or equal to $V_x$. Towards a contradiction assume that $A_x := V_x\cap A$ is a proper nontrivial subgroup of $V_x$. Let $\Mouf_x$ be the Moufang set induced by $V_x$. Then $\Mouf_x$ is also an infinite \hp{} Moufang set of finite Morley rank of odd type whose Hua subgroups are $L$-groups. Further, $A_x$ is normalized by the induced Hua subgroup, so $\Mouf_x$ is certainly not projective. By the previous lemma, $V_x$ is not solvable, and the presence of $A_x$ shows that $V_x$ is not simple. We conclude that $\Mouf_x$ violates the minimality of $\Mouf$, so it must be that $V_x \cap A$ is either trivial or all of $V_x$. Further, we can use this observation to see that $V_x \le A$ for some $x\in K^*$. Indeed, if $V_i$ and $V_j$ intersect $A$ trivially, then both $i$ and $j$ invert $A$, so in this case, $k=ij$ would centralize $A$. Thus, $V_x \cap A$ is nontrivial for some $x\in K^*$, so $V_x \le A$ for some $x\in K^*$. Fix $x\in K^*$ with $V_x \le A$.

Now, if $A$ is finite, then $x$  acts on the connected group $U$ with finitely many fixed points, so $x$ inverts $U$. This is a contradiction, so $A$ must be infinite. Since $V_x \le A$, we find that $x$ inverts $U/A$ by Fact~\ref{fact.decompositionInvolution}, so $U/A$ is abelian. We now show that $A$ is also abelian. Applying Fact~\ref{fact.FourGroup} to $U$ it must be that $V_y$ is not contained in $A$ for some $y\in K^*$. As before, this means that $V_y \cap A$ is trivial, so we may again use Fact~\ref{fact.decompositionInvolution} to see that $y$ inverts $A$. Thus, $A$ is abelian, and this contradicts the nonsolvability of $U$.
\end{proof}

\section*{Acknowledgments}
The author would like to thank Katrin Tent and Tuna Alt{\i}nel for several valuable suggestions and comments that lead to a refinement of both the content and the presentation of the paper. The author is also quite grateful to the referee for a very careful reading of the article and many helpful recommendations.

\bibliographystyle{alpha}
\bibliography{WisconsBib}

\newcommand{\noopsort}[1]{}\def\rasp{\leavevmode\raise.45ex\hbox{$\rhook$}}
  \def\cprime{$'$}
\begin{thebibliography}{KTVM99}

\bibitem[AB08]{AlBu08}
Tuna Alt{\i}nel and Jeffrey Burdges.
\newblock On analogies between algebraic groups and groups of finite {M}orley
  rank.
\newblock {\em J. Lond. Math. Soc. (2)}, 78(1):213--232, 2008.

\bibitem[ABC08]{ABC08}
Tuna Alt{\i}nel, Alexandre~V. Borovik, and Gregory Cherlin.
\newblock {\em Simple groups of finite {M}orley rank}, volume 145 of {\em
  Mathematical Surveys and Monographs}.
\newblock American Mathematical Society, Providence, RI, 2008.

\bibitem[BBC07]{BBC07}
Alexandre Borovik, Jeffrey Burdges, and Gregory Cherlin.
\newblock Involutions in groups of finite {M}orley rank of degenerate type.
\newblock {\em Selecta Math. (N.S.)}, 13(1):1--22, 2007.

\bibitem[BC08]{BoCh08}
Alexandre Borovik and Gregory Cherlin.
\newblock Permutation groups of finite {M}orley rank.
\newblock In {\em Model theory with applications to algebra and analysis.
  {V}ol. 2}, volume 350 of {\em London Math. Soc. Lecture Note Ser.}, pages
  59--124. Cambridge Univ. Press, Cambridge, 2008.

\bibitem[BC09]{BuCh08}
Jeffrey Burdges and Gregory Cherlin.
\newblock Semisimple torsion in groups of finite {M}orley rank.
\newblock {\em J. Math. Log.}, 9(2):183--200, 2009.

\bibitem[BN94]{BoNe94}
Alexandre Borovik and Ali Nesin.
\newblock {\em Groups of finite {M}orley rank}, volume~26 of {\em Oxford Logic
  Guides}.
\newblock The Clarendon Press, Oxford University Press, New York, 1994.
\newblock Oxford Science Publications.

\bibitem[DMS09]{DMeSe09}
Tom De~Medts and Yoav Segev.
\newblock A course on {M}oufang sets.
\newblock {\em Innov. Incidence Geom.}, 9:79--122, 2009.

\bibitem[DMST08]{DST08}
Tom De~Medts, Yoav Segev, and Katrin Tent.
\newblock Special {M}oufang sets, their root groups and their {$\mu$}-maps.
\newblock {\em Proc. Lond. Math. Soc. (3)}, 96(3):767--791, 2008.

\bibitem[DMT08]{DMeTe08}
Tom De~Medts and Katrin Tent.
\newblock Special abelian {M}oufang sets of finite {M}orley rank.
\newblock {\em J. Group Theory}, 11(5):645--655, 2008.

\bibitem[DMW06]{DMeWe06}
Tom De~Medts and Richard~M. Weiss.
\newblock Moufang sets and {J}ordan division algebras.
\newblock {\em Math. Ann.}, 335(2):415--433, 2006.

\bibitem[HKS72]{HKS72}
Christoph Hering, William~M. Kantor, and Gary~M. Seitz.
\newblock Finite groups with a split {$BN$}-pair of rank {$1$}. {I}.
\newblock {\em J. Algebra}, 20:435--475, 1972.

\bibitem[KTVM99]{KTVM99}
Linus Kramer, Katrin Tent, and Hendrik Van~Maldeghem.
\newblock Simple groups of finite {M}orley rank and {T}its buildings.
\newblock {\em Israel J. Math.}, 109:189--224, 1999.

\bibitem[Poi87]{PoB87}
Bruno Poizat.
\newblock {\em Groupes stables}.
\newblock Nur al-Mantiq wal-Ma\rasp rifah [Light of Logic and Knowledge], 2.
  Bruno Poizat, Lyon, 1987.
\newblock Une tentative de conciliation entre la g{\'e}om{\'e}trie
  alg{\'e}brique et la logique math{\'e}matique. [An attempt at reconciling
  algebraic geometry and mathematical logic].

\bibitem[Poi01]{PoB01a}
Bruno Poizat.
\newblock Quelques modestes remarques \`a propos d'une cons\'equence inattendue
  d'un r\'esultat surprenant de {M}onsieur {F}rank {O}laf {W}agner.
\newblock {\em J. Symbolic Logic}, 66(4):1637--1646, 2001.

\bibitem[Pra12]{PrG11}
Gopal Prasad.
\newblock Weakly-split spherical {T}its systems in quasi-reductive groups.
\newblock {\em Amer. J. Math.}, 2012.
\newblock Accepted for publication.

\bibitem[Seg08]{SeY08}
Yoav Segev.
\newblock Finite special {M}oufang sets of odd characteristic.
\newblock {\em Commun. Contemp. Math.}, 10(3):455--475, 2008.

\bibitem[Seg09]{SeY09}
Yoav Segev.
\newblock Proper {M}oufang sets with abelian root groups are special.
\newblock {\em J. Amer. Math. Soc.}, 22(3):889--908, 2009.

\bibitem[SW08]{SeWe08}
Yoav Segev and Richard~M. Weiss.
\newblock On the action of the {H}ua subgroups in special {M}oufang sets.
\newblock {\em Math. Proc. Cambridge Philos. Soc.}, 144(1):77--84, 2008.

\bibitem[Ten02]{TeK02}
Katrin Tent.
\newblock {$BN$}-pairs and groups of finite {M}orley rank.
\newblock In {\em Tits buildings and the model theory of groups ({W}\"urzburg,
  2000)}, volume 291 of {\em London Math. Soc. Lecture Note Ser.}, pages
  173--183. Cambridge Univ. Press, Cambridge, 2002.

\bibitem[Tit92]{TiJ92}
Jacques Tits.
\newblock Twin buildings and groups of {K}ac-{M}oody type.
\newblock In {\em Groups, combinatorics \& geometry ({D}urham, 1990)}, volume
  165 of {\em London Math. Soc. Lecture Note Ser.}, pages 249--286. Cambridge
  Univ. Press, Cambridge, 1992.

\bibitem[Wis10]{WiJ10}
Josh Wiscons.
\newblock Special abelian {M}oufang sets of finite {M}orley rank in
  characteristic 2.
\newblock {\em J. Group Theory}, 13(1):71--82, 2010.

\bibitem[Wis11]{WiJ11}
Josh Wiscons.
\newblock On groups of finite {M}orley rank with a split {$BN$}-pair of rank
  {$1$}.
\newblock {\em J. Algebra}, 330(1):431--447, 2011.

\end{thebibliography}
\end{document}